\newtheorem{theorem}{\noindent Theorem}
\newtheorem{lemma}{\noindent Lemma}
\newtheorem{definition}{\noindent Definition}
\newtheorem{corollary}{\noindent Corollary}
\newtheorem{conjecture}{\noindent Conjecture}
\newtheorem{statement}{\noindent Proposition}
\newtheorem{remark}{\noindent Remark}
\newcommand{\eps}{\varepsilon}
\newcommand{\dm}[2]{{\rm diam}_{#1}(#2)}
\newcommand{\esdm}[2]{{\rm essdiam}_{#1}(#2)}
\newcommand{\Ent}[2]{\mathbb{H}_{#1}(#2)}
\newcommand{\EntK}[2]{\mathbb{H}_{#1}^K(#2)}
\newcommand{\mnorm}[1]{\|#1\|_{m}}
\newcommand{\Lnorm}[1]{\|#1\|_{_{L^1(X^2)}}}
\def\tr{{\rm tr}\,}
\date{}
\author{A.~M.Vershik, P.~B.~Zatitskiy, F.~V.~Petrov}
\title{Geometry and dynamics of admissible metrics in measure spaces}
\begin{document}
\setcounter{section}{-1}
\maketitle

\begin{abstract}
We study a wide class of metrics in a Lebesgue space, namely the
class of so-called admissible metrics. We consider the cone of
admissible metrics, introduce a special norm in it, prove
compactness criteria, define the $\eps$-entropy of a measure space
with an admissible metric, etc. These notions and related results
are applied to the theory of transformations with invariant measure;
namely, we study the asymptotic properties of orbits in the cone of
admissible metrics with respect to a given transformation or a group
of transformations. The main result of this paper is a new
discreteness criterion for the spectrum of an ergodic
transformation: we prove that the spectrum is discrete if and only
if the $\eps$-entropy of the averages of some (and hence any)
admissible metric  over  its trajectory is uniformly bounded.

\end{abstract}

\tableofcontents

\section{Introduction}

This paper contains a number of results obtained in the framework of
the program outlined by the first author in
\cite{Ur, Mark, St.P} and concerning the asymptotic dynamics of
metrics in measure spaces and its applications to ergodic theory.
In the first chapter, we study the space of so-called admissible
metrics on a standard measure space; then, in the second chapter, we
use the developed machinery to characterize systems with discrete
spectrum in terms of scaling entropy. The main idea of our approach is as
follows. Consider an action of a countable group $G$ of measurable
transformations in a standard (Lebesgue) space $(X,\mu)$
with a continuous measure and assume that we are given a measurable
(regarded as a function of two variables) metric or semimetric
$\rho$ such that the corresponding metric space
structure on $X$ agrees with the measure
space structure (such a metric is called admissible,
see below). We iterate the
metric using the transformation group $G$ and consider
the averages of these
iterations over finite subsets of $G$ chosen in a special way
(for instance, over F{\o}lner sets in amenable groups):
$$\rho_n^{av}(x,y)=\frac{1}{\#A_n}\sum_{g\in A_n}\rho(gx,gy).$$
For the group ${\mathbb Z}$ with an
automorphism $T$ as a generator, we
have
$$\rho_n^{av}(x,y)=\frac{1}{n}\sum_{i=0}^{n-1}\rho(T^i x,T^i y).
$$

We suggest to study the asymptotic behavior (as
$n\rightarrow \infty$) of this sequence of metrics and its invariants,
and to find those invariants that do not depend on the choice of the
initial metric. The first example of such an invariant is the
$\eps$-entropy of the corresponding metric measure space, more
exactly, the scaling entropy. A general principle, which we justify in this
paper in the simplest case of a discrete spectrum action of an Abelean
group (in particular, $\mathbb{Z}$), is that
these asymptotic characteristics do not depend on the choice of the
initial (semi)metric $\rho$, at least for a wide range of
metrics, and thus are
ergodic invariants of the action. Most probably, this is also true in
many other cases.

Of course, the limit mentioned above does exist almost everywhere by the ergodic theorem
(applied to the square of the action on the space
$X\times X$) and is an invariant metric. But in most interesting
cases, namely, when the orthogonal complement to the constants
has no discrete spectrum, this limit metric is constant almost
everywhere, so that it determines the discrete topology on $X$ and hence is
not admissible in our sense. However, we will be interested not in the
limit itself, but in the asymptotic behavior of the average metrics.

The relation between scaling and classical entropies is easy to explain.
The Kolmogorov entropy of an automorphism $T$ in Sinai's definition is the
limit of the entropy of the product of $n$
rotations of a generating
partition for $T$ normalized by $n$. If this entropy vanishes, no change of
the normalization would give a new invariant. In our approach, we
suggest to consider the normalized limits of the {\it $\eps$-entropy}; and
the difference with the classical approach
is that we consider not the product of partitions,
but the average metric. This allows us to define the
asymptotics of the $\eps$-entropy also in the
case where the Kolmogorov entropy vanishes. The corresponding growth
(in $n$, for small $\eps$) is determined by the so-called scaling
sequence, and if the numerical limit does exist, then it is called the
{\it scaling entropy}, see \cite{usp}. As a very special case, this
notion includes also topological entropy. Some nontrivial examples
for actions of groups of the form $\sum_1^{\infty} {\Bbb Z}_2$ were
studied earlier (see \cite{usp,VGor})\footnote{In those papers, the
problem arose in connection with the theory of filtrations and the
pasts of Markov processes.}.

Similar suggestions, in different contexts and different generality, were
studied earlier. Feldman
(\cite{Fel}; see also the later papers \cite{Kat} and especially \cite{Fer},
where this problem is considered
from the point of view of complexity theory)
perceived the role of $\eps$-entropy (without using this term). An important difference
of our suggestion from all these papers is that instead of the theory
of measurable partitions (i.e., discrete semimetrics) we use the
theory of general admissible semimetrics and consider the
operation of averaging metrics, which has no simple interpretation
in terms of partitions (see the formula above). Averaging is much
more natural for ergodic theory than taking the maximum of
metrics. In another context, this operation was used in
\cite{OW}. Considering scaling sequences for the $\eps$-entropy
of automorphisms will make it possible
to classify the ``measure of chaoticity''
--- from the absence of growth (in the
case of discrete spectrum) up to linear growth (in the case of
positive Kolmogorov entropy). In between there must be classes
of automorphisms with zero Kolmogorov entropy but different scaling
entropy. In more traditional (probabilistic) terms, one might say that
we suggest to consider the asymptotics of sequences of Hamming-like
metrics in the space of realizations of a stationary random process.

Questions about more involved geometric invariants of sequences of
metrics apparently were not even posed. What is the difference
between the sequence of average metrics on a measure space constructed
from a Bernoulli automorphism and that constructed from a
non-Bernoulli $K$-automorphism? The growth of the $\eps$-entropy (the
scaling sequence) in these cases is the same; therefore, to
distinguish between them, one need to consider invariants not of a
single metric, but of several consecutive metrics.

To formulate very briefly the idea of the approach to ergodic theory suggested in
\cite{Ur, Mark, St.P}, it is to study {\it random stationary sequences
of admissible metrics on a given measure space and their asymptotic
invariants}, in contrast to the traditional probabilistic
interpretation of this theory as the study of stationary sequences of
random variables. It is quite obvious that the information
on the shifts contained in metrics is easier to extract than
that contained in functions of one variable, and
this allows one to hope for a simplification of the whole theory.

The results presented in the first chapter of the paper
is devoted to preliminary considerations, namely, to the study
of admissible metrics on a measure space. On the one hand,
Gromov's remarkable work (see \cite{Riem}) initiated a systematic study of
so-called $mm$-spaces (which in \cite{Ur} were called Gromov triples, or metric triples).
The most important fact here is the
reconstruction, or classification, theorem of Gromov and Vershik, about
a complete system of invariants of nondegenerate $mm$-spaces (see
\cite{Riem,umn04} and below), which is a particular case of the
classification theorem for measurable functions of several variables
\cite{Clas}. On the other hand, starting from the first author's papers
\cite{Ur,ran}, the following point of view on $mm$-spaces is suggested: in
contrast to the classical approach, where one fixes a topological
space (for instance, a metric compact space) and considers various
Borel measures on it, here, on the contrary, one fixes a
$\sigma$-algebra and a measure and varies {\it admissible metrics} on
this measure space. It is interesting that within this approach,
even the notion of a (semi)metric
needs to be slightly modified (fortunately, in a harmless way:
``an almost metric is a metric''). We consider in detail several
equivalent definitions of an admissible metric, which
are heavily used in what follows and underlie the whole approach. The
admissibility of a metric on a measure space means merely that it is
measurable and separable. The original measure is Borel with respect to
any admissible metric, and the completion of the original space with respect to
an admissible metric is a Polish space with a nondegenerate Borel
measure. There are many reformulations of the notion of admissibility,
including those involving matrix distributions, projective limits, etc. We
consider summable metrics; the space (cone) of admissible metrics lies
in $L^1(X\times X,\mu \times \mu)$ and is equipped with a special norm
(called the m-norm). The convergence in this norm is a
``convergence with a regulator,''
which appears in the theory of partially ordered Banach spaces.
We prove a number of properties of this norm and an important
 compactness criterion for a family of metrics in this norm,
which is a generalization of the Kolmogorov--Riesz compactness
criterion for $L^1$. In one of the sections we discuss
how an admissible metric can be restricted to the elements of a
measurable partition. This question is related to a serious problem
about the correctness of the restriction of a measurable function of
two or more variables to a subset of smaller dimension.

The main result of this paper (the second chapter) illustrates this idea; namely, it
says that {\it for an action of $\Bbb Z$ (and discrete Abelean
groups), the spectrum is discrete if and only if for some (and
hence any) admissible metric, the $\eps$-entropy of its averages is
bounded}. This criterion does not require explicit calculation of the
spectrum or even (as in Kushnirenko's criterion; see the last section)
enumeration of the asymptotics of all possible sequences of entropies,
etc. It suffices to perform calculations only for one admissible metric.
A similar result in a more special situation was obtained by another
method in \cite{Fer, FK}.

In the last section, we discuss relations of our results with the characterization of
discrete spectrum systems in terms of Kirillov--Kushnirenko A-entropy
 (or sequential entropy) \cite{Kush} and Kushnirenko's
compactness criterion for a set of partitions.
The difference between our
approaches is that we consider the $\eps$-entropy of the averages of
consecutive iterations of a metric rather than the normalized entropy
of the supremum over subsequences of partitions, as in
\cite{Kush}. We formulate several open problems and conjectures.

\medskip
The paper is supported by the Russian Federation Government grant
11.G34.31.0026 and the RFBR grants
11-01-12092-ofi-m and
11-01-00677-a, as well as by the Chebyshev Laboratory of the
St.~Petersburg State University.
The authors express their deep gratitude to N.Tsilevich for the translation
of the paper and for useful remarks.

\section{The geometry of admissible metrics}

\subsection{Definitions of admissible metrics on measure spaces}

Let $(X,\mu)$ be a Lebesgue space. We will be
mainly interested in spaces with a normalized (i.e., such that $\mu(X)=1$) continuous positive
measure, but all definitions apply to an arbitrary Lebesgue space,
in which the measure may contain atoms.

\begin{definition}
A metric or semimetric $\rho$ on the space $X$ is called
\emph{admissible}
if it is measurable, regarded as a function of two variables, on the
Lebesgue space $(X\times X, \mu \times \mu)$ and there exists a subset
$X_1\subset X$ of full measure such that the semimetric space
$(X_1,\rho)$ is separable.
\end{definition}

In other terms the separability condition is equivalent to
the requirement that measure $\mu$
is a Radon (or $\sigma$-compact) Borel measure w.r.t. (semi)metric $\rho$.

Since semimetrics play an essential role in our considerations, we use
basic notions of the theory of metrics in the case of semimetrics,
too. For example, speaking about the Borel $\sigma$-algebra of sets
in the case of a semimetric space, we mean the $\sigma$-algebra generated by
the open (in the sense of the semimetric in consideration) sets. Of course, this
$\sigma$-algebra does not in general separate points. One can easily
see that if $\rho$ is an admissible
metric (resp. semimatric) in a space $(X,\mu)$, then the measure $\mu$ is Borel with respect to
$\rho$, and the completion of appropriate subset $X_1\subset X$ of full measure
with respect to $\rho$ is a complete separable metric (= Polish) space (resp. complete separable semimatric
space) in which the
measure $\mu$ is nondegenerate (nonempty open sets have positive
measure).
%

An important class of admissible metrics is that of {\it
block semimetrics}. Let
$\xi$ be a partition of the space
$(X,\mu)$ into finitely or countably many measurable sets
$X_i$, $i=1,2, \dots$; the block semimetric $\rho_\xi$ corresponding
to $\xi$ is
defined as follows:
$\rho_{\xi}(x,y)=0$ if $x,y$ lie in the same set $X_i$ for some $i$, and
$\rho_{\xi}(x,y)=1$ otherwise. It is called a {\it cut
semimetric} (or just a {\it cut}) if
$\xi$ is a partition into two
subsets.

A triple $(X,\mu,\rho)$, where $(X,\mu)$ is a Lebesgue space and
$\rho$ is an admissible metric, will be called an
{\it admissible metric triple}, or, in short, an {\it admissible
triple}. In what follows, we are mostly interested in the case where
the measure $\mu$ is continuous (though we do not specify this
explicitly), but nevertheless all definitions make sense for an arbitrary
(in particular, finite) Lebesgue space. Unless otherwise stated, we
assume that an admissible metric $\rho$ is summable:
$$\int\limits_X\int\limits_X \rho(x,y)d\mu(x)d\mu(y)<\infty.$$
In other words, $\rho \in L^1(X \times X)$. However, some results
hold without this assumption; moreover, replacing the metric with an equivalent
one, we can arrive at the case of a summable metric.

Obviously, the (summable) admissible metrics form a cone in the space $L^1(X \times X,\mu \times \mu)$, which will
be denoted by $\mathcal{A}{\rm dm}(X,\mu)$.

The group $\mathfrak{G}$ of all automorphisms
(i.e., measurable, $\bmod 0$ invertible, $\mu$-preserving
transformations) of the space $(X,\mu)$ acts in
$L^1(X \times X,\mu \times \mu)$ in a natural way, and this action preserves the cone
$\mathcal{A}{\rm dm}(X,\mu)$ of admissible metrics.

As mentioned in the introduction, in what follows we fix a measure
and vary admissible metrics. It is useful to give a definition of an
admissible metric which is formally
less restrictive, but,  however,
turns out to be equivalent to the original one.

\begin{definition}
An \emph{almost metric} on a Lebesgue space $(X,\mu)$
is a measurable nonnegative function
$\rho$ on $(X\times X,\mu\times \mu)$ such that $\rho(x,y)=\rho(y,x)$
for almost all pairs of points $x, y \in X$
and $\rho(x,z)\leq \rho(x,y)+\rho(y,z)$ for almost
all triples of points $x, y, z \in X$.

An almost metric $\rho$ is called \emph{essentially separable} if for
every $\eps>0$, the space
$X$ can be covered by a countable family of measurable sets with
essential diameter (= essential supremum of the distances between
points) less than
$\eps$.
\end{definition}

In \cite{PZ}, the following correction theorem was proved.

\begin{theorem}\label{ispravlenie}
{\rm1)} Let $\rho$ be an almost (semi)metric on $X$. Then one can
modify it on a set of zero measure in $X$ so that the modified function
is an almost everywhere finite semimetric on $X$.

{\rm2)} Besides, if the almost
semimetric $\rho$ is essentially separable,
then the modified semimetric can be chosen so that the semimetric
space $(X,\rho)$ is separable and the corresponding triple is admissible.
\end{theorem}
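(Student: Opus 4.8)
For part (1), I would first reduce to a genuinely symmetric function: replacing $\rho(x,y)$ by $\tfrac12(\rho(x,y)+\rho(y,x))$ alters it only on the null set where symmetry fails, is symmetric everywhere, still obeys the triangle inequality for almost all triples, and I set $\rho(x,x):=0$ on the diagonal. Almost-everywhere finiteness is then read off from Fubini: the set where $\rho=\infty$ is null in $X\times X$, so for almost every $x$ the slice $\rho(x,\cdot)$ is finite almost everywhere, and the exceptional null set of points $x$ will be discarded and reattached at the end.

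The heart of part (1) is enforcing the triangle inequality \emph{everywhere}, not merely almost everywhere. Let $N\subset X^3$ be the null set of violating triples. Applying Fubini to $N$ and to the finiteness defect, I would isolate a full-measure set $X_1\subset X$ of \emph{regular} points $x$: those for which all three two-dimensional sections of $N$ through $x$ are null and the slice $f_x:=\rho(x,\cdot)$ is a finite measurable function almost everywhere. The decisive observation is that the triangle inequality, used in both directions, yields $|f_x(t)-f_z(t)|=|\rho(x,t)-\rho(z,t)|\le\rho(x,z)$ for almost every triple; thus each $d_t(x,z):=|\rho(x,t)-\rho(z,t)|$ is an honest pseudometric in $(x,z)$ dominated by $\rho$ almost everywhere. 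I would therefore define the corrected function $\bar\rho(x,z):=\operatorname*{ess\,sup}_t|f_x(t)-f_z(t)|$ on $X_1\times X_1$. As an essential supremum of pseudometrics it is automatically a genuine semimetric at \emph{every} pair of $X_1$, and the inequality above gives $\bar\rho\le\rho$ almost everywhere. The main obstacle is the reverse bound $\bar\rho\ge\rho$: the essential supremum cannot detect the single point $t=z$, where $f_z(t)=0$ and $f_x(t)=\rho(x,z)$, so recovering the exact value amounts to showing that for almost every $z$ the balls $\{t:\rho(z,t)<\eta\}$ have positive measure for all $\eta>0$. I would close part (1) by reattaching each point of $X\setminus X_1$ through a fixed regular reference point, which preserves symmetry, the triangle inequality, and finiteness.

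For part (2) the essential-separability hypothesis is exactly what dissolves this obstacle and upgrades the construction to an admissible triple. For each $k$ I would take a countable cover of $X$ by measurable sets of essential diameter less than $1/k$; almost every point $z$ lies in such a set of positive measure, all of whose points are essentially within $1/k$ of one another, which supplies the positive mass of small balls needed for $\bar\rho\ge\rho$, hence $\bar\rho=\rho$ almost everywhere. Choosing one regular representative in each cover set and letting $k\to\infty$ produces a countable set that is dense in $(X_1,\bar\rho)$ after discarding a further null set, so $(X_1,\bar\rho)$ is separable. Separability together with measurability then gives admissibility: $\mu$ is Borel with respect to $\bar\rho$, the completion of $X_1$ is a Polish space, and the measure is nondegenerate.
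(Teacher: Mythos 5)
Your part (1) has a genuine gap, and it is exactly at the point you flag and then defer. The corrected function you propose, $\bar\rho(x,z)=\operatorname{ess\,sup}_t|\rho(x,t)-\rho(z,t)|$, is indeed a semimetric at every pair of regular points, but for it to be a \emph{modification of $\rho$ on a null set} you must prove $\bar\rho=\rho$ almost everywhere, and the lower bound $\bar\rho\ge\rho$ requires (as you say) that for almost every $z$ the balls $\{t:\rho(z,t)<\eta\}$ have positive measure. You never discharge this in part (1), and you cannot: by Theorem~\ref{kritdop} (condition 4) this property is equivalent to admissibility, i.e.\ it holds precisely in the essentially separable situation of part (2), whereas part (1) is about \emph{arbitrary} almost metrics. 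Concretely, take the discrete metric $\rho(x,y)=1$ for $x\ne y$ on $([0,1],\mathrm{Leb})$: it is a true metric, hence an almost metric, and it is not essentially separable; for $x\ne z$ one has $|\rho(x,t)-\rho(z,t)|=0$ for all $t\notin\{x,z\}$, so $\bar\rho\equiv 0$ while $\rho=1$ off the diagonal --- your ``correction'' alters $\rho$ on a set of full measure. Nor is the non-separable case vacuous for part (1): perturbing the discrete metric at three points, say $\rho(x_0,z_0)=1.9$, $\rho(x_0,y_0)=\rho(y_0,z_0)=0.9$, gives an almost metric that genuinely needs correction, yet every condition on its slices that holds for almost all third points $t$ is satisfied at the offending pairs; so no construction like yours, which uses only almost-everywhere information about the functions $\rho(x,\cdot)$, can locate or repair the violation. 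Your closing step of reattaching $X\setminus X_1$ through a reference point does not touch any of this.

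For part (2) your route is essentially sound: under essential separability the cover sets of small essential diameter supply both the positive mass of small balls (giving $\bar\rho=\rho$ a.e.\ via Fubini) and, after choosing good representatives, a countable dense set in a full-measure subset, whence admissibility. Note, however, that the paper itself contains no proof of Theorem~\ref{ispravlenie}: it quotes the result from \cite{PZ}. The substance of that cited theorem is precisely the unrestricted part (1), where the pairs needing correction are invisible to almost-everywhere slice conditions and a fundamentally different idea is required; what you have written proves, in effect, only part (2).
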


Note that the limit in measure (or the almost everywhere limit) of a sequence of
(almost) metrics may turn out to be an almost metric, but the
correction theorem says that this limit is equivalent to a semimetric.
Thus in what follows we always assume that all almost metrics
obtained by limit procedures are corrected to semimetrics, that is,
the limit of a
sequence of semimetrics  with respect to almost everywhere
convergence is a semimetric or a metric. By the same theorem, the limit of a sequence of
semimetrics in the space $L^1$ can also be assumed to be a semimetric.

In what follows, it is convenient to use the following notation.

\begin{definition}
Let $A\subset X$, and let $\rho$ be a measurable semimetric on $X$. By
$\dm{\rho}{A}$ and $\esdm{\rho}{A}$ we denote the
diameter and the essential diameter of the set $A$ in the semimetric
$\rho$, respectively.
\end{definition}

\subsection{The entropy of metric measure spaces; equivalent
definitions of admissible metrics}

Now we introduce the notion of the $\eps$-entropy of a metric on a
measure space, which is heavily used in the sequel. The following
definition goes back to Kolmogorov.

\begin{definition}\label{kolm}
Let $(X, \rho)$ be a metric space equipped with a Borel probability
measure $\mu$. Consider the smallest positive integer $k$ for which
$X$ can be represented as the union of sets
$X_0,X_1, \dots, X_k$ such that $\mu(X_0)<\eps$
and $\dm{\rho}{X_j}<\eps$ for $j=1,\dots,k$.
The \emph{$\eps$-entropy} of the admissible triple
$(X, \mu, \rho)$ is
$$\Ent{\eps}{\rho,\mu}=\mathrm{log} k$$
 (the logarithm is binary). If such $k$ does not exist, we set
$\Ent{\eps}{\rho,\mu}=\infty$.
\end{definition}

However, it turned out that in some situations it is more convenient to use
another definition, which was suggested in
\cite{Mark} and involves the Kantorovich metric (or any other natural
metric) in the space of measures defined on a metric space.

\begin{definition}
Let $(X,\rho)$ be a separable metric space. The \emph{Kantorovich} (or
\emph{transportation}) metric $K_{\rho}$ on the simplex of Borel probability
measures on $X$ is defined by the formula
$$ K_{\rho}(\mu_1, \mu_2)=\inf_{\Psi}\bigg\{\iint_{X\times
X}\rho(x,y)d\Psi(x,y)\bigg\},$$
where $\Psi$ ranges over the set of all Borel probability measures on
$X\times X$ whose projections to the factors coincide with the measures
$\mu_1$ and $\mu_2$, respectively.
The $\eps$-entropy of an admissible triple $(X,\mu,\rho)$ is the
following function of $\eps$:
$$\EntK{\eps}{\rho,\mu}=\inf\{H(\nu)\colon K_{\rho}(\mu,\nu)<\eps\};$$
here $\nu$ ranges over all finite atomic measures on $X$ and the
entropy of an atomic measure is defined in the usual way:
$H(\sum_k c_k\delta_{x_k})=-\sum_k c_k\log c_k$.
\end{definition}

For a compact metric space, estimates  relating these two
definitions of the $\eps$-entropy are given in
\cite{Mark}. 

The following theorem contains a series of equivalent definitions of
admissible semimetrics, generalizing the results of
\cite{Mark,PZ}.

\begin{theorem}\label{kritdop}
Let $\rho$ be a measurable semimetric on $(X,\mu)$.
Then the following conditions are equivalent:
\begin{itemize}
\item[\rm1)] The triple $(X,\mu,\rho)$ is admissible, i.e., the
semimetric $\rho$ is admissible for the measure space
$(X,\mu)$.
\item[\rm2)] For every $\eps>0$, the semimetric $\rho$ has a finite
$\eps$-entropy: $\Ent{\eps}{\rho,\mu}<\infty$.
\item[\rm3)] The measure $\mu$ can be approximated in the metric
$K_{\rho}$ by discrete (=finitely supported) measures.
\item[\rm4)] For $\mu$-almost all $x\in X$ and every $\eps >0$,
the ball of radius $\eps$ (in the metric $\rho$) centered at $x$
has positive measure.
\item[\rm5)] For every $\eps>0$, the space $X$ can be represented as the
union of sets  $X_0,X_1, \dots, X_k$ such that $\mu(X_0)<\eps$ and
$\esdm{\rho}{X_j}<\eps$ for $j=1,\dots,k$.
\item[\rm6)] For every measurable set $A$ of positive measure, the
essential infimum of the function
$\rho$ on $A\times A$ is zero.
 \end{itemize}
\end{theorem}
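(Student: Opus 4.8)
The plan is to prove the six conditions equivalent by establishing the chain $1\Rightarrow 3\Rightarrow 2\Rightarrow 5\Rightarrow 6\Rightarrow 4$ and then closing it through the single nontrivial implication $4\Rightarrow 6$, followed by the re-entry $6\Rightarrow 5\Rightarrow 1$; since every condition is then reachable from every other, all six coincide. Throughout I write $B(x,r)=\{y\in X:\rho(x,y)<r\}$ and use Fubini freely: because $\rho$ is measurable on $X\times X$, for $\mu$-almost every $x$ the slice $y\mapsto\rho(x,y)$ is measurable and the balls $B(x,r)$ are measurable.

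For the easy arc, in $1\Rightarrow 3$ and $1\Rightarrow 2$ I would use separability of $(X_1,\rho)$ on a full-measure set $X_1$: cover $X_1$ by countably many balls of radius $\eps/3$, keep finitely many whose union has complement of measure $<\eps$ (possible since $\mu$ is finite), and read off both a finite cover by sets of diameter $<\eps$, so $\Ent{\eps}{\rho,\mu}<\infty$, and, after cutting these balls into disjoint pieces and collapsing each to a representative point, a finitely supported measure close to $\mu$ in $K_\rho$ (the leftover mass contributes little by absolute continuity of $\iint\rho\,d(\mu\times\mu)$, using $\rho\in L^1$). For $3\Rightarrow 2$, if $K_\rho(\mu,\nu)<\delta$ with $\nu$ finitely supported, take a transport plan $\Psi$ with $\iint\rho\,d\Psi<\delta$; by Markov's inequality the set where $\rho\ge\eps$ has $\Psi$-measure $<\delta/\eps$, so all but a small part of $X$ is sent within $\eps$ to one of the finitely many atoms, yielding a finite cover by balls and hence $\Ent{\eps}{\rho,\mu}<\infty$. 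Next, $2\Rightarrow 5$ is immediate since $\esdm{\rho}{X_j}\le\dm{\rho}{X_j}$. For $5\Rightarrow 6$, fix $A$ with $\mu(A)=a>0$ and $\delta>0$; applying 5 with $\eps=\min(\delta,a/2)$ gives a piece $X_j$ with $\mu(A\cap X_j)>0$ and $\esdm{\rho}{X_j}<\delta$, so $\rho<\delta$ almost everywhere on the positive-measure set $(A\cap X_j)^2$, and letting $\delta\to 0$ forces essential infimum $0$ on $A\times A$. For $6\Rightarrow 4$, if 4 failed there would be $n$ with $A=\{x:\mu(B(x,1/n))=0\}$ of positive measure; applying 6 to $A$ makes $\{(x,y)\in A\times A:\rho(x,y)<1/n\}$ positive, so by Fubini some $x\in A$ has $\mu(B(x,1/n))\ge\mu\{y\in A:\rho(x,y)<1/n\}>0$, a contradiction.

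The crux is $4\Rightarrow 6$, where measurability and finiteness of $\mu$ must be combined, since the pointwise content of 4 is genuinely weaker than 6. Suppose 6 fails: some $A$ with $\mu(A)>0$ satisfies $\rho\ge c$ almost everywhere on $A\times A$ for some $c>0$. Passing to $A'=\{x\in A:\mu(B(x,c/3))>1/n\}$ for a suitable $n$ (possible by 4, since these sets exhaust $A$), I would build an infinite sequence $x_1,x_2,\dots\in A'$ with $\rho(x_i,x_j)\ge c$ for $i\ne j$: having chosen $x_1,\dots,x_k$, the admissible next points form $A'$ minus the finitely many null slices $\{y:\rho(x_i,y)<c\}$, which is nonempty because $\mu(A')>0$. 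The triangle inequality then forces the balls $B(x_i,c/3)$ to be pairwise disjoint, each of measure $>1/n$; infinitely many disjoint sets of measure bounded below contradict $\mu(X)<\infty$. Hence 6 holds. I expect this selection step to be the only real obstacle, as it is where the naive pointwise reading of 4 must be upgraded using the interplay of Fubini and finiteness.

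It remains to re-enter the cycle from 6. For $6\Rightarrow 5$ I would run an exhaustion: with $\eps$ fixed, let $E$ be a countable union of balls of radius $\eps/3$ (each of essential diameter $<\eps$) maximizing $\mu(E)$; if $\mu(X\setminus E)>0$, then 6 applied to $R=X\setminus E$ yields a positive-measure set of $x\in R$ with $\mu(B(x,\eps/3)\cap R)>0$, so adjoining such a ball strictly increases $\mu(E)$, a contradiction. Thus $E$ exhausts $X$ mod $0$, and truncating to finitely many balls gives condition 5. Finally $5\Rightarrow 1$: condition 5 is exactly essential separability of the genuine semimetric $\rho$, so the correction theorem (Theorem~\ref{ispravlenie}, part 2) provides a semimetric equal to $\rho$ almost everywhere that is separable and gives an admissible triple; by the usual Lebesgue-space argument the two semimetrics agree on $X_1\times X_1$ for some full-measure $X_1$, so $(X_1,\rho)$ is separable and $(X,\mu,\rho)$ is admissible. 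Everything outside the crux is bookkeeping with Fubini, Markov's inequality, and the correction theorem.
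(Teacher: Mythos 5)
Your proof is correct, but it takes a genuinely different route from the paper's. The paper does not reprove the hard core of the theorem at all: it cites \cite{PZ} for the equivalence $1)\Leftrightarrow 2)\Leftrightarrow 4)$, and then attaches the remaining conditions via $2)\Leftrightarrow 5)$, $2)\Rightarrow 3)$ (the same atomic-measure construction you use, including the averaging choice of the representative of the exceptional set), $3)\Rightarrow 6)$ (if $\rho\geq\eps$ a.e.\ on $A\times A$, then $K_\rho(\mu,\nu)\geq\mu(A)\eps/2$ for every atomic $\nu$), and $6)\Rightarrow 1)$, the last again leaning on \cite{PZ}: non-admissibility kills 4), and the positive-measure set of centers of null balls then violates 6). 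You, by contrast, keep almost everything internal: your packing argument for $4)\Rightarrow 6)$ (an infinite $c$-separated sequence in $A'$ whose $c/3$-balls are pairwise disjoint with measure bounded below, contradicting $\mu(X)=1$) and your exhaustion argument for $6)\Rightarrow 5)$ are precisely the links the paper outsources to \cite{PZ}; your $3)\Rightarrow 2)$ via Markov's inequality on a transport plan likewise has no counterpart in the paper, which only plays 3) against 6). The net effect is that your proof needs only the correction theorem (Theorem~\ref{ispravlenie}) as external input, where the paper needs the full equivalence $1)\Leftrightarrow 2)\Leftrightarrow 4)$ of \cite{PZ}; the price is length and a few details you should still pin down: ball centers must be chosen among points with measurable slices (a full-measure set, by Fubini); in $6)\Rightarrow 5)$ the maximizing countable union of balls should be produced as the union of a maximizing sequence; and in $5)\Rightarrow 1)$, to get essential separability literally (a countable cover by sets of small essential diameter) take the covers supplied by 5) for $\eps_m\to 0$ and absorb the null set $\bigcap_m X_0^m$, after which the agreement of the corrected semimetric with $\rho$ on a full-measure square is exactly what modification ``on a set of zero measure in $X$'' in Theorem~\ref{ispravlenie} provides (under the weaker reading of a.e.\ agreement on $X^2$ you would instead invoke \cite[Theorem~3]{PZ}, as the paper does in the proof of Lemma~\ref{point}).
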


Let us comment on some implications.

\begin{proof}
In \cite{PZ} it was proved that conditions~1),~2), and~4) are equivalent. The
equivalence of~2) and~5) is obvious, since if
$\esdm{\rho}{X_j}<\eps$, then $X_j$ can be partitioned
into two sets one of which has zero measure and the other one
has diameter at most $2\eps$. Really, if $\esdm{\rho}{X_j}<\eps$, then for almost every $x \in X_j$ for almost all
  $y \in X_j$ the inequality $\rho(x,y)<\eps$ holds. Fix some point $x_0 \in X_j$ such that $\mu(\{y\in X_j \colon \rho(x_0,y)\geq \eps\})=0$.
 Then, by triangle inequality, $\dm{\rho}{\{y\in X_j \colon \rho(x_0,y)< \eps\}}<2\eps$.

Now we prove that 2) implies 3).
Since for every $\eps>0$, the $\eps$-entropy  of $\rho$ (in the sense of Definition \ref{kolm})
is finite, there exists a partition of $X$ into sets
$X_0,X_1,\dots,X_k$ such that $\mu(X_0)<\eps$ and
$\dm{\rho}{X_j}<\eps$ for $j\geq 1$. For the set $X_0$, choose a point
$x_0\in X$, and for each of the sets $X_j$, $j\geq 1$, choose an
arbitrary point $x_j \in X_j$. Consider the atomic measure
$$\nu=\sum\limits_{j=0}^k \mu(X_j)\delta_{x_j}$$
and write the inequality
$$K_{\rho}(\mu,\nu)\leq \sum\limits_{j=0}^k \int\limits_{X_j}\rho(x_j,y)d\mu(y)\leq \eps+\int\limits_{X_0}\rho(x_0,y)d\mu(y).$$
Choosing $x_0$ appropriately, we can make the last term not bigger
than its mean value
$$\int\limits_{X}\int\limits_{X_0}\rho(x,y)d\mu(y)d\mu(x).$$
Thus, for an appropriate choice of $x_0$, we have
$$K_{\rho}(\mu,\nu)\leq \eps+\int\limits_{X_0}\int\limits_X\rho(x,y)d\mu(x)d\mu(y).$$
This estimate corresponds to transferring whole $X_i$ to $x_i$.
The latter expression is small for sufficiently small
$\eps$ by the absolute continuity of the integral and the summability of
the function $\rho$.

Next we prove that 3) implies 6).
If 6) does not hold, then there exist
$\eps>0$ and a set $A$ of positive measure such that
$\rho(x,y)\geq \eps$ for almost all pairs
$x,y \in A$. But then for every
$y\in X$, for almost all $x \in A$, we have $\rho(x,y)\geq
\eps/2$, so that $K_{\rho}(\mu,\nu)\geq \mu(A)\eps/2$ for every atomic measure
$\nu$, a contradiction with 3).

Finally, we prove that 6) implies 1), namely, we assume that $\rho$ is
not admissible and prove that~6) fails. For every
fixed  $\eps>0$, the function $x \to \mu(\{y\in X \colon \rho(x,y)<\eps\})$
is measurable by Fubini's theorem, so that the set
$A_{\eps}=\{x \colon \mu(\{y\in X \colon \rho(x,y)<\eps\})=0\}$
is measurable. If $\rho$ is not admissible, then~4)
fails, hence for some
$\eps>0$ the set $A_\eps$ has positive measure. Taking
$A=A_{\eps}$, we see that the essential infimum of $\rho$ on
$A\times A$ is positive.
\end{proof}

Two more definitions of admissible metrics are given in Section~2.6,
one in terms of averages of distances over sets of positive
measure, and the other one in terms of random distance matrices, which are
invariants of metric triples.

\subsection{The theorem on conditional metrics}

In this section, we prove a result similar to the well-known theorem
on the existence of conditional measures (``Rokhlin's canonical
system of measures'') for measurable partitions: a theorem on the
existence of a system of conditional admissible metrics on almost all
elements of a partition. Thus we will show that if
$(X,\mu,\rho)$ is an admissible metric triple, then for
every measurable partition $\xi$
of $X$, almost all elements of $\xi$ can be equipped with a
canonical structure of a metric triple with
respect to the induced metric. A
nontrivial issue is to define metrics on the elements of the
partition.

Recall that a measurable partition $\xi$ of a Lebesgue space can be
defined as the partition into the inverse images of points under a
measurable map from
$(X,\mu)$ to another Lebesgue space, e.g., under a
measurable real-valued function or vector-valued function with values in a separable vector
topological space. An intrinsic definition of a measurable partition
suggested in \cite{Ro} relies on the existence of a countable basis of
measurable sets determining the partition. For a measurable partition
$\xi$, the quotient space $X/\xi$
 (the base of $\xi$) is a Lebesgue
space (sometimes, by definition); the image of
$\mu$ under the canonical quotient map
$\pi:X \rightarrow X/\xi$ is a measure
$\mu_{\xi}$ on  $X/\xi$. The main characteristic property of a
measurable partition is the existence and uniqueness of a canonical
system of conditional measures
$\mu^C$ on $\mu_\xi$-almost all
elements $C \in X/{\xi}$ of $\xi$, the spaces
$(C,\mu^C)$ being Lebesgue spaces. In fact, the theorem on the
existence of conditional measures is a
theorem on an integral representation of the projection in
$L^2$ to the subspace of functions which are constant on the elements
of partition $\xi$, or, in other words, this is an integral representation of
the operator of the conditional expectation operator. The crucial fact is that for
every $\mu$-measurable map $f$ with values in a space $V$ with a Borel structure
(e.g., a measurable real-valued function), and for almost all elements
$C\in X/{\xi}$ of $\xi$, the restriction $f|_C$ of $f$ to $C$ is
measurable with respect to the conditional measure
$\mu^C$, and the map $C\mapsto f|_C$
is measurable on the base of $\xi$.
For a summable function $f$, this means that an analog of Fubini's theorem
holds: the integral of $f$ over the whole space is equal to the iterated
integral computed first over the elements and then over the base.
All these definitions are well-behaved with respect to modifying a measurable
partition on a set of zero measure.

Below we will obtain a similar result for measurable partitions of
measure spaces equipped with a metric. Consider an admissible triple
$(X,\mu,\rho)$ and assume that in $X$ we are given a
measurable partition $\xi$. Denote by  $\mu_{\xi}$ the quotient measure on the
quotient space $X/\xi$, i.e., on the base
 of $\xi$. We will regard
elements (fibers) of $\xi$ either as points of the base, denoting them
by $C \in X/\xi$, or, if  convenient, as subsets of
$X$, writing $C\in \xi$. The conditional measure on an element $C$
will be denoted by $\mu^C$.

Using this notation, we state the theorem on the existence of
conditional metrics on almost all elements of a measurable partition, and
measurability of the dependence of a metric as a function of element $C$
of the partition in appropriate sense. For making this  statement rigorous
we use a metric invariant of a function of two variables (in particular, on a metric)
on a measure space ---  so called {\it matrix distributions} which was introduced in \cite{Clas}.
This notion gives a simple way to define what does it mean measurability of the family of metrics,
which are defined on the various spaces (on the elements of a partition) --- see item 2 in the theorem.
\begin{theorem}\label{cond}
\begin{itemize}
\item[\rm1)]  The restriction of the metric $\rho$ to $\mu_{\xi}$-almost
every element $C \in \xi$ of the
partition $\xi$ is well defined and determines the structure of an
admissible triple $(C,\mu^C,\rho^C)$ for almost all
$C\in \xi$.

\item[\rm2)] Let $n$ be a positive integer, let
$\Omega$ be any open set in the $n^2$-dimensional
space of $n\times n$ matrices. For almost any element $C$ of $\xi$
one may define by 1) an admissible triple $(C,\mu^C,\rho^C)$.
Let $p_{\Omega}(C)$ denotes the probability that a matrix
$(\rho(z_i,z_j))_{1\leq i,j\leq n}$ belongs to $\Omega$,
where $z_1,\dots,z_n$ are independent points in $C$ distributed
by $\mu^C$. Then $p_{\Omega}$ is a measurable function of $C$.

\end{itemize}
\end{theorem}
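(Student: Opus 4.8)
The plan is to reduce the abstract statement about conditional metrics to the classical Rokhlin theory of conditional measures, by transporting everything to a standard coordinate model via a countable generating family of measurable sets. First I would fix a countable algebra $\mathcal{B}=\{B_1,B_2,\dots\}$ of measurable subsets of $X$ that generates the measurable structure and, simultaneously, separates points in the $\rho$-metric on a full-measure subset $X_1$ (possible by admissibility and separability). The key preliminary observation is that since $\rho\in L^1(X\times X)$, for almost every pair $(C,C')$ the function $\rho$ restricts to a summable function on $C\times C'$; this follows from the Fubini-type property for conditional measures stated in the excerpt, applied to the summable two-variable function $\rho$ on the partition $\xi\times\xi$ of $X\times X$. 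Thus for almost every $C$ the restriction $\rho^C:=\rho|_{C\times C}$ is a well-defined almost metric on $(C,\mu^C)$ in the sense of Definition 2 (symmetry and the triangle inequality hold for almost all pairs, resp. triples, on almost every $C$, again by Fubini over the base).

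For part 1), the remaining point is to upgrade ``almost metric'' to ``admissible triple.'' Here I would invoke the correction theorem (Theorem \ref{ispravlenie}): it suffices to check that $\rho^C$ is essentially separable for almost every $C$. I would verify this using criterion 6) of Theorem \ref{kritdop} fiberwise. Indeed, by admissibility of $\rho$ on $X$, criterion 6) holds globally; I would show that it descends to almost every fiber by a disintegration argument. Suppose, toward contradiction, that on a positive-measure set of fibers $C$ criterion 6) fails, i.e.\ there is a positive-$\mu^C$-measure subset $A_C\subset C$ on which $\esdm{\rho}{A_C}>0$. Measurably selecting such $A_C$ (using measurable selection of the function $x\mapsto\mu^C(\{y\in C:\rho(x,y)<\eps\})$, which is measurable in $(C,x)$ by Fubini) and taking the union $A=\bigcup_C A_C$ produces a positive-measure set in $X$ violating global criterion 6), a contradiction. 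Hence for almost every $C$ the triple $(C,\mu^C,\rho^C)$ is admissible.

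For part 2), I would express $p_\Omega(C)$ as an integral over the fiber power and apply Rokhlin's measurability for vector-valued functions. Concretely,
$$
p_\Omega(C)=\int_{C^n}\mathbf{1}_\Omega\big((\rho(z_i,z_j))_{i,j}\big)\,d\mu^C(z_1)\cdots d\mu^C(z_n),
$$
where the integrand is the pullback of the indicator of the open set $\Omega$ under the measurable map $(z_1,\dots,z_n)\mapsto(\rho(z_i,z_j))_{i,j}$. The $n$-fold product partition $\xi^{(n)}$ of $X^n$ is measurable, its conditional measures are the products $(\mu^C)^{\otimes n}$, and the two-variable function $\rho$ lifts to a measurable function on $X^n\times X^n$; thus the integrand is a $\mu^{(n)}$-measurable function on $X^n$. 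By the Fubini/Rokhlin property quoted in the excerpt, applied to this integrand on the partition $\xi^{(n)}$, the fiber integral $p_\Omega$ is a measurable function on the base $X^n/\xi^{(n)}=X/\xi$, as required.

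The main obstacle I anticipate is the measurability bookkeeping in part 1): making rigorous the claim that criterion 6) passes to almost every fiber requires a genuine measurable selection of the offending sets $A_C$, and one must be careful that $\mu^C$ itself is only defined almost everywhere and measurably in $C$. I would handle this by working with the countable generating algebra $\mathcal{B}$: it suffices to test criterion 6) on sets from $\mathcal{B}$ restricted to $C$, reducing an uncountable family of fiber conditions to countably many measurable functions of $C$, for each of which the ``bad'' set of fibers is measurable. The essential-separability and the $L^1$-integrability steps are then routine applications of the stated Fubini property, and the correction theorem finishes the argument.
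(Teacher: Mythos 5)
Your proof breaks down at the very first ``Thus,'' and it breaks down at precisely the point that Theorem~\ref{cond} exists to overcome. The Fubini/Rokhlin property applied to the partition $\xi\times\xi$ of $X\times X$ tells you that $\rho$ restricts to a summable function on $C\times C'$ for $\mu_\xi\times\mu_\xi$-almost every \emph{pair} $(C,C')$. But the diagonal pairs $(C,C)$ form a \emph{null} set in the base $(X/\xi)\times(X/\xi)$ whenever $\mu_\xi$ is continuous (the typical case, e.g.\ the partition of the square into vertical segments), so this statement carries no information whatsoever about the sets $C\times C$. Worse, $\bigcup_C C\times C$ is itself a $\mu\times\mu$-null subset of $X\times X$, and $\rho$ is only an $L^1$-class: different representatives of $\rho$ give arbitrarily different values on this set, so ``$\rho^C:=\rho|_{C\times C}$'' is not a well-defined object at all. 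This is exactly the trap the paper warns about before its proof (``It may seem that\dots it suffices to restrict the metric to almost every element of the partition, but this is not so''). The same flaw infects your other two steps. In the contradiction argument for criterion 6), the global essential infimum of $\rho$ on $A\times A$ with $A=\bigcup_C A_C$ is governed by pairs $x\in A_C$, $y\in A_{C'}$ with $C\neq C'$ (the same-fiber pairs are $\mu^2$-null), so fiberwise violations of 6) produce no global violation. And in part 2), your ``$n$-fold product partition $\xi^{(n)}$ with base $X/\xi$ and conditional measures $(\mu^C)^{\otimes n}$'' is not a measurable partition of $(X^n,\mu^n)$: the sets $C^n$ do not cover $X^n$, their union is $\mu^n$-null, and the measure $\int(\mu^C)^{\otimes n}\,d\mu_\xi(C)$ is singular with respect to $\mu^n$, so a $\mu^n$-a.e.-defined integrand has no canonical restriction to it.

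What is missing is a device for producing a \emph{canonical representative} of $\rho$ that can be meaningfully evaluated on the null set $\bigcup_C C\times C$, and this is where the paper uses admissibility in an essential way. It picks a sequence $x_1,x_2,\dots$ dense in a full-measure subset, forms the \emph{one-variable} functions $f_n(\cdot)=\rho(\cdot,x_n)$, and observes that by density and the triangle inequality
$$\rho(x,y)=\inf_n\{f_n(x)+f_n(y)\}.$$
One-variable functions restrict to almost every fiber by the classical Rokhlin theory, so this formula \emph{defines} $\rho^C$ on almost every $C$ as a measurable function of two variables; admissibility of $(C,\mu^C,\rho^C)$ then follows because a subspace of a separable (semi)metric space is separable, and summability follows from the triangle inequality. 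Part 2) is handled the same way: reducing $\Omega$ to cylinders, the event $\rho(z_i,z_j)<p$ is rewritten as a countable union of events of the form $f_n(z_i)<r_1$, $f_n(z_j)<r_2$ with $r_1+r_2<p$ rational, and the probabilities of these one-variable events are measurable in $C$ by Rokhlin's theorem. Your countable algebra $\mathcal{B}$ separating points is in the right spirit, but you use it only for measurability bookkeeping of criterion 6); to repair the proof you would need to use it (or the dense sequence) to \emph{construct} $\rho^C$ before any fiberwise admissibility check can even be stated.
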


It may seem that in order to obtain the required assertions,
it suffices to restrict the metric to almost every
element of the partition, but
this is not so. The problem is that for measurable functions of two
(or several) variables, e.g., for an admissible metric, one
cannot directly use a Fubini-like theorem on the measurability of
restrictions  of functions to the
elements of the partition.
Moreover, in general this is not true for
an arbitrary function. Indeed, the set of pairs
$(x,y)$ lying in the same fiber of $\xi$ has (in general)
zero measure in $X\times X$, hence there is no known
canonical way to
restrict an arbitrary $\mu^2$-measurable
function $f(x,y)$ to this set.

Hence, in order to prove that
the metrics on the elements are admissible
and measurable over the base of the
partition, one should use special properties of
these functions.
It turns out that the needed property is admissibility.
Note that similar questions, in spite of their importance, have not
yet been studied in general setting. We use the separability of an
admissible metric, which ensures that this
metric can be defined by a vector function
of {\it one variable}. The trick of passing to a sequence
for one or both arguments of a function of
two arguments, mentioned above
and exploited below, was essentially used in
\cite{Clas} for the classification of measurable functions of several
variables via a {\it random choice} of sequences.

\begin{proof}
Choose a sequence $x_1,x_2,\dots$ in $X$, which is dense
in some subset $X_1$ of full measure in $X$.
We use the functions
$f_n(\cdot)=\rho(\cdot,x_n)$, $n=1,2, \dots$. We also require
that those functions are simultaneously measurable on $X_1$.
Further, note that since the sequence
$\{x_n\}$ is dense, we have
$$ \rho(x,y)= \inf_n \{f_n(x)+f_n(y)\}.$$
Therefore, for almost all elements $C$ of
$\xi$ equipped with the conditional measures $\mu^C$, this formula defines a metric
as a measurable function of two variables. The admissibility of the
triple $(C,\mu^C,\rho^C)$ is straightforward, because a subspace of a
separable metric space is separable. The fact that
$\rho^C$ is summable with respect to the measure $\mu^C\times \mu^C$ for almost
every $C$ easily follows from the triangle inequality and the
separability.

Now we should explain measurability statement 2). Without loss of generality,
$\Omega$ is a cylinder $\{(a_{i,j})_{1\leq i,j\leq n}: 0\leq a_{i,j}<p_{i,j} \}$
for fixed positive numbers $p_{i,j}$. The condition
$\rho(x,y)= \inf_n \{f_n(x)+f_n(y)\}<p$ is equivalent to the countable number of conditions like
$f_n(x)<r_1,f_n(y)<r_2$ for some index $n$ and rationals $r_1,r_2$ with $r_1+r_2<p$.
So, the probability that a random distance matrix belongs to $\Omega$
may be expressed via probabilities that $\rho(z_i,x_n)$ belongs to some
interval on a real line. Such events are (at last) independent, and the product of corresponding
probabilities is measurable, since each of them is measurable by Rokhlin theorem.
\end{proof}

\subsection{The space of admissible metrics. The definition and
properties of the m-norm}

When working with admissible semimetrics, it is convenient to introduce a
special norm on the cone of admissible metrics
$\mathcal{A}{\rm dm}$, which we call the
m-norm; it is defined on $\mathcal{A}{\rm dm}$ and on a wider vector subspace of
$L^1(X^2)$.

\begin{definition}
Given a function $f\in L^1(X^2)$, we define a finite or infinite
norm of $f$ as
$$\mnorm{f}=\inf\{\Lnorm{\rho}\colon \rho \mbox{ is a semimetric, }
\rho(x,y)\geq |f(x,y)| \mbox{ for almost all } x,y \in X\}.$$
\end{definition}

Note that $\mnorm{\cdot}$ is indeed a norm, in the sense that it is homogeneous
and satisfies the triangle inequality. If $f$ is a
semimetric, then
$\mnorm{f}=\Lnorm{f}$. It follows directly from the definition that
for every $f$ we have
$\mnorm{f}\geq \Lnorm{f}$. Hence convergence in the m-norm
implies convergence in
$L^1(X^2)$. In the theory of partially ordered Banach spaces, such a
convergence is called convergence with a regulator.
Note that the operators
corresponding to measure-preserving automorphisms preserve also the
m-norm.

Consider the set of all functions in
$L^1(X^2)$ with finite m-norm:
$$\mathbb{M}=\{f\in L^1(X^2)\colon \mnorm{f} < \infty \}.$$
Clearly, $\mathbb{M}$ is a linear subspace in $L^1(X^2)$.

\begin{lemma}
The space $\mathbb{M}$ is complete in the m-norm.
\end{lemma}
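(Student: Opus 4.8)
The plan is to invoke the standard criterion that a normed space is complete if and only if every absolutely convergent series converges. Accordingly, I would start from a sequence $(g_n)$ in $\mathbb{M}$ with $\sum_n \mnorm{g_n} < \infty$ and show that the series $\sum_n g_n$ converges in the m-norm to an element of $\mathbb{M}$. The whole argument is designed to exploit the fact that the dominating semimetrics provided by the definition of $\mnorm{\cdot}$ can be added up, because the cone of semimetrics is closed under convergent countable sums.

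First I would use the definition of the m-norm to choose, for each $n$, a semimetric $\rho_n$ with $\rho_n(x,y)\geq |g_n(x,y)|$ for almost all $x,y$ and $\Lnorm{\rho_n}\leq \mnorm{g_n}+2^{-n}$, so that $\sum_n \Lnorm{\rho_n}<\infty$. The key structural step is to set $\rho=\sum_n \rho_n$. By monotone convergence $\Lnorm{\rho}=\sum_n\Lnorm{\rho_n}<\infty$, hence $\rho\in L^1(X^2)$ and is finite for almost all pairs; on that full-measure set symmetry, vanishing on the diagonal, and the triangle inequality all pass to the infinite sum, so $\rho$ is an almost semimetric, which by the correction theorem (Theorem~\ref{ispravlenie}) may be taken to be an honest semimetric.

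Next, since $\sum_n |g_n|\leq \rho\in L^1(X^2)$, the series $\sum_n g_n$ converges absolutely almost everywhere and in $L^1$ to a limit $f$ satisfying $|f|\leq \rho$; in particular $f\in\mathbb{M}$. To finish, I would estimate the tails: for each $N$ the tail sum $R_N=\sum_{n>N}\rho_n$ is again a semimetric (a convergent sum of semimetrics), and it dominates $|f-\sum_{n\leq N}g_n|=|\sum_{n>N}g_n|$ almost everywhere. Hence, directly from the definition of the m-norm, $\mnorm{f-\sum_{n\leq N}g_n}\leq \Lnorm{R_N}=\sum_{n>N}\Lnorm{\rho_n}$, which tends to $0$ as $N\to\infty$. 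Thus the partial sums converge to $f$ in the m-norm, and $\mathbb{M}$ is complete.

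The one point that genuinely needs care — the main obstacle — is justifying that the infinite sum $\rho$ is a bona fide semimetric dominating the limit rather than merely an $L^1$-function: this is exactly where the $L^1$-summability (which yields almost-everywhere finiteness via monotone convergence) together with the correction theorem does the work. Everything else is the routine translation of the absolutely-convergent-series criterion into this setting.
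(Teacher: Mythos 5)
Your proposal is correct and is essentially the paper's own argument: the paper thins a Cauchy sequence so that the consecutive differences $f_n-f_{n+1}$ have m-norms bounded by $2^{-n}$ and then dominates the tail $|f_n-f|$ by the summed semimetric $\sum_{k\geq n}\rho_k$, which is exactly your absolutely-convergent-series formulation applied to $g_n=f_n-f_{n+1}$. The one cosmetic difference is that you explicitly invoke the correction theorem (Theorem~\ref{ispravlenie}) to legitimize the countable sum of semimetrics, whereas the paper relies on its standing convention that almost-everywhere limits of semimetrics are corrected to genuine semimetrics.
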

\begin{proof}
Let $f_n$ be a Cauchy sequence with respect to the m-norm. We
will show that it has a limit in the m-norm. Since the $L^1$ norm is
dominated by the m-norm, $f_n$ is also a Cauchy sequence in
$L^1(X^2)$, so that it has a limit $f\in L^1(X^2)$. Thinning the
sequence, we may assume that
$f_n$ converges to $f$ almost everywhere and, besides,
$\mnorm{f_n-f_{n+1}}< \frac{1}{2^n}$  for all $n$. By the
definition of the m-norm, this means that there exists a semimetric
$\rho_n$ that dominates $|f_n-f_{n+1}|$ almost everywhere and satisfies
$\Lnorm{\rho_n}< \frac{1}{2^n}$. Note that the semimetric
$\sum_{k=n}^{\infty}\rho_k$ dominates the difference
$|f_n-f|$ almost everywhere, so that $\mnorm{f_n-f}\leq
\frac{1}{2^{n-1}}$. It follows that the sequence
$f_n$ converges to $f$ in the m-norm, as required.
\end{proof}

Now we will study simple properties of convergence of semimetrics.

\begin{lemma}\label{shod}
If a sequence of semimetrics
$\rho_n$ converges to a function $\rho$ in the m-norm, and for every
$\eps>0$ the entropy
 $\Ent{\eps}{\rho_n,\mu}$ is finite for all sufficiently large $n$, then
$\rho$ is an admissible semimetric.
\end{lemma}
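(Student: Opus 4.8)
The plan is to argue by contraposition, relying on the equivalence of conditions (1) and (6) in Theorem~\ref{kritdop}. First I would observe that $\rho$ is genuinely a semimetric: since $\mnorm{\cdot}$ dominates $\Lnorm{\cdot}$, the hypothesis gives $\rho_n\to\rho$ in $L^1(X^2)$, so $\rho$ is an $L^1$-limit of semimetrics and hence coincides almost everywhere with a semimetric by the correction Theorem~\ref{ispravlenie}. It then remains to prove admissibility. Assume the contrary; by condition~(6) of Theorem~\ref{kritdop} there are $\eps_0>0$ and a set $A$ with $\mu(A)>0$ such that $\rho(x,y)\ge \eps_0$ for almost all $(x,y)\in A\times A$. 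I will contradict the finiteness of the entropy of $\rho_n$ by showing that one such $\rho_n$ also violates~(6), and is therefore not admissible.

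The crucial step is to control the connecting semimetric on $A$. Fix $\delta$ with $5\delta<\eps_0\,\mu(A)^2$ and choose $n$ so large that both the entropy hypothesis holds and $\mnorm{\rho-\rho_n}<\delta$; the definition of the m-norm then furnishes a semimetric $\sigma$ with $\sigma\ge|\rho-\rho_n|$ a.e.\ and $\Lnorm{\sigma}<\delta$. I claim that $A$ contains a subset $A'$ of positive measure with $\esdm{\sigma}{A'}<\eps_0/2$. Indeed, by Chebyshev's inequality,
$$\int_A \mu\bigl(\{y\in A:\sigma(x,y)\ge \eps_0/5\}\bigr)\,d\mu(x)=(\mu\times\mu)\bigl(\{(x,y)\in A^2:\sigma\ge \eps_0/5\}\bigr)\le \frac{5\delta}{\eps_0}<\mu(A)^2,$$
so $B(x):=\{y\in A:\sigma(x,y)<\eps_0/5\}$ has positive measure for a positive-measure set of $x$. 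I would pick a centre $x_0$ in this set that also lies in the full-measure set on which the triangle inequality $\sigma(y,z)\le\sigma(y,x_0)+\sigma(x_0,z)$ holds for a.e.\ $(y,z)$ (Fubini), and set $A'=B(x_0)$; then the triangle inequality gives $\sigma(y,y')<2\eps_0/5<\eps_0/2$ for almost all $(y,y')\in A'\times A'$.

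The contradiction is then immediate: for almost all $(x,y)\in A'\times A'$ we have $\rho_n(x,y)\ge\rho(x,y)-\sigma(x,y)\ge \eps_0-2\eps_0/5=3\eps_0/5>0$, so the essential infimum of $\rho_n$ on $A'\times A'$ is positive. By Theorem~\ref{kritdop} this means $\rho_n$ fails~(6) and is thus not admissible, so by~(1)$\Leftrightarrow$(2) its $\eps$-entropy is infinite for some $\eps$ --- contradicting the assumption that $\Ent{\eps}{\rho_n,\mu}<\infty$ for every $\eps$ and all large $n$. Hence $\rho$ is admissible.

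I expect the subset-extraction step to be the main obstacle. The naive idea --- to dominate $\rho$ by the semimetric $\rho_n+\sigma$ and invoke admissibility of the latter --- fails, because $\sigma$ need not be admissible (for example, a scaled discrete metric has arbitrarily small $L^1$ norm yet infinite $\eps$-entropy), so one cannot bound $\rho$ by an admissible semimetric. What rescues the argument is that $\sigma$ is a \emph{semimetric} of small $L^1$ norm: a single well-chosen centre then controls all pairwise distances through the triangle inequality, yielding the required positive-measure set of small $\sigma$-diameter. The only technical care needed is to select $x_0$ inside the full-measure set on which the almost-everywhere triangle inequality for $\sigma$ holds, so that the diameter bound on $A'$ is valid.
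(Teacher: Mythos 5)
Your core geometric step is sound, and it is in fact the same device as the paper's own key ingredient: Proposition~\ref{prop1} (Chebyshev's inequality plus a well-chosen centre plus the triangle inequality) is exactly your extraction of a positive-measure set $A'\subset A$ on which the dominating semimetric $\sigma$ is small, so that $\rho_n\ge\rho-\sigma\ge 3\eps_0/5$ a.e.\ on $A'\times A'$. The paper runs this tool forwards --- it intersects an entropy partition for $\rho_n$ with the two-set decomposition for the error semimetric and reads off a finite-entropy partition for $\rho$ --- whereas you run it in the contrapositive. Up to that point your argument is fine. The genuine gap is in the last step: the hypothesis of the lemma is ``for every $\eps>0$ there is $N(\eps)$ such that $\Ent{\eps}{\rho_n,\mu}<\infty$ for all $n\ge N(\eps)$'', and this does \emph{not} imply that any single $\rho_n$ has finite entropy at every scale, i.e.\ that any $\rho_n$ is admissible. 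So your conclusion ``$\rho_n$ fails condition~6) of Theorem~\ref{kritdop}, hence its $\eps$-entropy is infinite for \emph{some} $\eps$'' contradicts nothing: that $\eps$ is allowed to depend on $n$. Your own closing remark supplies the counterexample: take $\rho$ admissible on a space with continuous measure, let $d$ be the discrete metric, and put $\rho_n=\rho+\frac1n d$; then $\mnorm{\rho_n-\rho}\le\frac1n\to 0$ and the hypothesis of the lemma holds, yet \emph{every} $\rho_n$ is non-admissible, with $\Ent{\eps}{\rho_n,\mu}=\infty$ whenever $\eps<1/n$. Relatedly, the instruction ``choose $n$ so large that the entropy hypothesis holds'' is circular: the scale at which you need finiteness is governed by $\mu(A')$, which depends on $n$.

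The repair stays entirely inside your computation, because your Chebyshev bound is uniform in $n$. Since $\delta$ is fixed before $n$, choosing $x_0$ at least as good as average gives $\mu(A')\ge \mu(A)-\frac{5\delta}{\eps_0\mu(A)}=:c_0>0$ for every $n$ with $\mnorm{\rho-\rho_n}<\delta$ (shrink to $c_0/2$ if you insist on also placing $x_0$ in the full-measure set of good centres). Now fix one scale $\eps^*<\min(3\eps_0/5,\,c_0)$ once and for all. For each such $n$, any set of $\rho_n$-diameter less than $\eps^*$ can meet $A'$ only in a null set (otherwise a positive-measure square would carry both $\rho_n<3\eps_0/5$ pointwise and $\rho_n\ge 3\eps_0/5$ a.e.), so in any candidate partition the exceptional set contains $A'$ up to measure zero and hence has measure at least $c_0>\eps^*$; thus $\Ent{\eps^*}{\rho_n,\mu}=\infty$ for \emph{all} $n$ with $\mnorm{\rho-\rho_n}<\delta$, i.e.\ for all sufficiently large $n$. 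This contradicts the hypothesis applied to the single fixed $\eps=\eps^*$, and the lemma follows. With this replacement your contrapositive argument becomes a correct alternative to the paper's direct construction.
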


\begin{corollary}\label{c1}
If a sequence of admissible semimetrics
$\rho_n$ converges to a function $\rho$ in the m-norm, then
$\rho$ is also an admissible semimetric.
\end{corollary}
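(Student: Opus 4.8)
The plan is to verify that $\rho$ satisfies condition~5) of Theorem~\ref{kritdop} for every $\eps>0$, and thereby conclude admissibility. First, since convergence in the m-norm dominates convergence in $L^1(X^2)$, the limit $\rho$ is an $L^1$-limit of semimetrics; by the correction theorem (Theorem~\ref{ispravlenie}) and the remark following it, I may assume $\rho$ is a genuine measurable semimetric, so that Theorem~\ref{kritdop} applies.

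The heart of the argument is an auxiliary estimate: \emph{a semimetric $\sigma$ of small $L^1$-norm has small essential diameter off a set of small measure.} Fix a scale $\eta>0$ and set $\phi(x)=\mu(\{y:\sigma(x,y)\geq\eta\})$, which is measurable by Fubini. Markov's inequality gives $\int_X\phi\,d\mu=(\mu\times\mu)(\sigma\geq\eta)\leq \Lnorm{\sigma}/\eta$, so the bad set $B=\{x:\phi(x)\geq 1/2\}$ satisfies $\mu(B)\leq 2\Lnorm{\sigma}/\eta$. For any two good points $x,x'\notin B$, the balls $\{y:\sigma(x,y)<\eta\}$ and $\{y:\sigma(x',y)<\eta\}$ each have measure exceeding $1/2$, hence overlap on a set of positive measure; picking $y$ in the overlap and using the triangle inequality yields $\sigma(x,x')<2\eta$. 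Thus $\esdm{\sigma}{X\setminus B}\leq 2\eta$, with $\mu(B)$ controlled by $\Lnorm{\sigma}$.

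Now fix $\eps>0$. Choose $n$ so large that $\Ent{\eps/4}{\rho_n,\mu}<\infty$ (possible by hypothesis) and that there is a semimetric $\sigma_n\geq|\rho_n-\rho|$ with $\Lnorm{\sigma_n}<\delta$ for a small $\delta$ to be fixed (since $\mnorm{\rho_n-\rho}\to 0$ and the m-norm is an infimum over dominating semimetrics, such a $\sigma_n$ exists for large $n$). Finiteness of the $\eps/4$-entropy of $\rho_n$ yields a cover $X=Y_0\cup Y_1\cup\dots\cup Y_k$ with $\mu(Y_0)<\eps/4$ and $\dm{\rho_n}{Y_j}<\eps/4$ for $j\geq 1$. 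Applying the auxiliary estimate to $\sigma_n$ at scale $\eta=\eps/8$ gives a set $B$ with $\mu(B)\leq 16\,\Lnorm{\sigma_n}/\eps<\eps/4$ (taking $\delta<\eps^2/64$) and $\esdm{\sigma_n}{X\setminus B}\leq\eps/4$. Since $\rho\leq\rho_n+\sigma_n$ almost everywhere, each piece $Y_j\setminus B$ with $j\geq 1$ has $\esdm{\rho}{Y_j\setminus B}\leq \eps/4+\eps/4=\eps/2$. Setting $X_0=Y_0\cup B$ (of measure $<\eps/2<\eps$) together with the finitely many sets $Y_j\setminus B$ verifies condition~5) of Theorem~\ref{kritdop} at scale $\eps$; as $\eps$ was arbitrary, $\rho$ is admissible. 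Corollary~\ref{c1} is then immediate, since admissible semimetrics have finite $\eps$-entropy for all $\eps$.

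I expect the auxiliary estimate (the ball-overlap argument) to be the main obstacle; everything else is bookkeeping with the two covers. The subtlety it resolves is that a semimetric of small $L^1$-norm need \emph{not} itself be admissible (e.g.\ a small multiple of the discrete metric), so one cannot directly subdivide $X$ into finitely many sets of small $\sigma_n$-diameter. The correct point is that the region on which $\sigma_n$ fails to be small must have small measure and can be absorbed into $X_0$.
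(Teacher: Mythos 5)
Your proof is correct and follows essentially the same route as the paper, which deduces the corollary from Lemma~\ref{shod}: correct the $L^1$-limit to a semimetric, extract from the definition of the m-norm a dominating semimetric of small $L^1$-norm, show it is uniformly small off a set of small measure, and intersect that good set with an entropy cover for $\rho_n$ to verify finiteness of the $\eps$-entropy (condition~5) of Theorem~\ref{kritdop}). The only immaterial difference is in the auxiliary estimate: the paper's Proposition~\ref{prop1} fixes a single center $x_0$ chosen via Chebyshev's inequality and bounds all distances through it, whereas you discard the set of points whose $\eta$-balls have measure at most $1/2$ and use a ball-overlap argument; both yield the same conclusion with comparable constants.
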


\begin{proof}[Proof of Lemma~\ref{shod}]
Since the sequence $\rho_n$ converges in the m-norm, it also converges
in the space $L^1(X^2)$, so that we may assume that the limit function
$\rho$ is a semimetric. It remains to prove that $\rho$ is admissible.
For this we will show that its $\eps$-entropy is finite  for every
$\eps$. First we prove an auxiliary
proposition.

 \begin{statement}
 \label{prop1}
If $p$ is a measurable semimetric on
$(Y,\mu)$ such that $\|p\|_{_{L^1(Y^2)}}<\frac{\eps^2}{2}$
then there exist two disjoint sets
$Y_0, Y_1$ with $Y_0\cup Y_1=Y$ such that
$\mu(Y_0)\leq \eps$ and $\dm{p}{Y_1}\leq\eps$.
 \end{statement}
 \begin{proof}
Note that the map
$x\to \mu(\{y\in Y\colon p(x,y)\geq \eps/2\})$ is measurable by
Fubini's theorem, and its integral over $Y$ is
bounded from above by
$\frac{\frac{\eps^2}{2}}{\frac{\eps}{2}}=\eps$  by Chebyshev's
inequality. Hence we can choose
$x_0$ such that the measure of the set
$Y_0=\{y\in Y\colon p(x_0,y)\geq \eps/2\}$ does not exceed $\eps$. But
for any $x,y \in Y_1=Y\setminus Y_0$, the triangle inequality implies
that $p(x,y)\leq p(x,x_0)+p(y,x_0)\leq \eps$. The proposition follows.
 \end{proof}

Returning to the proof of the lemma, we fix
$\eps>0$ and prove that
$\Ent{4\eps}{\rho}$ is finite. For large $n$,
we have $\mnorm{\rho_n-\rho}<\eps^2/2$. By the definition of the
m-norm, this means that there exists a semimetric $p$ such that
$\Lnorm{p}<\eps^2/2$ and $\rho \leq p+ \rho_n$ almost everywhere. As
we have just proved, the set $X$ can be partitioned into two sets
$X_0$ and $X_1$ such that $\mu(X_0)\leq \eps$ and $p(x,y)\leq \eps$
for all $x,y \in X_1$. Choosing $n$ large enough, we may assume that
the number $\Ent{\eps}{\rho_n}$ is finite, i.e., we can find a partition
$X=A_0\cup A_1 \cup \dots \cup A_k$ such that $\mu(A_0)<\eps$ and
$\dm{\rho_n}{A_j}< \eps$ for $j\geq 1$.

Now we construct a partition for the semimetric
$\rho$ as follows. Put
$B_0=A_0 \cup X_0$ and $B_j=A_j \cap X_1$ for $j=1,\dots,k$. Clearly,
$\mu(B_0)\leq \mu(A_0)+\mu(X_0)< 2\eps$. For every $j>0$, for almost all
$x,y\in B_j$, we have the inequality
$\rho(x,y)\leq \rho_n(x,y)+p(x,y)\leq \eps +\eps=2\eps$, which shows
that $\esdm{\rho}{B_j}\leq 2\eps$. Thus we have shown that for every
$\eps>0$ the number $\Ent{4\eps}{\rho}$ is finite and, consequently,
that the semimetric $\rho$ is admissible.
\end{proof}

The following simple lemma says that the limit of a sequence of
``uniformly bounded'' admissible semimetrics in the space
$L^1(X^2)$ is again an admissible semimetric. The boundedness here is
understood in the entropy sense.

\begin{lemma}\label{lem3}
If $M$ is a set of admissible semimetrics such that
the set $\{\Ent{\eps}{\rho,\mu}\colon\rho \in M\}$ is bounded for every
$\eps>0$,
then the closure of $M$ in the space
$L^1(X^2)$ consists of admissible semimetrics only.
\end{lemma}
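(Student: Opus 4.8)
The plan is to verify admissibility of the limit via condition~5) of Theorem~\ref{kritdop}, i.e. to show that for every $\eps>0$ the space $X$ can be split into finitely many pieces of small $\rho$-essential diameter plus an exceptional set of small measure. First I would fix a sequence $\rho_n\in M$ with $\Lnorm{\rho_n-\rho}\to 0$; since each $\rho_n$ is a semimetric and a subsequence converges almost everywhere, the limit is an almost semimetric, so by Theorem~\ref{ispravlenie} we may assume $\rho$ is an honest semimetric and it remains to bound its $\eps$-entropy. The crucial difference from Lemma~\ref{shod} is that here we only have $L^1$-convergence, not convergence in the m-norm; consequently we \emph{cannot} dominate $g_n:=|\rho_n-\rho|$ by a semimetric of small $L^1$-norm and apply Proposition~\ref{prop1}. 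Instead the error $g_n$ must be controlled directly, and since $g_n$ is not a semimetric we have no grip on pairwise values $g_n(y,z)$.

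The workaround is to route all distances through a single reference point in each piece, so that only the one-variable sections $g_n(x_j,\cdot)$ enter (these are controllable by Fubini and Chebyshev). Concretely, fix $\eps>0$ and use the \emph{uniform} entropy bound: there is $k=k(\eps)$, independent of $n$, so that each $\rho_n$ admits a partition $X=A_0\cup A_1\cup\dots\cup A_k$ with $\mu(A_0)<\eps$ and $\dm{\rho_n}{A_j}<\eps$ for $j\ge1$. I would then choose $\delta:=\eps^3/k$ and take $n$ with $\Lnorm{g_n}<\delta$, writing $g:=g_n$. Pieces with $\mu(A_j)<\eps/k$ (at most $k$ of them) are discarded into the exceptional set, contributing total measure less than $\eps$; call the rest the kept pieces, all satisfying $\mu(A_j)\ge\eps/k$. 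For each kept piece, since the average over $x\in A_j$ of $\int_{A_j}g(x,y)\,d\mu(y)$ equals $\delta_j/\mu(A_j)$ with $\delta_j:=\int_{A_j\times A_j}g$, I can pick a reference point $x_j\in A_j$ with $\int_{A_j}g(x_j,y)\,d\mu(y)\le\delta_j/\mu(A_j)$. Setting $B_j:=\{y\in A_j:g(x_j,y)\ge\eps\}$, Chebyshev gives $\mu(B_j)\le\delta_j/(\eps\mu(A_j))\le k\delta_j/\eps^2$, whence $\sum_j\mu(B_j)\le (k/\eps^2)\sum_j\delta_j\le k\delta/\eps^2=\eps$ by the choice of $\delta$ and $\sum_j\delta_j\le\Lnorm{g}$.

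Now define the new partition for $\rho$: let $C_0$ be $A_0$ together with the discarded small pieces and all the $B_j$, and let $C_j:=A_j\setminus B_j$ for the kept pieces; the measure bound above yields $\mu(C_0)<3\eps$, and there are only finitely many $C_j$. For the diameter estimate I would use the triangle inequality for $\rho$ and the pointwise bound $\rho\le\rho_n+g$: for almost all $y,z\in C_j$, $\rho(y,z)\le\rho(y,x_j)+\rho(x_j,z)\le(\rho_n(y,x_j)+g(x_j,y))+(\rho_n(x_j,z)+g(x_j,z))<4\eps$, since $y,z,x_j\in A_j$ force the $\rho_n$-terms below $\eps$ while $y,z\notin B_j$ force the $g$-terms below $\eps$ (using symmetry of $g$). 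Hence $\esdm{\rho}{C_j}\le4\eps$. Rescaling $\eps\mapsto\eps/4$ shows that condition~5) of Theorem~\ref{kritdop} holds for $\rho$, so $\rho$ is an admissible semimetric.

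The main obstacle, as indicated, is precisely that $L^1$-closeness of $\rho_n$ to $\rho$ gives no dominating semimetric, so one cannot reuse the clean Proposition~\ref{prop1}/Lemma~\ref{shod} argument; the two ideas that resolve it are (i) passing through one reference point per piece so that only the single-variable error $g(x_j,\cdot)$ needs controlling, and (ii) discarding the boundedly many low-measure pieces so that the denominators $\mu(A_j)$ in the Chebyshev estimate stay bounded below by $\eps/k$, keeping $\sum_j\mu(B_j)$ small. Everything else is routine; I would only take mild care that the reference point $x_j$ and the triangle/domination inequalities are invoked at almost-every point, which suffices since the final conclusion is phrased in terms of essential diameter.
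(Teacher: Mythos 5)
Your proposal is correct, but it follows a genuinely different route from the paper's own proof. The paper argues by contradiction via condition~6) of Theorem~\ref{kritdop}: if the limit $\rho$ were not admissible, there would exist $\eps>0$ and a set $A$ with $\mu(A)\geq\eps$ on which $\rho\geq\eps$ almost everywhere; intersecting $A$ with a $k$-piece $\eps/2$-partition for $\rho_n$ (with $k$ uniform in $n$, which is exactly where the entropy hypothesis enters), some piece $X_i\cap A$ has measure at least $\eps/(2k)$, and on it $\rho-\rho_n\geq\eps/2$ a.e., forcing $\Lnorm{\rho-\rho_n}\geq(\eps/2k)^2\,\eps/2>0$ for all $n$ --- contradicting $L^1$-convergence. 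You instead give a direct construction verifying condition~5): from a partition adapted to a single nearby $\rho_n$ you build an explicit partition for $\rho$ itself, discarding the low-measure pieces and the Chebyshev-bad sets $B_j$, and routing every distance through one reference point $x_j$ per piece so that only the one-variable sections $g(x_j,\cdot)$ of the error need to be controlled; this correctly circumvents the fact that, unlike in Lemma~\ref{shod}, the error $|\rho_n-\rho|$ is not dominated by a small semimetric (your device is essentially the reference-point argument of Proposition~\ref{prop1}, inlined piece by piece because $g$ has no triangle inequality). The paper's argument buys brevity; yours buys a quantitative strengthening: it shows that the uniform entropy bound itself passes to the closure (the limit admits a partition into at most $k(\eps)$ pieces of essential diameter $4\eps$ plus a set of measure $3\eps$), which is close to the entropy assertion of Corollary~\ref{cor2} rather than bare admissibility. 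Your bookkeeping --- choosing $x_j$ by Fubini from a positive-measure set so that the section is measurable and its integral is at most the average, using symmetry of $g$, and relying on honest diameters from Definition~\ref{kolm} while concluding only with essential diameter --- is all sound.
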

\begin{proof}
Take an arbitrary function
$\rho$ from the closure of $M$ in
$L^1$. We will prove that it is an admissible semimetric. We know
that there exists a sequence of semimetrics
$\{\rho_n\}\subset M$ that converges to $\rho$ in $L^1(X^2)$. Clearly,
$\rho$ is a semimetric, and one should only check that it is admissible.

Assume to the contrary that $\rho$ is not admissible. Then, by
Theorem~\ref{kritdop}, there exist $\eps>0$ and a set $A\subset X$ of
positive measure such that $\rho(x,y)\geq \eps$ for almost all $x,y \in A$.
Decreasing $\eps$ if necessary, we may assume that
$\mu(A)\geq \eps$.

Using the boundedness of entropies for
$\eps/2$, for each of the semimetrics $\rho_n$ we find a partition
$X=X_0\cup X_1\cup \dots \cup X_k$ such that $\dm{\rho_n}{X_i}\leq \eps/2$
for all $i=1,{\ldots} ,k$ and $\mu(X_0)\leq \eps/2$. Of course, this partition
may depend on $n$, but the number $k$ can be chosen
to be universal, since the entropies are bounded. Note that at least
one of the sets
$(X_i\cap A)$, $i\geq 1$, has measure not less than $\frac{\eps}{2k}$.
Moreover, for almost all
$x,y\in(X_i\cap A)$, we have
$$\rho(x,y)-\rho_n(x,y)\geq \eps - \frac{\eps}{2}= \frac{\eps}{2},$$
whence
$$\Lnorm{\rho-\rho_n}\geq (\frac{\eps}{2k})^2 \frac{\eps}{2}>0.$$
The latter inequality contradicts the convergence of
$\rho_n$ to $\rho$ in $L^1(X^2)$, and the lemma follows.
\end{proof}

In conclusion of this section, we prove a lemma on pointwise
convergence of admissible semimetrics.

\begin{lemma}
\label{point}
Assume that a sequence of admissible semimetrics
$\rho_n$ converges to an admissible semimetric
$\rho_{\rm lim}$ almost everywhere with respect to the measure
$\mu\times \mu$. Then there exists a set $X'\subset X$ of full measure
such that for any $x,y\in X'$,
$$\limsup\limits_n \rho_n(x,y)=\rho_{\rm lim}(x,y).$$
Besides, if $x,y\in X'$ and $\rho_{\rm lim}(x,y)=0$, then
$$\lim\limits_n \rho_n(x,y)=0.$$
\end{lemma}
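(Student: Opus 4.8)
The plan is to produce a single full-measure set $X'$ on which the $\limsup$-identity holds for \emph{every} pair, by intersecting three full-measure sets and then running a triangle-inequality sandwich whose reference points are supplied by the admissibility of $\rho_{\rm lim}$. First I would fix the data. By hypothesis there is a symmetric set $G\subset X\times X$ of full $\mu\times\mu$-measure on which $\rho_n\to\rho_{\rm lim}$ pointwise; write $G_x=\{z:(x,z)\in G\}$ for its sections. By Fubini's theorem the set $X_1=\{x:\mu(G_x)=1\}$ has full measure. Since $\rho_{\rm lim}$ is admissible, condition~4) of Theorem~\ref{kritdop} provides a full-measure set $X_2$ such that for every $x\in X_2$ and every $\eps>0$ the ball $B_\eps(x)=\{z:\rho_{\rm lim}(x,z)<\eps\}$ has positive measure. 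Finally, by the correction convention following Theorem~\ref{ispravlenie} I may pass to a full-measure set $X_1'$ on which all $\rho_n$ and $\rho_{\rm lim}$ are honest semimetrics, so that the triangle inequality holds for every triple of points of $X_1'$ (a countable intersection, hence still of full measure). Put $X'=X_1\cap X_2\cap X_1'$; this is the desired full-measure set.

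The heart of the argument is the following sandwich, carried out for fixed $x,y\in X'$ and fixed $\eps>0$. For the upper bound, observe that $B_\eps(x)\cap G_x\cap G_y\cap X_1'$ is the intersection of a set of positive measure (since $x\in X_2$) with sets of full measure (since $x,y\in X_1$), hence is nonempty; choose a point $z$ in it. Then $\rho_n(x,z)\to\rho_{\rm lim}(x,z)$ and $\rho_n(z,y)\to\rho_{\rm lim}(z,y)$, and the triangle inequality $\rho_n(x,y)\le\rho_n(x,z)+\rho_n(z,y)$ gives $\limsup_n\rho_n(x,y)\le\rho_{\rm lim}(x,z)+\rho_{\rm lim}(z,y)$. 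As $\rho_{\rm lim}(x,z)<\eps$ and $\rho_{\rm lim}(z,y)<\rho_{\rm lim}(x,y)+\eps$ by the triangle inequality for $\rho_{\rm lim}$, letting $\eps\to0$ yields $\limsup_n\rho_n(x,y)\le\rho_{\rm lim}(x,y)$. For the lower bound I instead select $w\in B_\eps(y)\cap G_x\cap G_y\cap X_1'$ (nonempty for the same reason, now using $y\in X_2$); from $\rho_n(x,w)\le\rho_n(x,y)+\rho_n(y,w)$ and the convergence of both of its terms I obtain $\liminf_n\rho_n(x,y)\ge\rho_{\rm lim}(x,w)-\rho_{\rm lim}(y,w)\ge\rho_{\rm lim}(x,y)-2\eps$, so that $\limsup_n\rho_n(x,y)\ge\rho_{\rm lim}(x,y)$ as well.

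Combining the two bounds gives $\limsup_n\rho_n(x,y)=\rho_{\rm lim}(x,y)$ for all $x,y\in X'$, which is the first assertion; the second is then immediate, since $\rho_{\rm lim}(x,y)=0$ forces $\limsup_n\rho_n(x,y)=0$, and $\rho_n\ge0$ upgrades this to $\lim_n\rho_n(x,y)=0$. I expect the main obstacle to be conceptual rather than computational: plain almost-everywhere convergence on $X\times X$ gives no information on any fixed horizontal or vertical slice, each of which is $\mu\times\mu$-null, so one cannot simply restrict the convergence to $X'\times X'$. The function of the admissibility of $\rho_{\rm lim}$ is precisely to defeat this ``measure-zero slice'' problem: it guarantees that arbitrarily close to any $x\in X'$ there are reference points lying in the genuine convergence slices of \emph{both} $x$ and $y$, and the triangle inequality then transports the known convergence at those reference points to the pair $(x,y)$. (Note that the lower-bound step in fact controls $\liminf$, so the very same argument yields honest convergence $\lim_n\rho_n(x,y)=\rho_{\rm lim}(x,y)$ throughout $X'\times X'$, of which the stated $\limsup$-identity is a special case.)
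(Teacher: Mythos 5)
Your proof is correct, but it follows a genuinely different route from the paper's. The paper works with the single function $\bar{\rho}(x,y)=\limsup_n\rho_n(x,y)$: it observes that $\bar{\rho}$ satisfies the triangle inequality everywhere (an upper limit of semimetrics is a semimetric), is finite on the square of a full-measure set, coincides a.e.\ with $\rho_{\rm lim}$ and is therefore admissible by the ball criterion of Theorem~\ref{kritdop}, and then it invokes an external result (Theorem~3 of \cite{PZ}) asserting that two admissible semimetrics which coincide a.e.\ on $X^2$ must coincide on the square of a full-measure set. You instead run a self-contained sandwich: Fubini gives full-measure sections of the convergence set, condition~4) of Theorem~\ref{kritdop} supplies reference points $z$ (resp.\ $w$) arbitrarily $\rho_{\rm lim}$-close to $x$ (resp.\ $y$) lying in the convergence slices of both $x$ and $y$, and the triangle inequality transports convergence to the pair $(x,y)$ from above and from below. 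Two remarks on the comparison. First, your argument actually proves the stronger conclusion $\lim_n\rho_n(x,y)=\rho_{\rm lim}(x,y)$ on all of $X'\times X'$; the paper's method cannot see this, because $\liminf_n\rho_n$, unlike $\limsup_n\rho_n$, need not satisfy the triangle inequality, so the $\bar{\rho}$-trick is intrinsically one-sided --- this is presumably why the lemma is stated for $\limsup$ only. Second, your proof avoids the citation of \cite{PZ} entirely (in effect you reprove the special case of it that is needed), at the cost of being longer; the paper's proof is shorter but leans on that reference and on the transfer of admissibility to $\bar{\rho}$. One cosmetic point: since admissible semimetrics are by definition genuine semimetrics, your set $X_1'$ and the appeal to the correction theorem are redundant (the triangle inequality already holds everywhere); this is harmless but could be deleted.
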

\begin{proof}
Consider the function $\bar{\rho}(x,y)=\limsup\limits_{n}
\rho_n(x,y)$. The functions $\bar{\rho}$ and $\rho_{\rm lim}$ coincide on
a set of full measure in $X^2$. We must prove that they coincide on
the square of a set $X'$ of full measure in $X$. Note that the function
$\bar{\rho}$ satisfies the triangle inequality everywhere (as upper limit
of semimetrics); also it is finite almost
everywhere with respect to the measure
$\mu^2$, because the function $\rho_{\rm lim}$ is finite a.e. Put
$X''=\{x \in X \colon \mu(\{y \colon \bar{\rho}(x,y)=+\infty\})=0\}$. Note
that $\mu(X'')=1$. We will prove that $\bar{\rho}(x,y)<+\infty$ for any $x,y \in X''$. Indeed, if
$\bar{\rho}(x,y)=+\infty$, then for every $z \in X$ we have either
$\bar{\rho}(x,z)=+\infty$ or $\bar{\rho}(y,z)=+\infty$, contradicting
the choice of $X''$. Thus on  $X''$ the semimetric $\bar{\rho}$
is finite and coincides almost everywhere with
$\rho_{\rm lim}$. Using the characterization of admissibility in terms of
the measures of balls from Theorem~\ref{kritdop} for the semimetrics
$\rho_{\rm lim}$ and $\bar{\rho}$, we see that
$\bar{\rho}$ is also admissible. Then, by \cite[Theorem~3]{PZ}, there
exists a set $X'\subset X''$ of full measure such that
$\rho_{\rm lim}=\bar{\rho}$ on the square of $X'$.

The last claim is obvious.
\end{proof}

\subsection{Convergence of admissible metrics. A precompactness criterion}

\begin{lemma}
\label{blim}
Assume that a sequence of uniformly bounded semimetrics
$\rho_n$ converges to an admissible semimetric
$\rho$ in $L^1$. Then this sequence converges in the
m-norm to the same limit.
\end{lemma}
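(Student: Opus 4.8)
The plan is to bound $\mnorm{\rho_n-\rho}$ directly from the definition of the m-norm: for each $n$ I will construct a single semimetric $P_n$ with $P_n\ge|\rho_n-\rho|$ almost everywhere and $\Lnorm{P_n}$ arbitrarily small for large $n$, so that $\mnorm{\rho_n-\rho}\le\Lnorm{P_n}\to0$. The central difficulty is precisely that the m-norm asks for domination by a \emph{semimetric}: the difference $|\rho_n-\rho|$ is not itself a semimetric, and $L^1$-smallness alone is useless here, since the indicator of a small-measure set (e.g.\ a thin band) generally cannot be dominated by any semimetric of small $L^1$-norm. What saves the situation is that each $\rho_n$ \emph{is} a semimetric, so its triangle inequality can be averaged, and that this can be combined with the admissible structure of the limit $\rho$ and with the uniform bound.

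Fix $\eps>0$. Since $\rho$ is admissible, Theorem~\ref{kritdop} gives a \emph{finite} partition $X=X_0\sqcup Y_1\sqcup\dots\sqcup Y_k$ with $\mu(X_0)<\eps$ and $\esdm{\rho}{Y_j}<\eps$. I keep this partition fixed and only afterwards let $n\to\infty$; the point is that $m:=\min_j\mu(Y_j)$ is then a fixed positive constant. For $x$ lying in a cell $Y_j$ set $H_n(x)=\frac{1}{\mu(Y_j)}\int_{Y_j}\rho_n(x,z)\,d\mu(z)$ and $H(x)=\frac{1}{\mu(Y_j)}\int_{Y_j}\rho(x,z)\,d\mu(z)$, and put $H_n=H=0$ on $X_0$. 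Averaging the triangle inequality for $\rho_n$ over a cell shows $\rho_n(x,y)\le H_n(x)+H_n(y)$ whenever $x,y$ lie in the same cell; more generally, writing $\bar\rho_n(i,j)$ and $\bar\rho(i,j)$ for the averages of $\rho_n$, $\rho$ over $Y_i\times Y_j$, the same averaging gives $|\rho_n(x,y)-\bar\rho_n(i,j)|\le H_n(x)+H_n(y)$ and $|\rho(x,y)-\bar\rho(i,j)|\le H(x)+H(y)$ for $x\in Y_i$, $y\in Y_j$. Each expression of the shape $h(x)+h(y)$ is automatically a semimetric, and $\esdm{\rho}{Y_j}<\eps$ forces $H\le\eps$ a.e., whence $\Lnorm{H(x)+H(y)}=2\int H\le2\eps$. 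For $H_n$ one computes $\int H_n=\sum_j\frac{1}{\mu(Y_j)}\int_{Y_j^2}\rho_n\le\eps+\frac{1}{m}\Lnorm{\rho_n-\rho}$, using $\int_{Y_j^2}\rho\le\eps\,\mu(Y_j)^2$; hence $\limsup_n\Lnorm{H_n(x)+H_n(y)}\le2\eps$.

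It remains to assemble the bound. On pairs inside one cell, $|\rho_n-\rho|\le\rho_n+\rho\le(H_n(x)+H_n(y))+\eps$. On pairs in distinct cells $Y_i,Y_j$, the three-term split above yields $|\rho_n-\rho|\le(H_n(x)+H_n(y))+(H(x)+H(y))+E_{ij}$, where $E_{ij}=|\bar\rho_n(i,j)-\bar\rho(i,j)|$. The block-constant error $E_{J(x),J(y)}$ is dominated by the semimetric $F_n(x)+F_n(y)$ with $F_n(x)=\max_{j}E_{J(x),j}$, and the Chebyshev-type estimate $\max_j E_{ij}\le m^{-1}\sum_j E_{ij}\mu(Y_j)$ gives $\Lnorm{F_n(x)+F_n(y)}\le\frac{2}{m}\Lnorm{\rho_n-\rho}\to0$. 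Finally the pairs meeting $X_0$ are handled by uniform boundedness: if $\rho_n,\rho\le M$, then $|\rho_n-\rho|\le M$ there, and this is dominated by the semimetric $g(x)+g(y)$ with $g=M\mathbf 1_{X_0}$, of $L^1$-norm $2M\mu(X_0)<2M\eps$; the leftover constant $\eps$ is dominated by the semimetric identically $\eps$ off the diagonal. Summing these five semimetrics produces the desired $P_n\ge|\rho_n-\rho|$ with $\limsup_n\Lnorm{P_n}\le(5+2M)\eps$, and since $\eps$ was arbitrary, $\mnorm{\rho_n-\rho}\to0$. The main obstacle, and the step I would be most careful about, is the interchange of limits hidden in the $m^{-1}$ factors: these blow up as cells shrink, so it is essential to freeze the finite $\eps$-partition first and only then send $n\to\infty$, and to use the uniform bound (unavailable for general $L^1$-convergent semimetrics) in order to absorb the exceptional set $X_0$.
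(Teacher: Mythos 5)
Your proof is correct, and it follows a genuinely different route from the paper's. Both arguments share the same skeleton: fix a finite $\eps$-partition $X=X_0\sqcup Y_1\sqcup\dots\sqcup Y_k$ coming from the admissibility of $\rho$, keep it frozen while $n\to\infty$ so that $m=\min_j\mu(Y_j)>0$ is a fixed constant, and use the uniform bound only to absorb the exceptional set. But the core mechanisms differ. The paper applies Proposition~\ref{prop1} (a Chebyshev-type selection) to $\rho_n$ restricted to each cell, producing good sets $C_j(n)$ of large relative measure on which $\rho_n$ has small diameter; it then runs a contradiction argument (if $|\rho_n-\rho|$ exceeded $10\eps$ at one pair of good points, it would exceed $4\eps$ on a whole product $C_i(n)\times C_j(n)$ of measure $\geq(1-2\eps)^2\delta^2$, contradicting $L^1$-convergence) to get a \emph{uniform} bound on $|\rho_n-\rho|$ over $C(n)^2$, and finally dominates by a two-valued block semimetric. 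You avoid both the good-set extraction and the contradiction step: by averaging the triangle inequality over cells you dominate $|\rho_n-\rho|$ directly by a sum of ``additive'' semimetrics $h(x)+h(y)$ built from the conditional averages $H_n,H$, the block-average errors $F_n$, the bump $g=M\mathbf{1}_{X_0}$, and a constant, with every error term explicitly controlled by $\eps$ or by $m^{-1}\Lnorm{\rho_n-\rho}$. What each approach buys: the paper's proof recycles Proposition~\ref{prop1} (needed again in Theorem~\ref{crit}) and its good-set picture recurs later; yours is fully constructive and quantitative, giving an explicit estimate of the shape $\mnorm{\rho_n-\rho}\leq(5+2M)\eps+\frac{4}{m}\Lnorm{\rho_n-\rho}$, and it isolates transparently that uniform boundedness enters only through the set $X_0$. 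Two cosmetic points, at the same level of rigor as the paper itself: your semimetrics $h(x)+h(y)$ do not vanish on the diagonal (neither does the paper's $p_n$), which is harmless modulo null sets for a continuous measure; and cells of zero measure should be merged into $X_0$ so that $m>0$, just as the paper assumes $\delta>0$.
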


\begin{proof}
Let $R$ be a constant bounding all semimetrics
$\rho_n, \rho$. Fix $\eps>0$ and, using the admissibility of
$\rho$, find a partition of the space $X$ into sets
$A_0, A_1,\dots,A_k$ such that $\mu(A_0)<\eps$ and $\dm{\rho}{A_j}\leq
\eps^2$ for $j>0$.
We may assume that
 $$\delta=\min\{\mu(A_j) \colon j=1,{\ldots} ,k\}>0.$$
Note that for every $j>0$ the sequence of restricted semimetrics
$\rho_n|_{_{A_j^2}}$ converges to the semimetric $\rho|_{_{A_j^2}}$
in the space $L^1(A_j^2)$. By construction, the limit semimetric does not exceed
$\eps^2$ everywhere on $A_j$, hence for sufficiently large $n$ we have
 $$\|\rho_n|_{_{A_j^2}}\|_{_{L^1(A_j^2)}}\leq 2\eps^2 \mu(A_j)^2.$$

Now consider the set $A_j$ equipped with the normalized measure $\mu/\mu(A_j)$
and apply Proposition~\ref{prop1} to the restriction of the semimetric
$\rho_n$ to $A_j$. We see that
$A_j$ can be partitioned into two sets $B_j(n), C_j(n)$ such that
$\mu(B_j(n))\leq 2\eps \mu(A_j)$ and $\dm{\rho_n}{C_j(n)}\leq 2\eps$.
This immediately implies that $\mu(C_j(n))\geq (1-2\eps)\mu(A_j)$.

Choose $n$ so large that these inequalities hold for all
$j=1,{\ldots} ,k$. We put $C(n)=\bigcup\limits_{j=1}^k C_j(n)$ and
prove that if $n$ is sufficiently large, then
$|\rho_n(u,v)-\rho(u,v)|\leq 10 \eps$ for any
$u,v\in C(n)$. If $u,v\in C_j(n)$ for some $j$, then
$|\rho_n(u,v)-\rho(u,v)|\leq \rho_n(u,v)+\rho(u,v)\leq 3\eps$  by construction. Now let
$u_0 \in C_i(n)$, $v_0 \in C_j(n)$, and $i\ne j$. If
$|\rho_n(u_0,v_0)-\rho(u_0,v_0)|>10\eps$, then for all
$u\in C_i(n)$, $v \in C_j(n)$ we have
\begin{eqnarray*}
|\rho_n(u,v)-\rho(u,v)|\geq |\rho_n(u_0,v_0)-\rho(u_0,v_0)|\\- (\rho_n(u_0,u)+\rho_n(v_0,v)+\rho(u_0,u)+\rho(v_0,v))>4\eps.
\end{eqnarray*}
But then
$\Lnorm{\rho_n-\rho}\geq 4\eps \mu(C_i)\mu(C_j)\geq 4 \eps (1-2\eps)^2\delta^2$,
which cannot be true for large $n$. Thus for all sufficiently large
$n$, for any two points
$u,v \in C(n)$ we have $|\rho_n(u,v)-\rho(u,v)|\leq 10 \eps$. It follows
from the construction that the measure of
$C(n)$ is large, more exactly,
$\mu(C(n))\geq (1-2\eps)(1-\eps)$.

Define a metric $p_n$ as follows. On the set
$C(n)\times C(n)$ it is identically equal to $10\eps$, and on the
remaining set it is equal to
$2R+10\eps$. We have just proved that on
$C(n)$ this metric dominates the difference
$|\rho_n-\rho|$. On the remaining set, it also dominates the
distance, because all original semimetrics are bounded by $R$. Since
$R$ is fixed, for sufficiently small
$\eps$ the metric $p_n$ has an arbitrarily small $L^1$ norm. Thus the
sequence $\rho_n$ converges to $\rho$ in the m-norm, and the lemma
follows.
\end{proof}

In what follows, we need a lemma on cut-offs of semimetrics.

Given an arbitrary function $f$ and a real number $R$, denote by $f^R$
the cut-off of $f$ of level $R$, that is,
$f^R(\cdot)=\min(f(\cdot),R)$.

\begin{lemma}
\label{cut}
For a summable semimetric $p$ on the space
$(X, \mu)$ and every $R>0$,
$$\mnorm{p-p^{2R}} \leq 2 \int\limits_{p>R}p d\mu^2.$$
\end{lemma}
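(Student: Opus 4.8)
The plan is to produce a single explicit semimetric $\rho$ that dominates $p-p^{2R}$ almost everywhere and whose $L^1$-norm is at most $2\int_{p>R}p\,d\mu^2$; the bound on $\mnorm{p-p^{2R}}$ then follows at once from the definition of the m-norm. The conceptual difficulty to keep in mind is that $p-p^{2R}=\max(p-2R,0)$ vanishes where $p\le 2R$ but grows like $p$ where $p$ is large, so it is not a semimetric; moreover, any majorant of the form $\phi\circ p$ that \emph{is} a semimetric forces $\phi$ to be concave through the origin and hence $\phi(t)\ge t$, so such a majorant costs at least $\int p\,d\mu^2$, which is far larger than the tail $\int_{p>R}p\,d\mu^2$. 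Thus the dominating semimetric must genuinely exploit the geometry of the space rather than be a function of $p$ alone.

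The construction I would use is a ``hub'' (one-point completion) semimetric. Put
$$q(x)=\int_X \max\big(p(x,w)-R,\,0\big)\,d\mu(w),\qquad \rho(x,y)=\min\big(p(x,y),\,q(x)+q(y)\big).$$
First I would record that $q$ is nonnegative, measurable, and finite for a.e.\ $x$ (since $q(x)\le\int_X p(x,w)\,d\mu(w)<\infty$ by Fubini and summability of $p$), and, crucially, that $q$ is $1$-Lipschitz with respect to $p$: because $t\mapsto\max(t-R,0)$ is $1$-Lipschitz, $|q(x)-q(x')|\le\int_X|p(x,w)-p(x',w)|\,d\mu(w)\le p(x,x')$, using $\mu(X)=1$. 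The key structural step is then that $\rho$ is a semimetric. Symmetry and $\rho(x,x)=0$ are immediate, and the triangle inequality is checked by cases according to which term realizes $\rho(x,y)$ and $\rho(y,z)$: if both are realized by $p$ one uses the triangle inequality for $p$; if both by $q(x)+q(y)$ one uses $q\ge0$; in the single mixed case, where $\rho(x,y)=p(x,y)$ and $\rho(y,z)=q(y)+q(z)$, one needs $p(x,y)+q(y)\ge q(x)$, which is exactly the $1$-Lipschitz bound. This mixed case is the main obstacle and the reason $q$ must be taken Lipschitz; it also explains why the more obvious weight $\int_X p(x,w)\,\mathbf 1[p(x,w)>R]\,d\mu(w)$ cannot be used, since the discontinuous truncation destroys the Lipschitz property.

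It remains to verify domination and cost. For domination, where $p(x,y)\le 2R$ the function $p-p^{2R}$ is $0\le\rho$; where $p(x,y)=D>2R$, the pointwise inequality $\max(p(x,w)-R,0)+\max(p(y,w)-R,0)\ge p(x,w)+p(y,w)-2R\ge D-2R$ (the last step by the triangle inequality for $p$) integrates over $w$ to give $q(x)+q(y)\ge D-2R$, whence $\rho(x,y)=\min(D,\,q(x)+q(y))\ge D-2R=p-p^{2R}$. For the cost I would estimate, using Fubini and $\mu(X)=1$,
$$\Lnorm{\rho}\le\int_{X^2}\big(q(x)+q(y)\big)\,d\mu^2=2\int_X q\,d\mu=2\int_{X^2}\max(p-R,0)\,d\mu^2=2\int\limits_{p>R}(p-R)\,d\mu^2\le 2\int\limits_{p>R}p\,d\mu^2.$$
Since $\rho$ is an admissible competitor in the infimum defining the m-norm and dominates $|p-p^{2R}|=p-p^{2R}$, this yields $\mnorm{p-p^{2R}}\le\Lnorm{\rho}\le 2\int_{p>R}p\,d\mu^2$, as claimed (in fact with the slightly sharper right-hand side $2\int_{p>R}(p-R)\,d\mu^2$).
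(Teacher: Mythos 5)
Your proof is correct, but it takes a genuinely different route from the paper's. The paper fixes a center $x\in X$, splits $X$ into the ball $B=\{y\colon p(x,y)\leq R\}$ and its complement $A$, and uses the ``collapse'' semimetric that is $0$ on $B\times B$, equals $p$ on $A\times A$, and equals the distance to the center, $p(\cdot,x)$, across the two blocks; domination of $p-p^{2R}$ and the bound $\Lnorm{q}\leq 2\int_A p(u,x)\,d\mu(u)$ are checked blockwise, and the tail bound is then obtained \emph{existentially}, by averaging this estimate over the choice of the center $x$ and picking an $x$ no worse than the mean. You instead build a single point-free ``hub'' semimetric $\min\bigl(p(x,y),\,q(x)+q(y)\bigr)$ with the $1$-Lipschitz potential $q(x)=\int_X\max(p(x,w)-R,0)\,d\mu(w)$, so the averaging that the paper performs over centers is absorbed into the definition of $q$ itself. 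The trade-off: the paper's blockwise verification is more elementary (no Lipschitz lemma needed, just the triangle inequality on four cases of a partition), while your construction removes the choice of a good center, isolates a reusable general fact (a minimum of a semimetric with $q(x)+q(y)$ for nonnegative $1$-Lipschitz $q$ is again a semimetric), and yields the slightly sharper bound $2\int_{p>R}(p-R)\,d\mu^2$. Both arguments ultimately rest on the same mechanism: exhibiting one explicit dominating semimetric and invoking the definition of the m-norm.
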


\begin{proof}
Choosing an arbitrary point $x \in X$, consider the ball
$B=\{y\in X\colon p(x,y)\leq R\}$ and its complement $A=X\setminus B$.
Now we define a semimetric $q$ as follows:
$$
q(u,v)=
\begin{cases}
0,& u,v \in B,\\
p(u,v),& u,v \in A,\\
p(u,x), & u \in A, v \in B, \\
p(v,x), & u \in B, v \in A. \\
\end{cases}
$$
One can easily check that $q$ is indeed a semimetric and, besides,
for any $u,v \in X$ we have $p(u,v)-p^{2R}(u,v)\leq q(u,v)$. Thus, by
the definition of the m-norm,
\begin{eqnarray*}
\mnorm{p-p^{2R}}&\leq& \Lnorm{q}=\int\limits_{X\times X} q
d\mu^2=\big(\int\limits_{A\times A}+\int\limits_{A\times
B}+\int\limits_{B\times A}+\int\limits_{B\times B}\big) q d\mu^2\\
&=&\int\limits_{A\times A} p(u,v) d\mu(u)d\mu(v)+2 \int\limits_{A\times
B} p(u,x) d\mu(u)d\mu(v)\\
&\leq&\int\limits_{A\times A}( p(u,x)+p(x,v)) d\mu(u)d\mu(v)+ 2
\mu(B)\int\limits_{A} p(u,x) d\mu(u)\\
&=&2(\mu(A)+\mu(B))\int\limits_{A} p(u,x) d\mu(u)=2\int\limits_{A} p(u,x) d\mu(u).
\end{eqnarray*}
Now we can optimize this bound by choosing $x$. Note that the average
of the right-hand side  over
$x\in X$  coincides
with
$2\int\limits_{p>R}p d\mu^2$; hence, choosing $x$ appropriately, we
obtain the desired bound.
\end{proof}

We use this lemma to deduce a more general theorem.

\begin{theorem}\label{lem4}
Assume that a sequence of semimetrics
$\rho_n$ converges to an admissible semimetric $\rho$ in the space $L^1$.
Then this sequence converges in the m-norm to the same limit.
\end{theorem}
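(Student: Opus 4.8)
The plan is to reduce the general (unbounded) case to the uniformly bounded case already settled in Lemma~\ref{blim}, using the cut-off estimate of Lemma~\ref{cut} to discard the contribution of large distances. Fix a truncation level $R>0$. Since $t\mapsto\min(t,2R)$ is $1$-Lipschitz, the truncations $\rho_n^{2R}$ and $\rho^{2R}$ are again semimetrics, bounded by $2R$, and the triangle inequality for the m-norm gives
$$\mnorm{\rho_n-\rho}\leq \mnorm{\rho_n-\rho_n^{2R}}+\mnorm{\rho_n^{2R}-\rho^{2R}}+\mnorm{\rho^{2R}-\rho}.$$
By Lemma~\ref{cut} the first and third terms are controlled by the tail integrals $2\int_{\rho_n>R}\rho_n\,d\mu^2$ and $2\int_{\rho>R}\rho\,d\mu^2$, so everything comes down to handling these tails uniformly while the middle term is treated for fixed $R$.

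First I would dispose of the middle term for each fixed $R$. Because $t\mapsto\min(t,2R)$ is $1$-Lipschitz we have $|\rho_n^{2R}-\rho^{2R}|\leq|\rho_n-\rho|$ pointwise, so $\rho_n^{2R}\to\rho^{2R}$ in $L^1(X^2)$. The truncation $\rho^{2R}$ is still admissible: for every radius $\eps\leq 2R$ its $\eps$-ball around a point coincides with the $\eps$-ball of $\rho$, hence the positivity-of-ball-measures criterion~4) of Theorem~\ref{kritdop} is inherited (for $\eps>2R$ the ball is all of $X$). Thus $\{\rho_n^{2R}\}$ is a sequence of semimetrics uniformly bounded by $2R$ converging in $L^1$ to the admissible limit $\rho^{2R}$, and Lemma~\ref{blim} yields $\mnorm{\rho_n^{2R}-\rho^{2R}}\to 0$ as $n\to\infty$ for each fixed $R$.

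The crux of the argument is making the two tail terms small uniformly in $n$. The third term, $2\int_{\rho>R}\rho\,d\mu^2$, tends to $0$ as $R\to\infty$ by absolute continuity of the integral, since $\rho$ is summable. The first term is the genuine obstacle: I would invoke the \emph{uniform integrability} of the family $\{\rho_n\}$ on $(X^2,\mu^2)$, which is automatic from its $L^1$-convergence. Indeed, an $L^1$-convergent sequence is uniformly integrable, so $\sup_n\int_{\rho_n>R}\rho_n\,d\mu^2\to 0$ as $R\to\infty$. This is precisely the uniform control that was granted for free by the boundedness hypothesis in Lemma~\ref{blim} and that must now be extracted from $L^1$-convergence alone; everything else is routine.

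Assembling the pieces, given $\eps>0$ I would first choose $R$ so large that both tail terms fall below $\eps/3$ simultaneously for all $n$ (using uniform integrability for the $\rho_n$-tail and summability for the $\rho$-tail), and then, with this $R$ fixed, choose $N$ so that $\mnorm{\rho_n^{2R}-\rho^{2R}}<\eps/3$ for all $n\geq N$ via the previous paragraph. The displayed inequality then gives $\mnorm{\rho_n-\rho}<\eps$ for all $n\geq N$, which is the desired m-norm convergence.
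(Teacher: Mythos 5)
Your proof is correct and follows essentially the same route as the paper's: truncate at level $2R$, control the two tail terms $\mnorm{\rho_n-\rho_n^{2R}}$ and $\mnorm{\rho-\rho^{2R}}$ via Lemma~\ref{cut}, and apply Lemma~\ref{blim} to the bounded cut-offs. The only differences are presentational: you invoke uniform integrability of the $L^1$-convergent sequence to bound the $\rho_n$-tails uniformly in $n$ (the paper bounds them only for sufficiently large $n$, which suffices), and you explicitly verify that $\rho^{2R}$ is admissible via criterion~4) of Theorem~\ref{kritdop} --- a hypothesis of Lemma~\ref{blim} that the paper leaves implicit.
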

\begin{proof}
We just use the two lemmas already proved. Fix
$\delta>0$ and, using the absolute continuity of the integral of
$\rho$, choose $R>0$ so large that
 $$\int\limits_{\rho>R/2}\rho d\mu^2 < \delta.$$
Since the sequence  $\rho_n$ converges to $\rho$ in $L^1(X^2)$,
for sufficiently large $n$ we have
 $$\int\limits_{\rho_n>R}\rho_n d\mu^2 < 2\delta.$$
The cut-offs $\rho_n^{2R}$ converge to $\rho^{2R}$ in the
space $L^1(X^2)$, since for any functions $f,g$ we have
$$\Lnorm{f^{2R}-g^{2R}}\leq \Lnorm{f-g}.$$
Applying Lemma~\ref{blim} to the cut-offs, we see that for sufficiently
large $n$,
$$\mnorm{\rho_n^{2R}-\rho^{2R}}\leq \delta.$$
Using Lemma~\ref{cut} twice, we can write the inequality
 $$\mnorm{\rho_n-\rho}\leq \mnorm{\rho_n-\rho_n^{2R}}+\mnorm{\rho_n^{2R}-\rho^{2R}}+\mnorm{\rho-\rho^{2R}}\leq 4\delta.$$
Thus the sequence $\rho_n$ converges to $\rho$ in the m-norm, as
required.
\end{proof}

This theorem easily implies the following corollary.

\begin{corollary}\label{corcomp}
A set of admissible semimetrics is compact in the m-norm if and only
if it is compact in $L^1$.
\end{corollary}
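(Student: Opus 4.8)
The plan is to exploit the two facts already established: that the m-norm dominates the $L^1$ norm (so m-norm convergence always implies $L^1$ convergence), and Theorem~\ref{lem4}, which supplies the converse on the class of admissible semimetrics (convergence in $L^1$ to an admissible limit upgrades automatically to convergence in the m-norm). Together these say that, restricted to the set of admissible semimetrics, the m-norm and $L^1$ topologies coincide. The corollary is then essentially a formal consequence, since both norms turn the ambient space into a metric space, where compactness and sequential compactness agree.

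First I would dispose of the easy direction. If $M$ is compact in the m-norm, then the identity map from $(M,\mnorm{\cdot})$ to $(M,\Lnorm{\cdot})$ is continuous, because $\mnorm{f}\geq\Lnorm{f}$ for every $f$. A continuous image of a compact set is compact, so $M$ is compact in $L^1$.

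For the reverse direction, suppose $M$ is compact in $L^1$. Since the m-norm induces a metric, it suffices to verify sequential compactness. Take an arbitrary sequence $\{\rho_n\}\subset M$. By $L^1$-compactness, some subsequence $\rho_{n_k}$ converges in $L^1$ to a limit $\rho\in M$; here $\rho$ lies in $M$ because a compact subset of a metric space is closed, and hence $\rho$ is an admissible semimetric. Now Theorem~\ref{lem4} applies: a sequence of semimetrics converging in $L^1$ to an admissible semimetric also converges in the m-norm to the same limit, so $\rho_{n_k}\to\rho$ in the m-norm. Thus every sequence in $M$ has a subsequence that converges in the m-norm to a point of $M$, which is precisely sequential compactness, hence compactness, of $M$ in the m-norm.

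There is essentially no hard step here: all the analytic work was carried out in Lemma~\ref{blim}, Lemma~\ref{cut}, and their combination in Theorem~\ref{lem4}. The only points requiring care are bookkeeping ones, namely that a compact set in $L^1$ is closed (so the subsequential limit lies in $M$ and is therefore admissible), and that for the normed, hence metrizable, m-norm topology sequential compactness may legitimately be substituted for compactness.
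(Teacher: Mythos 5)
Your proof is correct and follows exactly the route the paper intends: the paper states the corollary as an immediate consequence of Theorem~\ref{lem4}, and your argument (norm domination for the easy direction, extraction of an $L^1$-convergent subsequence with admissible limit in $M$ plus Theorem~\ref{lem4} for the converse) is precisely that easy deduction spelled out. The bookkeeping points you flag, that the subsequential limit lies in $M$ and hence is admissible, and that sequential compactness suffices in a normed (hence metrizable) setting, are exactly the right ones.
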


In the remaining part of this section we prove a precompactness
criterion for the m-norm.

\begin{theorem}\label{crit}
Let $M$ be a set of admissible semimetrics on
$(X,\mu)$. Then $M$ is precompact in the m-norm if and only if the
following two conditions hold:
\begin{itemize}
\item[{\rm1)}] {\rm(uniform integrability)} the set $M$ is
uniformly integrable on $X^2$;
\item[{\rm2)}] {\rm(uniform admissibility)} for every
$\eps>0$ there exists a partition of $X$ into finitely many sets
$X_1,\dots,X_k$ such that for every semimetric $\rho\in M$ there
exists a set $A\subset X$ of measure less that $\eps$ such that
$\dm{\rho}{X_j\setminus A}< \eps$.
\end{itemize}
\end{theorem}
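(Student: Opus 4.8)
The plan is to establish both directions using the equivalence between $L^1$-compactness and $m$-compactness from Corollary~\ref{corcomp}, reducing the problem to characterizing precompactness in $L^1(X^2)$ via the classical Kolmogorov--Riesz type criterion adapted to the structure of admissible semimetrics. Since Corollary~\ref{corcomp} tells us that $M$ is precompact in the $m$-norm if and only if it is precompact in $L^1(X^2)$, the real task is to show that the two stated conditions are equivalent to $L^1$-precompactness \emph{for sets of admissible semimetrics}.

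For the necessity direction, I would first assume $M$ is precompact and derive both conditions. Uniform integrability (condition 1) is immediate, since any $L^1$-precompact set is uniformly integrable by the Dunford--Pettis theorem. For uniform admissibility (condition 2), the idea is to use a finite $\eps$-net: cover $M$ by finitely many $m$-balls of small radius centered at semimetrics $\rho^{(1)},\dots,\rho^{(N)}$, each of which is admissible and hence has finite $\eps$-entropy by Theorem~\ref{kritdop}. Taking the common refinement of the partitions realizing the entropy bound for each $\rho^{(i)}$ yields a single finite partition $X_1,\dots,X_k$. For an arbitrary $\rho\in M$, close in the $m$-norm to some $\rho^{(i)}$, the closeness gives a dominating semimetric of small $L^1$-norm, and Proposition~\ref{prop1} (applied as in the proof of Lemma~\ref{shod}) lets me remove a small set $A$ so that $\rho$ agrees with $\rho^{(i)}$ up to $\eps$ on the remaining part, yielding small diameters on $X_j\setminus A$.

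For sufficiency, I would assume both conditions and show $M$ is precompact in $L^1$, then invoke Corollary~\ref{corcomp}. By uniform integrability it suffices (after a cut-off argument using Lemma~\ref{cut}) to treat uniformly bounded semimetrics, where uniform integrability is automatic. Here uniform admissibility is the key tool: given $\eps$, fix the partition $X_1,\dots,X_k$ from condition 2. Each $\rho\in M$ is, off a set of measure $<\eps$, nearly constant on each block $X_j$, so up to small $L^1$-error each $\rho$ is approximated by a block semimetric whose values are the inter-block and intra-block distances. These approximating data live in a bounded finite-dimensional space (roughly the $k^2$ pairwise block-distances), which is precompact; pulling back a finite net gives a finite $L^1$-net for $M$.

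The main obstacle I anticipate is the sufficiency direction, specifically controlling the \emph{inter-block} distances uniformly. Condition 2 directly bounds only the diameters \emph{within} each piece $X_j\setminus A$; it does not by itself control how the distance between points of $X_i$ and points of $X_j$ varies across $\rho\in M$. To finitize this one must exploit uniform integrability together with the triangle inequality to show that, after discarding small sets, each $\rho$ is well-approximated by assigning a single representative distance to each ordered pair of blocks, so that the relevant data is a bounded point in $\mathbb{R}^{k^2}$. Making this reduction precise---quantifying how the removed exceptional set $A$ (which depends on $\rho$) interacts with the fixed partition when estimating $L^1$-distances between two members of $M$---is the technical heart of the argument.
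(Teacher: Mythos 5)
Your constructive steps (cut-off via Lemma~\ref{cut}, block-averaging over the universal partition, finite-dimensionality of the approximants, and the net-plus-Proposition~\ref{prop1} argument for necessity of condition~2) match the paper's proof closely, and the ``technical heart'' you worry about is resolved exactly as you suggest: the quadrilateral inequality $|\rho(u_1,v_1)-\rho(u_2,v_2)|\leq\rho(u_1,u_2)+\rho(v_1,v_2)$ makes inter-block distances constant up to $2\delta$ once intra-block diameters are small, and the exceptional set $A$ contributes at most $2R\mu(A)$ after the cut-off, so the approximant is a bounded point of a $k^2$-dimensional space.

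However, your framing contains a genuine gap: it is \emph{not} true that Corollary~\ref{corcomp} reduces the theorem to characterizing $L^1$-precompactness of sets of admissible semimetrics. That corollary concerns \emph{compact} sets consisting of admissible semimetrics; for precompactness the two norms are genuinely inequivalent, because the $L^1$-closure of a set of admissible semimetrics may contain non-admissible limits. Concretely, let $\rho_n$ be the block semimetric of the partition of $X=[0,1]$ into $n$ equal intervals; then $\rho_n\to 1$ (off the diagonal) in $L^1$, so $\{\rho_n\}$ is $L^1$-precompact, yet it cannot be m-precompact: an m-convergent subsequence would have an admissible limit by Corollary~\ref{c1}, contradicting the non-admissibility of the constant semimetric. (Consistently, this family violates your condition~2.) So in your sufficiency direction, after proving $L^1$-precompactness you cannot simply ``invoke Corollary~\ref{corcomp}''; you must first show that the $L^1$-closure of $M$ consists of admissible semimetrics, which is exactly what Lemma~\ref{lem3} provides, its hypothesis (uniformly bounded $\eps$-entropies) being a consequence of uniform admissibility. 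Only then does Corollary~\ref{corcomp} (equivalently Theorem~\ref{lem4}) upgrade $L^1$-compactness of the closure to m-norm compactness. This is a one-lemma fix, but as written the final step of your sufficiency argument, and the equivalence your whole plan rests on, do not hold.
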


Note that condition~2)  in the statement of the theorem can be
replaced with the equivalent condition~2') in which
${\rm diam}$ is replaced by ${\rm essdiam}$.
Moreover, each of these conditions
implies that the set $\{\Ent{\eps}{\rho}\colon\rho \in M\}$
is bounded for every
$\eps>0$.

It is worth mentioning that we will use not only the definition of
uniform integrability, but also its reformulation. We will say that
a family of functions
$K \subset L^1(\Omega, \nu)$ is uniformly integrable if for every
$\eps>0$ there exists $\delta>0$ such that for every set
$A\subset \Omega$ with $\nu(A)<\delta$, for every function $f\in K$,
$$\int\limits_A |f| d\nu < \eps.$$

Now we proceed to the proof of the theorem.

\begin{proof}
First we will prove that if $M$ is precompact in the m-norm, then
conditions~1) and~2) are satisfied. Note that since the m-norm
dominates the $L^1$ norm, the set $M$ is precompact in the space
$L^1$ and hence uniformly integrable.

Consider an arbitrary finite partition $\xi$ of the space $X$ into sets
$X_1, \dots, X_k$. Assume that for some semimetric
$\rho$ the partition $\xi$ is an $\eps$-partition, i.e., there exists
an exceptional set $A$ such that
$\mu(A)<\eps$ and $\dm{\rho}{X_j\setminus A}<\eps$, $j=1,\dots, k$.
Using Proposition~\ref{prop1}, one can easily see that there exists
$\delta>0$ such that if $\mnorm{\rho-\rho_1}<\delta$, then $\xi$ is an
$\eps$-partition for
$\rho_1$, too. That is, the set of semimetrics for which a given
partition is an $\eps$-partition is open in the m-norm. We will refer to this set as corresponding to $\xi$. By the
Corollary~\ref{c1}, the closure of the set $M$ in the
m-norm consists only of admissible semimetrics, each having a
finite $\eps$-partition. Let us cover the closure of $M$ (which is a compact
set) by the open sets corresponding to finite partitions. This open
cover has a finite subcover. Clearly, the intersection of the
corresponding partitions is a universal $\eps$-partition for all
semimetrics in $M$, i.e., condition~2) is satisfied.

Now we will prove that conditions~1) and~2) are sufficient for $M$ to
be precompact.

First we prove that the set $M$ is precompact in the space
$L^1(X^2)$. It suffices to find, for every
$\eps>0$, a finite $4\eps$-net in the $L^1$-norm.

The uniform integrability of the family $M$ means that
$$\lim\limits_{R\to +\infty} \sup\limits_{\rho \in M} \iint_{\rho>R}\rho d(\mu\times\mu) =0.$$
Hence for sufficiently large $R$, all cut-offs of the functions are
close in $L^1$ (and even in the m-norm) to the corresponding
semimetrics from $M$. Therefore, it suffices to search for an
$\eps$-net in the set of cut-off semimetrics. For sufficiently large
$R$, we have
$$\mnorm{\rho^R-\rho}<\eps$$
 for every $\rho \in M$.
Note that the universal partition from condition~2) remains universal
also for all cut-offs $\rho^R$. The set of cut-off semimetrics will
be denoted by $M^R$.

Fix a small number $\delta>0$ which will be specified later, and, using
condition~2), find a universal
$\delta$-partition $X=X_1\cup \dots \cup X_k$. For every function
$\rho \in M^R$, find an exceptional set $A$ of measure at most
$\delta$ such that $\dm{\rho}{X_j\setminus A}<\delta$ for $j=1,\dots,k$.
Put $Y_j=X_j\setminus A$ and define a function $\bar{\rho}\in L^1(X^2)$
on each of the sets $X_i\times X_j$ as the average of $\rho$ over the set
$Y_i\times Y_j$. In the case where one of the sets
$Y_j$ has zero measure, we set the value
of $\bar{\rho}$ on this set equal to zero. We will prove that
$\bar{\rho}$ is close to $\rho$ in the
space $L^1(X^2)$. First,
both functions are bounded by $R$. Second, for any
$u_1,u_2 \in Y_i$, $v_1,v_2 \in Y_j$, we have the obvious inequality
$$|\rho(u_1,v_1)-\rho(u_2,v_2)|\leq \rho(u_1,u_2)+\rho(v_1,v_2)<2\delta.$$
Hence
$$\int\limits_{Y_i}\int\limits_{Y_j}|\rho-\bar{\rho}|d\mu^2 < 2\delta \mu(Y_i)\mu(Y_j).$$
The union of all sets of the form
$Y_i \times Y_j$ is exactly $(X \setminus A)^2$, whence
$$\int\limits_{X}\int\limits_{X}|\rho-\bar{\rho}|d\mu^2 < 2\delta \mu(X\setminus A)^2+2R \mu(A)<2\delta(1+R),$$
which is small for sufficiently small $\delta$. Thus we can
approximate every function from
$M^R$ by the corresponding function $\bar{\rho}$ with accuracy
$\eps/2$. But the set of all such functions
$\bar{\rho}$ is bounded in $L^1(X^2)$ and is contained in a
finite-dimensional subspace, so that it has a finite
$\eps/2$-net. It follows that in $M$ we can find a finite
$4\eps$-net with respect to the norm of the space $L^1(X^2)$.

Thus $M$ is precompact in
$L^1(X^2)$. Consider its closure $\bar{M}$ in $L^1(X^2)$.
By Lemma~\ref{lem3} (the condition of this lemma holds because of the uniform admissibility), all functions from $\bar{M}$ are admissible
semimetrics. Thus the set $\bar{M}$, which is compact in
$L^1$, consists of admissible semimetrics
only, so that, by Corollary~\ref{corcomp} of Lemma~\ref{lem4},
it is compact in the m-norm. Hence the set $M$ is precompact in the
m-norm.
\end{proof}

Theorems~\ref{lem4}, \ref{crit} and Lemma~\ref{lem3} easily imply the
following corollary.

\begin{corollary}\label{cor2}
If $M$ is a precompact set in the m-norm that consists of admissible
semimetrics, then its closures in
$L^1(X^2)$ and in the m-norm coincide and consist of admissible
semimetrics only. Also, $\eps$-entropies of semimetrics
in $M$ are uniformly bounded for any fixed $\eps>0$. In particular,
this holds for a sequence of admissible semimetrics,
converging in m-norm
(and hence by Lemma \ref{lem4}
for a sequence of admissible semimetrics, converging
in $L^1$ to admissible semimetric.)
\end{corollary}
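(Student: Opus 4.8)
The plan is to assemble the corollary from the three preceding results, with no new estimates required. First I would extract the uniform entropy bound. Since $M$ is precompact in the m-norm, Theorem~\ref{crit} applies and yields its conditions~1) and~2); by the remark following that theorem, uniform admissibility forces the set $\{\Ent{\eps}{\rho}\colon\rho\in M\}$ to be bounded for every fixed $\eps>0$. This is precisely the second assertion of the corollary, and it is simultaneously the hypothesis needed to invoke Lemma~\ref{lem3}.

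Next I would identify the two closures. Because the m-norm dominates the $L^1$-norm, every m-norm limit point of $M$ is automatically an $L^1$ limit point, so the m-norm closure is contained in the $L^1$-closure. For the reverse inclusion I would take an arbitrary $\rho$ in the $L^1$-closure and a sequence $\rho_n\in M$ with $\rho_n\to\rho$ in $L^1(X^2)$. By Lemma~\ref{lem3}, whose entropy hypothesis was just verified, the limit $\rho$ is an admissible semimetric; hence Theorem~\ref{lem4} upgrades the $L^1$-convergence $\rho_n\to\rho$ to convergence in the m-norm, placing $\rho$ in the m-norm closure. The two inclusions give equality of the closures, and Lemma~\ref{lem3} guarantees that they consist of admissible semimetrics only.

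Finally I would deduce the ``in particular'' clause. If $\rho_n$ is a sequence of admissible semimetrics converging in the m-norm to $\rho$, then $\rho$ is admissible by Corollary~\ref{c1}, and the set $\{\rho_n\}\cup\{\rho\}$ is compact, hence precompact, in the m-norm; applying the first two assertions to this set yields the uniform entropy bound. The parenthetical reduction from $L^1$-convergence to m-norm convergence is exactly Theorem~\ref{lem4}, which applies once the limit is known to be an admissible semimetric, so this last case folds into the previous one.

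I expect no genuine difficulty, as each step quotes an already-proven statement. The only point demanding care is the logical ordering in the second paragraph: Theorem~\ref{lem4} may be invoked only \emph{after} the limit has been certified admissible, so Lemma~\ref{lem3} must be applied first; attempting the $L^1$-to-m-norm upgrade before establishing admissibility would be circular.
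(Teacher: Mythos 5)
Your proof is correct and follows essentially the same route as the paper, which simply asserts that the corollary follows from Theorems~\ref{lem4}, \ref{crit} and Lemma~\ref{lem3}: you extract the uniform entropy bound from condition~2) of Theorem~\ref{crit} (via the remark after it), use Lemma~\ref{lem3} to certify admissibility of $L^1$-limits, and only then invoke Theorem~\ref{lem4} to upgrade to m-norm convergence, which is exactly the intended assembly and the right logical order.
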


The following criterion of precompactness
deals with convex sets of metrics. It is suggested
by applications in ergodic theory.

\begin{theorem}\label{kriko}
Let $M$ be a uniformly integrable convex family of admissible
semimetrics in the space
$\mathbb{M}$. Then $M$ is precompact in the m-norm if and
only if the $\eps$-entropies of
semimetrics in $M$ are uniformly (with respect to semimetric) bounded for every fixed
$\eps>0$.
\end{theorem}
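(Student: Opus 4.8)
The statement is an equivalence, and one direction is immediate. If $M$ is precompact in the m-norm, then Corollary~\ref{cor2} applies directly and gives that the $\eps$-entropies $\{\Ent{\eps}{\rho,\mu}\colon \rho\in M\}$ are uniformly bounded for every fixed $\eps>0$; here convexity and uniform integrability play no role. Thus the whole content lies in the converse: assuming $M$ convex, uniformly integrable, and with uniformly bounded $\eps$-entropy at every scale, I must produce precompactness. The natural strategy is to reduce to the precompactness criterion of Theorem~\ref{crit}. Uniform integrability is condition~1) of that theorem and is given, so the only thing to verify is the uniform admissibility condition~2), namely the existence, for each $\eps>0$, of a single finite partition $X=X_1\cup\dots\cup X_k$ that is simultaneously an $\eps$-partition for every $\rho\in M$. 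The remark following Theorem~\ref{crit} notes that uniform admissibility implies bounded entropy in general, but not conversely, so convexity must be exactly what bridges the gap. That it is genuinely needed is visible already for cuts: the family of all cut semimetrics has $\eps$-entropy at most $\log 2$ yet is far from precompact, whereas an average of $m$ ``independent'' cuts behaves like a normalized Hamming distance whose $\eps$-entropy grows linearly in $m$. This example is also the model for the argument.

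The plan for the converse is a proof by contradiction driven by averaging. Suppose uniform admissibility fails at some scale $\eps_0$: then for every finite partition $\xi$ there is a ``bad'' semimetric $\rho\in M$ for which, after deleting any exceptional set of measure $<\eps_0$, some cell of $\xi$ still has $\rho$-diameter $\geq\eps_0$. I would build inductively a sequence $\rho_1,\rho_2,\dots\in M$ as follows. Having chosen $\rho_1,\dots,\rho_m$, form the average $\sigma_m=\frac1m\sum_{i=1}^m\rho_i$, which lies in $M$ by convexity and, using the uniform entropy bound, admits a finite partition $\xi_m$ into cells of small $\sigma_m$-diameter whose number is bounded independently of $m$. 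Since $\xi_m$ cannot be universal, choose $\rho_{m+1}\in M$ bad for $\xi_m$. The geometric meaning of this choice is that $\rho_{m+1}$ separates, by distance $\geq\eps_0$ and on a set of pairs of measure bounded below, many point-pairs that $\sigma_m$ keeps inside a common small cell; thus $\rho_{m+1}$ is ``independent'' of the structure already accumulated in $\sigma_m$.

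The crux is to show that this forces $\Ent{\eps_1}{\sigma_m,\mu}\to\infty$ for a suitable fixed scale $\eps_1$, contradicting the uniform entropy bound. Here the evident difficulty is dilution: passing from $\sigma_m$ to $\sigma_{m+1}=\frac{m\sigma_m+\rho_{m+1}}{m+1}$ raises the distance between any $\rho_{m+1}$-separated pair by only $\eps_0/(m+1)$, so a single step seems to contribute nothing at a fixed scale. The point, exactly as in the Hamming picture above, is that the contributions must be read cumulatively rather than one step at a time: the successive independent separations act like independent coordinates, so the number of cells needed to cover $X$ by sets of small $\sigma_m$-diameter multiplies at each stage by a factor bounded below, and hence $\Ent{\eps_1}{\sigma_m,\mu}$ grows at least linearly in $m$. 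Making this rigorous is the main obstacle: one must convert ``bad for $\xi_m$'' into a definite, non-decaying per-step increment of entropy, which amounts to a sphere-packing (volume-counting) estimate in the Hamming-like geometry induced by the partial averages, carefully tracking the measure of the separated region so that the $1/(m+1)$ normalization is compensated by the exponential growth in the number of separated cells.

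Once the contradiction is in place, uniform admissibility holds at every scale, so both conditions of Theorem~\ref{crit} are satisfied and $M$ is precompact in the m-norm. Alternatively, having established uniform admissibility one may run the $L^1$-approximation in the proof of Theorem~\ref{crit} verbatim to get $L^1$-precompactness, then invoke Lemma~\ref{lem3} and Corollary~\ref{corcomp} to transfer precompactness to the m-norm. This completes both implications.
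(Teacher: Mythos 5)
Your forward direction is fine, and your overall plan for the converse (use convexity by averaging, argue by contradiction) points in the right general direction, but the proof has a genuine gap at precisely the step you yourself flag as ``the main obstacle.'' Nothing in the proposal actually proves that the inductively constructed averages $\sigma_m$ have $\Ent{\eps_1}{\sigma_m,\mu}\to\infty$. The Hamming analogy presumes that the successive ``bad'' semimetrics $\rho_{m+1}$ act like independent coordinates, but the badness condition (no deletion of a set of measure $<\eps_0$ makes all cells of $\xi_m$ have $\rho_{m+1}$-diameter $<\eps_0$) gives no independence from the structure already encoded in $\sigma_m$, no lower bound on the $\mu^2$-measure of separated pairs spread over many cells, and no product structure on which a sphere-packing count could run; large diameter of a cell is witnessed by pairs of points, not by sets of pairs of definite measure. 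Since a single step changes any fixed pair distance by only $\eps_0/(m+1)$, the entire burden of the proof rests on this unproved cumulative estimate, so what you have is a program rather than a proof. Note also that the reduction itself buys nothing: by Theorem~\ref{crit}, for a uniformly integrable family non-precompactness is \emph{equivalent} to failure of uniform admissibility, so the claim ``failure of uniform admissibility $+$ convexity $+$ bounded entropies is contradictory'' is not a smaller lemma --- it is a restatement of Theorem~\ref{kriko} itself.

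The paper's proof resolves the dilution problem by running the averaging in the opposite direction: instead of building entropy up from badness, it tears down separation. Assuming $M$ is not precompact in $L^1$, pick a $c$-separated sequence $\rho_1,\rho_2,\dots$ in $M$ and form $\rho=\frac1n(\rho_1+\dots+\rho_n)\in M$ (convexity). The uniform entropy bound gives an $\eps$-partition $X_0,X_1,\dots,X_k$ for $\rho$ with $k$ independent of $n$, and the key observation is that for $x$ and $p_i$ in a common cell one has $\sum_s\rho_s(x,p_i)=n\rho(x,p_i)\leq\eps n$: smallness of the \emph{average} at scale $\eps$ forces the individual $\rho_s$ to be small \emph{on average over} $s$, so here the $1/n$ normalization works for you rather than against you. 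Consequently, at least $n/2$ of the $\rho_s$ are within $c/5$ in $L^1$ of piecewise-constant functions $d_s$ built on this single partition (uniform integrability handles the exceptional set $X_0$); the $d_s$ lie in a fixed space of dimension $k^2+1$ with bounded norms, so for large $n$ a volume/pigeonhole argument produces $s,t$ with $\Lnorm{d_s-d_t}\leq c/5$, whence $\Lnorm{\rho_s-\rho_t}<c$, a contradiction. This yields $L^1$-precompactness directly, and then Lemma~\ref{shod} (limits are admissible, using the entropy bound) together with Theorem~\ref{lem4} upgrades $L^1$-convergence of subsequences to m-norm convergence, with no need to verify condition~2) of Theorem~\ref{crit} separately. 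To salvage your route you would need exactly such a ``separation defeats any common partition'' argument, at which point you have reproduced the paper's proof.
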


\begin{proof}
The precompactness of $M$ implies the uniform boundedness of the
$\eps$-entropies, e.g., by item 2 in Theorem~\ref{crit}.

Now we prove that if the
$\eps$-entropies are uniformly bounded for every $\eps>0$, then $M$
is precompact in
$L^1(X^2)$. This will imply that $M$ is precompact in the
m-norm. Indeed, a set is precompact if and only if
every sequence of elements of this set has a Cauchy subsequence.
Thus if $M$ is precompact in  $L^1$, then every sequence of
elements of $M$ has a Cauchy subsequence, which converges to a
semimetric $\rho$ in $L^1$; since the
$\eps$-entropies are uniformly bounded, it follows from
Lemma~\ref{shod} that this semimetric is admissible.
Then, by Theorem~\ref{lem4}, the sequence converges to $\rho$ also in the space
$\mathbb{M}$.

Assume that $M$ is not precompact in
$L^1$. Then, for some $c>0$, we can choose a sequence of semimetrics
$\rho_1,\rho_2,\dots$  in
$M$ such that $\|\rho_i-\rho_j\|_{L_1}>c$
for all indices $1\leq i<j<\infty$. For the moment, fix
$\eps>0$ whose value will be specified later. Find a positive integer $k$ such
that for every metric
$\rho\in M$ there exists a partition of $X$ into sets
$X_0,X_1,\dots,X_k$ such that $\mu(X_0)<\eps$ and $|\rho(x,y)|< \eps$
for all $x,y\in X_i$, $i=1,2,\dots,k$.

Consider the semimetric
$\rho=\frac{\rho_1+\dots+\rho_n}n$; by convexity,
$\rho\in M$. The value of $n$ will also be specified later.
Consider the corresponding partition of $X$ into sets
$X_0,X_1,\dots,X_k$. Choose points $p_i$ in
$X_i$ arbitrarily for $i=1,\dots,k$.
For $s=1,2,\dots,n$, consider the function $d_s$ on  $X\times X$ defined as
$$
d_s(x,y)=\begin{cases}
       0,& x\in X_0\, \text{ or }\, y\in X_0,\\
       \rho_s(p_i,p_j),& x\in X_i,y\in X_j\quad (1\leq i,j\leq k).
       \end{cases}
$$
We will estimate the sum of the
$L^1$-distances between the pairs of functions
$d_s,\rho_s$ on $X\times X$. The measure of the set
$X_0\times X\cup X\times X_0$ is less than $2\eps$; the integral over
this set of each of the functions
$\rho_s$ does not exceed some value
$\delta(\eps)$ which is small provided that $\eps$ is small (this is
the uniform integrability of $M$). On
$X_i\times X_j$ we have
$$
|\rho_s(x,y)-d_s(x,y)|=|\rho_s(x,y)-\rho_s(p_i,p_j)|\leq
\rho_s(x,p_i)+\rho_s(y,p_j).
$$
We sum these inequalities over $s=1,\dots,n$. In the right-hand side, the sums
$\sum_s \rho_s(x,p_i)=n\rho(x,p_i)$,
$\sum_s \rho_s(y,p_j)=n\rho(y,p_j)$ appear, each not exceeding
$\eps n$. Integrating over $X_i\times X_j$
and summing over all $i,j=1,2,\dots,k$ yields
$$
\sum_s \iint_{X\times X} |\rho_s-d_s|\leq \delta(\eps) n+
2\eps n.
$$
Now assume that $\delta(\eps)+2\eps<c/10$.
Then the estimate
$\|\rho_s-d_s\|<c/5$ holds  at least for $n/2$ indices $s$.

Note that all metrics $d_s$ lie in the same space $L$
of piecewise constant functions, which has dimension
$k^2+1$. Besides, their norms are bounded by a constant depending only
on the uniform bound on the norms of semimetrics in $M$. It follows that
if $n$ is sufficiently large, then among any
$n/2$ of these metrics there are two, say
$d_s,d_t$, with distance at most
$c/5$ from each other (indeed, otherwise the balls in $L$ of radius
$c/10$ centered at these functions would be
disjoint and would lie in a ball
of a bounded radius, which is impossible for large $n$
from volume considerations; note that the bound on $n$ here depends
only on the dimension of the space, but not on its structure). But if
$\|\rho_s-d_s\|<c/5$,
$\|\rho_t-d_t\|<c/5$, $\|d_s-d_t\|<c/5$, then $\|\rho_s-\rho_t\|<c$,
contradicting the assumption.
\end{proof}

Note that the criterion may be rephrased for not
neccesarily
convex family of semimetrics: $\eps$-entropies of
all finite convex combinations must be uniformly bounded,
and if it is the case, then the family is precompact.
It immediately follows from Theorem \ref{kriko}
and the fact that the set in Banach space is precompact
if and only if its convex hull is precompact.

In the following special case we see that not even
all convex combinations are necessary for
assuring in precompactness.

\begin{theorem}\label{kriauto}
 Let $(X,\rho)$ be admissible semimetric triple, $T$ be
measure-preserving transform on $X$
(not necessarily invertible). 
 Denote $T^k\rho(x,y)=\rho(T^kx,T^ky)$
and $\rho_n^{av}=n^{-1}\sum_{k=1}^n T^k\rho$.
Assume that for any $\eps>0$ $\eps$-entropies of
semimetrics $\rho_n^{av}$ are uniformly bounded. Then the orbit
$\{\rho, T\rho, T^2\rho,\dots\}$
of $\rho$ under action of $T$ is precompact (say, in m-norm).
\end{theorem}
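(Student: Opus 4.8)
The plan is to reduce the statement to precompactness in $L^1(X^2)$ and then run a contradiction argument in the spirit of Theorem~\ref{kriko}, the new ingredient being that the required entropy bound is supplied by the averages $\rho_n^{av}$ themselves rather than by arbitrary convex combinations. First I would record two structural facts. Writing $U=T\times T$, the map $U$ is a measure-preserving transformation of $(X^2,\mu\times\mu)$, so all iterates $T^k\rho$ are equimeasurable with $\rho$; hence $\int_{\{T^k\rho>R\}}T^k\rho\,d\mu^2=\int_{\{\rho>R\}}\rho\,d\mu^2\to0$ uniformly in $k$, i.e. the orbit is uniformly integrable. Second, pulling an optimal $\eps$-partition for $\rho$ back by $T^k$ (which preserves measure) shows $\Ent{\eps}{T^k\rho}\le\Ent{\eps}{\rho}$ for every $k$, so the individual $\eps$-entropies along the orbit are uniformly bounded. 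By Lemma~\ref{lem3} every $L^1$-limit of orbit elements is then an admissible semimetric, and by Corollary~\ref{corcomp} (via Theorem~\ref{lem4}) precompactness of the orbit in $L^1$ is equivalent to its precompactness in the m-norm. Thus it suffices to prove that the orbit is precompact in $L^1(X^2)$.

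\emph{Contradiction setup.} Suppose it is not. Then for some $c>0$ there is a subsequence $T^{k_1}\rho,T^{k_2}\rho,\dots$ with $\Lnorm{T^{k_i}\rho-T^{k_j}\rho}>c$ for $i\ne j$. I want to apply the pigeonhole mechanism from the proof of Theorem~\ref{kriko} to the averages $\sigma_n=\frac1n\sum_{j=1}^n T^{k_j}\rho$; for this I need a bound on $\Ent{\eps}{\sigma_n}$ that is uniform in $n$. This is exactly where the hypothesis on $\rho_n^{av}$ enters, through two elementary properties of the $\eps$-entropy: monotonicity, $\sigma\le\tau$ a.e. implies $\Ent{\eps}{\sigma}\le\Ent{\eps}{\tau}$, and scaling, $\Ent{\eps}{c\tau}=\Ent{\eps/c}{\tau}$.

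\emph{The key estimate and the main obstacle.} If the separated subsequence can be chosen to have positive upper density $\delta>0$, then along a suitable sequence of indices one has $k_n\le n/\delta$, and since $\{k_1,\dots,k_n\}\subset\{1,\dots,k_n\}$ and all summands are nonnegative,
$$\sigma_n=\frac1n\sum_{j=1}^n T^{k_j}\rho\le\frac{k_n}{n}\,\rho_{k_n}^{av}\le\frac1\delta\,\rho_{k_n}^{av}\quad\text{a.e.}$$
Hence $\Ent{\eps}{\sigma_n}\le\Ent{\eps}{\tfrac1\delta\rho_{k_n}^{av}}=\Ent{\delta\eps}{\rho_{k_n}^{av}}$, which is uniformly bounded by hypothesis. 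The delicate point, and the one I expect to be the main obstacle, is precisely the extraction of a separated subsequence of positive upper density: a priori the orbit could fail to be precompact while separating only along a density-zero set of times, and then no average of the $T^i\rho$ gives the escaping terms a nonnegligible weight. To rule this out I would use the measure-preserving (hence $L^1$-isometric) nature of $U=T\times T$: the orbit is equimeasurable, so relatively weakly compact by the Dunford--Pettis theorem, and a relatively weakly compact orbit of an isometry that fails to be norm-precompact must separate on a set of positive upper (Banach) density (its return times cannot be syndetic, so the escape times contain arbitrarily long blocks). This is the step that genuinely uses the dynamics, and not merely the entropy bound.

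\emph{Conclusion.} Granting the uniform bound $\Ent{\eps}{\sigma_n}\le B$, I finish as in Theorem~\ref{kriko}. For small $\eps$ this bound produces a partition of $X$ into a number of pieces bounded independently of $n$, on which $\sigma_n$ has small diameter off a small exceptional set. Replacing each $T^{k_j}\rho$ by the function $d_j$ that is constant on the products of these pieces (equal to the value of $T^{k_j}\rho$ at chosen representatives), the triangle inequality bounds $\sum_{j\le n}\Lnorm{T^{k_j}\rho-d_j}$ by a quantity of order $\eps n$, so a positive proportion of the $d_j$ lie within $c/4$ of the corresponding $T^{k_j}\rho$. Since all $d_j$ belong to one fixed finite-dimensional bounded set, they admit a finite $c/4$-net; by pigeonhole two of the corresponding $T^{k_j}\rho$ are then closer than $c$, contradicting the separation. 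This contradiction shows the orbit is precompact in $L^1$, and hence, by the reduction above, in the m-norm.
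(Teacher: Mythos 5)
Your reduction to $L^1$-precompactness, the domination trick $\sigma_n\le\delta^{-1}\rho_{k_n}^{av}$ with the entropy monotonicity/scaling (so $\Ent{\eps}{\sigma_n}\le\Ent{\delta\eps}{\rho_{k_n}^{av}}$), and the concluding finite-dimensional pigeonhole are all sound and close to the paper's own mechanism. The proof breaks at exactly the step you yourself flag as the main obstacle: extracting a $c$-separated subsequence whose index set has positive upper (Banach) density. Your justification --- return times are not syndetic, hence the escape times contain arbitrarily long blocks --- only produces long stretches of times $k$ at which $T^k\rho$ is far \emph{from $\rho$}; it says nothing about these orbit points being far \emph{from one another} (the orbit can wander away and then dwell inside a tiny ball for the whole block). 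Mutual $c$-separation along an index set $K$ means $(K-K)\setminus\{0\}\subset\{j\colon\|T^j\rho-\rho\|\ge c\}$, a condition on the entire difference set of $K$, and this does not follow from non-syndeticity of the return times. Worse, the extraction lemma is false as the purely functional-analytic/dynamical statement you intend it to be: take $T$ weakly mixing but rigid along a rigidity sequence $(s_k)$ that is a set of recurrence (such systems are known to exist; this is the rigidity-versus-recurrence phenomenon raised by Bergelson, del Junco, Lema\'nczyk and Rosenblatt and settled by later constructions of Griesmer). For a bounded admissible $\rho$, the orbit is not precompact (otherwise the argument of the paper's implication $3)\Rightarrow1)$ would force discrete spectrum), yet for every $c>0$ the return-time set contains a tail of $(s_k)$ and is therefore density-intersective: every index set of positive upper Banach density has a nonzero difference in it, so no $c$-separated subsequence of positive Banach density exists at any scale. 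In other words, the step you say ``genuinely uses the dynamics, and not merely the entropy bound'' cannot be done without the entropy bound.

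The paper avoids the issue entirely, and its device is worth absorbing because it completes your argument with the ingredients you already have. From the entropy bound on $\rho_n^{av}$ it extracts (by the finite-dimensional approximation in the proof of Theorem~\ref{kriko}) a \emph{cluster}: at least $n/2C$ indices $i_1<\dots<i_k\le n$ with mutual distances at most $\eps/3$, where $C=C(\eps,\rho)$. It then uses only finitely many terms $n_1<\dots<n_M$, $M=5C+1$, of the separated sequence and counts sums: there are more than $2n$ pairs $(a,p)$ but all sums $i_a+n_p$ lie below $2n$, so $i_a+n_p=i_b+n_q$ for two distinct pairs. Since composition with $T$ is an $L^1$-isometry, this coincidence gives $\|T^{i_a}\rho-T^{i_b}\rho\|=\|T^{n_p}\rho-T^{n_q}\rho\|$, i.e. $\eps/3\ge\eps$, a contradiction. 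The separation is thus transported \emph{into} the dense cluster by the semigroup action; no density of the separated sequence is ever required. Replacing your extraction step by this sum-pigeonhole argument closes your proof.
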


\begin{proof}
 Assume the contrary, then for some $\eps>0$ and
some positive integers $n_1<n_2<\dots$
the mutual distances between metrics $T^{n_i}\rho$
are not less than $\eps$. We know from the proof
of Theorem \ref{kriko} that
there exists dimension $D$ depending on $\eps$ and,
if $n$ is large enough, there exists a subspace
$L_D$ of dimension $D$ such that not less than, say,
$n/2$
metrics $T^i \rho$ ($i=1,2,\dots,n$) are $\eps/9$-close
to $L_D$. Also ball of radius, say,
$2\int \rho+2\eps$ in $L_D$ has $\eps/9$-net of cardinality at most
$C=C(\eps,\rho)$. Hence we may find at least $n/2C$
indices $i_1<i_2<\dots<i_k\leq n$, $k\geq n/2C$
such that mutual distances between metrics $T^{i_s}\rho$
do not exceed $\eps/3$.

Consider pairs of integers
$(a,p)$, where $1\leq a\leq k$, $1\leq p\leq M$, $M=5C+1$. Then all sums $i_a+n_p$ are less than
$2n$ (if $n$ is large enough), while there are more than $2n$
such sums. Then by pigeonhole principle
there exist $i_a<i_b$ and $n_p<n_q$
such that $i_a+n_p=i_b+n_q$.
Hence the distance
between metrics $T^{i_a}\rho$ and $T^{i_b}\rho$ coincides with the distance between $T^{n_p}\rho$ and $T^{n_q}\rho$, while the latter
is not less than $\eps$ and the former is not greater than
$\eps/3$. A contradiction.
\end{proof}

\subsection{Matrix definitions of admissible metrics}

Using Lemma~\ref{cut}, one can characterize the admissibility of
(summable) metrics in terms of the behavior of the traces of the
matrices of block averages of metrics.

\begin{theorem}\label{projective}
Let $\rho$ be a measurable summable metric
defined on a Lebesgue space
$(X,\mu)$. Consider a partition $\lambda$ of $X$ into $n$ sets of
equal measure,
$X=\sqcup_{i=1}^n \Delta_i$,
and construct the matrix
$A_{\rho,\lambda}$ of averages of
$\rho$ over $\lambda$:
$$
A_{\rho,\lambda}(i,j)=n^2 \int_{\Delta_i\times \Delta_j} \rho d\mu^2.
$$

{\rm1)} If
$$
\inf \frac1n \tr A_{\rho,\lambda}=0,
$$
where the infimum is taken over all $n$ and over all partitions of
$X$ into $n$ parts of equal measure, then the metric
$\rho$ is admissible.

{\rm2)} Assume that the metric $\rho$ is admissible and a sequence
of partitions $\lambda_1,\lambda_2,\dots$ satisfies the Lebesgue
density theorem (i.e., for every measurable subset
$Y\subset X$, for almost every point
$y\in Y$, the density of
$Y$ in the element $\lambda_k(y)$
of the partition $\lambda_k$ that contains $y$ tends to $1$ as $k\to +\infty$). Then
$$
\lim_{k\rightarrow +\infty}
\frac1{n_k} \tr A_{\rho,\lambda_k}=0,
$$
where $n_k$ is the number of parts in
$\lambda_k$. This property is satisfied, for example, for a sequence
of dyadic partitions, for partitions of an interval into equal
subintervals, partitions of a square into equal rectangles, etc.
\end{theorem}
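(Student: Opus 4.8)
For part~1) I would argue by contraposition using the characterization of admissibility in Theorem~\ref{kritdop}. Suppose $\rho$ is not admissible; then condition~6) fails, so there are $\eps>0$ and a set $A$ with $\mu(A)>0$ such that $\rho(x,y)\ge\eps$ for almost all $(x,y)\in A\times A$. Fix any partition $X=\sqcup_{i=1}^{n}\Delta_i$ into $n$ parts of equal measure $1/n$. Rewriting the normalized trace and restricting the integrals to $A$,
$$\frac1n\tr A_{\rho,\lambda}=n\sum_{i=1}^{n}\int_{\Delta_i\times\Delta_i}\rho\,d\mu^2\ge n\eps\sum_{i=1}^{n}\mu(\Delta_i\cap A)^2.$$
Applying the Cauchy--Schwarz inequality $\sum_i\mu(\Delta_i\cap A)^2\ge\frac1n\big(\sum_i\mu(\Delta_i\cap A)\big)^2=\mu(A)^2/n$, I obtain $\frac1n\tr A_{\rho,\lambda}\ge\eps\,\mu(A)^2$, a bound independent of $n$ and of the partition. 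Hence the infimum is at least $\eps\,\mu(A)^2>0$, contradicting the hypothesis, and part~1) follows.

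For part~2) I first record the identity $\frac1{n_k}\tr A_{\rho,\lambda_k}=n_k\int_{D_k}\rho\,d\mu^2$, where $D_k=\bigcup_i\Delta_i\times\Delta_i$ is the union of diagonal blocks and $\mu^2(D_k)=1/n_k$; thus the quantity is exactly the average of $\rho$ over $D_k$, and the claim is that this diagonal average tends to $0$. The heart of the matter is the \emph{bounded case}. Fix $\eps>0$ and, using admissibility (condition~5) of Theorem~\ref{kritdop} together with the diameter trick from the proof of that theorem), choose a finite partition $X=X_0\cup X_1\cup\dots\cup X_m$ with $\mu(X_0)<\eps$ and $\dm{\rho}{X_j}\le\eps$ for $j\ge1$. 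Splitting $\int_{\Delta_i^2}\rho$ according to the sets $X_a$, the ``same-piece'' diagonal terms ($a=b\ge1$) contribute at most $\eps\,n_k\sum_i\mu(\Delta_i)^2=\eps$, while every remaining block is controlled through the densities $\mathbb{E}[\mathbf{1}_{X_a}\mid\lambda_k]$: the key computation is
$$n_k\sum_i\mu(\Delta_i\cap X_a)\,\mu(\Delta_i\cap X_b)=\big\langle \mathbb{E}[\mathbf{1}_{X_a}\mid\lambda_k],\,\mathbb{E}[\mathbf{1}_{X_b}\mid\lambda_k]\big\rangle_{L^2(\mu)}\longrightarrow\mu(X_a\cap X_b),$$
which follows from the density hypothesis, since it forces $\mathbb{E}[\mathbf{1}_{X_a}\mid\lambda_k]\to\mathbf{1}_{X_a}$ almost everywhere and boundedly, hence in $L^2$. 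For $a\ne b$ the limit is $0$, and summing the exceptional contributions ($a=0$ or $b=0$) gives, for $\rho$ bounded by $R$, a bound tending to $R\,\mu(X_0)<R\eps$. Letting first $k\to\infty$ and then $\eps\to0$ yields $n_k\int_{D_k}\sigma\to0$ for every bounded admissible semimetric $\sigma$; in particular for $\sigma=\rho^{2R}$.

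It remains to pass from bounded to merely summable metrics, and this is where I expect the \emph{main obstacle}. Writing $\rho=\rho^{2R}+(\rho-\rho^{2R})$, the bounded part is handled above, so everything reduces to showing that the diagonal tail $n_k\int_{D_k}(\rho-\rho^{2R})$ is small, uniformly for large $k$, as $R\to\infty$; equivalently, that $\rho$ is uniformly integrable with respect to the probability measures $\nu_k=n_k\,\mu^2|_{D_k}$. The difficulty is that Lemma~\ref{cut} only delivers smallness of $\mnorm{\rho-\rho^{2R}}$, hence of $\Lnorm{\rho-\rho^{2R}}$, and an $L^1(X^2)$ bound does not control an average over the set $D_k$ of vanishing measure $1/n_k$; moreover iterating the cut-off merely reproduces the same uncontrolled diagonal tail.

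The way I would break this is to use admissibility to localize the tail. Choosing reference points $p_j\in X_j$ and taking $2R>2\eps+\max_{a,b\ge1}\rho(p_a,p_b)$, the triangle inequality gives $\rho\le 2\eps+\rho(p_a,p_b)<2R$ on every $X_a\times X_b$ with $a,b\ge1$, so $\{\rho>2R\}\subseteq(X_0\times X)\cup(X\times X_0)$ and therefore
$$n_k\int_{D_k}(\rho-\rho^{2R})\le n_k\int_{D_k\cap\{\rho>2R\}}\rho\le 2\,n_k\int_{D_k\cap(X_0\times X)}\rho.$$
This last quantity equals $\int_{X_0}n_k\!\int_{\lambda_k(x)}\rho(x,y)\,d\mu(y)\,d\mu(x)$, so the remaining task is to show it is small by combining the smallness of $\mu(X_0)$ with the density theorem, which forces the blocks $\lambda_k(x)$ to concentrate near the diagonal, where $\rho$ is small. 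Establishing this diagonal concentration for the unbounded part --- in effect a Lebesgue-differentiation statement for $\rho(x,\cdot)$ along the blocks $\lambda_k(x)$ evaluated on the diagonal, or equivalently the uniform integrability of the family $x\mapsto n_k\int_{\lambda_k(x)}\rho(x,y)\,d\mu(y)$ --- is the delicate step on which all of part~2) hinges.
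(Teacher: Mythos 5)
Your part~1) is correct and is essentially the paper's own argument (contraposition via condition~6) of Theorem~\ref{kritdop} plus Cauchy--Schwarz). Your treatment of the \emph{bounded} case of part~2) is also correct, by a slightly different route than the paper's: you pass to conditional expectations $\mathbb{E}[\mathbf{1}_{X_a}\mid\lambda_k]$ and use their $L^2$-convergence, whereas the paper argues pointwise, using the density hypothesis to produce an exceptional set $Y_0$ of small measure off which any two points of the same block $\Delta_j$ must lie in the same piece $X_i$, so that $\rho\leq\eps$ on the diagonal blocks outside $E=\{x\in Y_0 \text{ or } y\in Y_0\}$. Both versions are sound.

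The genuine gap is exactly where you stop: the reduction from summable to bounded metrics, which you leave as an unproved ``delicate step.'' Your localization $\{\rho>2R\}\subseteq(X_0\times X)\cup(X\times X_0)$ reduces the problem to controlling $\int_{X_0}n_k\int_{\lambda_k(x)}\rho(x,y)\,d\mu(y)\,d\mu(x)$, but proving smallness of this quantity is essentially equivalent to the uniform integrability you are trying to establish, so the argument is circular as it stands. The paper closes this gap without any differentiation statement, by observing that the triangle inequality itself converts $L^1$ control into diagonal control \emph{for semimetrics}: averaging $\rho(x,y)\leq\rho(x,z)+\rho(z,y)$ over $x,y\in\Delta_k$, $z\in\Delta_m$ gives $A_{\sigma,\lambda}(k,k)\leq 2A_{\sigma,\lambda}(k,m)$, and averaging over $k,m$ yields
\begin{equation*}
\frac1n\tr A_{\sigma,\lambda}\leq 2\Lnorm{\sigma}
\end{equation*}
for every semimetric $\sigma$ and \emph{every} partition $\lambda$. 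Consequently, if $q$ is a semimetric dominating $|\rho-\rho'|$ almost everywhere, then $|\frac1n\tr A_{\rho,\lambda}-\frac1n\tr A_{\rho',\lambda}|\leq\frac1n\tr A_{q,\lambda}\leq 2\Lnorm{q}$, i.e.
\begin{equation*}
\Big|\frac1n\tr A_{\rho,\lambda}-\frac1n\tr A_{\rho',\lambda}\Big|\leq 2\mnorm{\rho-\rho'},
\end{equation*}
uniformly in $\lambda$. This is precisely the point you missed: a raw $L^1$ bound on $\rho-\rho^{2R}$ indeed says nothing about averages over the shrinking set $D_k$, but the m-norm bound does, because the dominating object is itself a semimetric. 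Combined with Lemma~\ref{cut}, which gives $\mnorm{\rho-\rho^{2R}}\leq 2\int_{\rho>R}\rho\,d\mu^2\to 0$ as $R\to\infty$ by summability, the bounded case that you already proved implies the general one. This is the one missing idea; everything else in your proposal is in order.
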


\begin{proof} 1) If  $\rho$ is not admissible, then, by
Theorem~\ref{kritdop}, there exist $c>0$
and a measurable set $Y$
of measure $\mu(Y)\geq c$ such that $\rho(x,y)\geq c$
for almost all pairs $x,y\in Y$. Put
$m_k=\mu(\Delta_k\cap Y)$. Then
$$
n^2\int_{\Delta_k^2} \rho d\mu^2\geq
cn^2m_k^2;
$$
summing over $k$ yields
$$
\tr A_{\rho,\lambda}\geq cn^2\sum_{k=1}^n m_k^2\geq cn (\sum_{k=1}^n m_k)^2
\geq c^3 n,
$$
so that the infimum in question is not less than
$c^3$, a contradiction.

2) First consider arbitrary $\rho$ and $\lambda$.
Averaging the triangle inequality
$\rho(x,y)\leq \rho(x,z)+\rho(y,z)$
over $x,y\in \Delta_k$, $z\in \Delta_m$ yields
$A_{\rho,\lambda}(k,k)\leq 2A_{\rho,\lambda}(k,m)$.
Now, averaging over the pairs
$k,m$, we see that
$$
\frac 1n \tr A_{\rho,\lambda}\leq 2\|\rho\|_{L_1}.
$$
This immediately implies that
$$|\frac1n \tr A_{\rho,\lambda}-
\frac1n \tr A_{\rho',\lambda}|\leq 2\|\rho-\rho'\|_m.$$
Since every summable admissible semimetric can be approximated in the
m-norm by its cut-offs
(Lemma~\ref{cut}), it suffices to prove the required assertion under the
assumption that the semimetric
$\rho$ is bounded.

Fix $\eps>0$ and find a partition
$X=\sqcup_{i=0}^N X_i$ of $X$ into a set
$X_0$ of measure less than $\eps$ and sets $X_1,\dots,X_N$ of
$\rho$-diameter less than $\eps$. That of the sets $X_i$ which contains
$y\in X$ will be denoted by $X(y)$, by analogy with
$\lambda(y)$. The Lebesgue density theorem
(more exactly, its assumption) implies the following:
the measure of the set of points $y$ for which
$$
\mu(X(y)\cap \lambda_k(y))\leq \frac1{2n_k}
$$
tends to zero as $k$ tends to infinity. Take the union of the set of such exceptional $y$'s with
$X_0$ and call the obtained set
$Y_0$ (here $Y_0$ depends on $k$ and has measure
$<\eps$ for large $k$). Put $n=n_k$ and denote the elements of the partition $\lambda_k$
by $\Delta_1,\dots,\Delta_n$.
In $\cup_{j=1}^n \Delta_j^2\subset X^2$
consider the set $E$ of points $(x,y)$ such that $x\in Y_0$
or $y\in Y_0$. Obviously,
$\mu^2(E)\leq 2\frac 1n \mu(Y_0)$.
Put $E_1=\cup \Delta_j^2\setminus E$.
Note that on $E_1$ the semimetric $\rho$
does not exceed
$\eps$ pointwise. Indeed, let $x,y\in \Delta_j$,
$x,y\notin Y_0$, $x\in X_i$, $y\in X_l$.
Then necessarily $i=l$, since otherwise summing up the inequalities
$\mu(\Delta_j\cap X_s)>\frac 1{2n}$
for $s=i,l$ leads to a contradiction. Thus
$$\int_{E_1} \rho d\mu^2\leq \eps \mu^2(E_1)\leq \eps/n$$
and $$\int_{E}\rho d\mu^2 \leq\, \dm{\rho} X \mu^2(E)\leq
2\eps\, \dm{\rho}X/n.
$$
Adding these two inequalities and recalling that $\eps$ is arbitrary yields
$$
\int_{\cup\Delta_j^2} \rho d\mu^2 =o(1/n),
$$
as required.
\end{proof}

Let $x_1,\dots,x_n$ be points chosen at random and independently from $X$.
The classification theorem \cite{Riem,umn04} says that a metric
triple is determined up to isomorphism by the corresponding
distribution of the distance matrices
$\rho(x_i,x_j)_{1\leq i,j\leq n}$ (for all $n$). Therefore, the
admissibility of a metric must also be expressible in terms
of this distribution. Among various ways to give such a description, we confine
ourselves to the following one.

\begin{theorem}
{\rm1)} If a metric $\rho$ is not admissible, then there exists
$c>0$ such that the probability of the following event tends to one as
$n$ tends to infinity:

\medskip
{\rm($P_c$)} there is a set of indices
$I\subset \{1,2,\dots,n\}$ of cardinality at least $cn$ such that
$\rho(x_{i},x_{j})\geq c$ for all distinct
$i,j\in I$.
\medskip

{\rm2)} If a metric $\rho$ is admissible, then for every
$c>0$ the probability of
$P_c$ tends to zero.

In both cases, the rate of convergence to $1$ or $0$ is at least exponential
in $n$.
\end{theorem}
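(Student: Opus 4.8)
The plan is to reduce both statements to large-deviation (Chernoff) estimates for binomial counting variables, combined with the structural descriptions of admissibility supplied by Theorem~\ref{kritdop}. In both directions the distance conditions on the sampled points will hold almost surely, because the relevant exceptional sets of pairs have $\mu\times\mu$-measure zero and there are only finitely many (at most $\binom{n}{2}$) pairs to control, so a union bound over pairs costs nothing.

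For part~1, I would invoke condition~6) of Theorem~\ref{kritdop}: since $\rho$ is not admissible, there is a set $A$ with $\mu(A)>0$ on which the essential infimum of $\rho$ over $A\times A$ is positive, i.e.\ $\rho(x,y)\geq c_1$ for $\mu^2$-almost all $(x,y)\in A\times A$, for some $c_1>0$. Let $I=\{i:x_i\in A\}$. Since the $x_i$ are i.i.d., $|I|$ is binomial with mean $n\mu(A)$, so by a Chernoff bound $|I|\geq\tfrac12 n\mu(A)$ with probability at least $1-e^{-\alpha n}$. Moreover, for each fixed pair $i\ne j$ the event $\{x_i,x_j\in A,\ \rho(x_i,x_j)<c_1\}$ is null, hence almost surely every pair of sampled points inside $A$ has $\rho$-distance at least $c_1$. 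Taking $c=\min(c_1,\tfrac12\mu(A))$ then makes $(P_c)$ hold with probability tending to $1$ exponentially fast.

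For part~2, fix $c>0$ and choose $\eps<c$. By condition~5) of Theorem~\ref{kritdop} there is a finite partition $X=X_0\cup X_1\cup\dots\cup X_k$ with $\mu(X_0)<\eps$ and $\esdm{\rho}{X_j}<\eps$ for $j\geq1$; here $k=k(\eps)$ is fixed, independent of $n$. Almost surely any two sampled points lying in the same block $X_j$ ($j\geq1$) are at distance $<\eps<c$, so they cannot both belong to a set $I$ witnessing $(P_c)$. Consequently such an $I$ contains at most one index per block $X_1,\dots,X_k$ together with whatever indices fall into $X_0$, giving $|I|\leq k+N_0$, where $N_0=\#\{i:x_i\in X_0\}$. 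Hence $(P_c)$ forces $N_0\geq cn-k$, which for large $n$ is a deviation of $N_0\sim\mathrm{Bin}(n,\mu(X_0))$ above its mean $n\mu(X_0)<n\eps<cn$ by a fixed positive fraction. The upper-tail Chernoff bound then yields $\mathbb{P}(P_c)\leq e^{-\beta n}\to0$.

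The routine calculations (the explicit Chernoff exponents and the null-set bookkeeping) are standard; the only genuine step is the pigeonhole observation in part~2 --- that a small-diameter partition together with the distance constraint caps $|I|$ by $k+N_0$ --- after which uniform exponential rates in $n$ follow automatically, since $k$ and the partition depend only on $\eps$ and not on $n$.
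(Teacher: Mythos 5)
Your proof is correct and follows essentially the same route as the paper's: part~1 samples points from a positive-measure set on which $\rho$ is essentially bounded below (the negation of admissibility) and combines a Chernoff bound on the count with the null-set argument for pairs, while part~2 uses a finite small-diameter partition, the pigeonhole bound $|I|\leq k+N_0$, and an upper-tail Chernoff estimate for the number of points in the exceptional block. The only (harmless) deviation is that you work with essential diameters via condition~5) of Theorem~\ref{kritdop}, which costs an extra almost-sure union bound over pairs, whereas the paper partitions into blocks of genuine diameter $\leq c/2$ so that the same-block distance bound holds deterministically.
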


\begin{proof}
1) Find $c>0$ and a measurable set $Y\subset X$ of measure $2c$
such that $\rho(x,y)\geq c$ for almost all pairs $x,y\in Y$. Then on
the average $Y$ contains
$2cn$ points among $x_1,\dots,x_n$, and the probability that the number
of such points is at most
$cn$ tends to $0$ exponentially in $n$ (by standard large deviations estimates in the Law of Large Numbers for Bernoulli independent summands). The probability that a pair of such
points is at distance at most
$\eps$ is zero. Therefore, with probability tending to one
exponentially, a required set of indices does exist.

2) Let $\rho$ be an admissible metric. Partition $X$ into a set
$X_0$ of measure
$<c/2$ and sets $X_1,\dots,X_N$ of
$\rho$-diameter $\leq c/2$. Note that if a required set of indices $I$
is found, then for every  $i=1,\dots,N$ the point
$x_k$ lies in $X_i$ for at most one index
$k\in I$. Therefore, for
$n>10N/c$ this implies that at least
$2cn/3$ points among $x_1,\dots,x_n$ fall into
$X_0$. But, again, this happens with probability exponentially small
in $n$.
\end{proof}

\medskip
{\bf Remark.} In conclusion of this section, we mention
an important problem
from the theory of metric measure spaces.

We define an integral averaging operator as follows. Let
$\rho \in L^2_{\mu \times \mu}(X\times X)$. Consider the following linear operator
$I_{\rho}\equiv I$:
$$ 
I(f)(y) \equiv \int_X \rho(x,y)f(x)d\mu(x),$$
where $f \in L_{\mu}^2(X)$. Roughly speaking, this operator measures
the weighted average distance between the points of the space.

Obviously, $I$ is a self-adjoint Hilbert--Schmidt operator in
$L_{\mu}^2(X)$. It is of great interest to study its spectrum and, in
particular, the leading eigenvalues. It may happen that some metric
invariants of an action of a group $G$ on $X$ can be
expressed in terms of
joint characteristics of the operator $I$ and the unitary operators
$U_g$, $g\in G$. Since the spectrum of the random distance matrix is
a complete invariant of
an admissible triple, it is of interest to
study this spectrum and compare it with the spectrum of the averaging
operator~$I$.

\newpage

\section{The dynamics and $\eps$-entropy
of admissible metrics;
discreteness of the spectrum.}

\subsection{Scaling entropy and the statement of the discreteness
criterion}

The theory of admissible metrics and semimetrics which we considered
in the first chapter, being of interest in itself, also leads to new
applications to ergodic theory. These applications rely on replacing
the dynamics of measure-preserving transformations in the original
measure spaces by the dynamics of the associated transformations in
the spaces of admissible metrics. This should be compared with the
transition $T \mapsto  U_T$ from measure-preserving transformations
to unitary operators in $L^2$ in the early 1930s. Let $T$ be a
transformation of a Lebesgue space $(X,\mu)$ preserving the measure
$\mu$; then we can consider the transformation $R_T$ of the cone of
admissible metrics ${\mathcal A}{\rm dm}(X,\mu)$ defined by the
formula $R_T(\rho)(x,y)=\rho(Tx,Ty)$; The set of the admissible
metrics of type $R_T^n(\rho)(x,y)\equiv \rho_n(x,y)=\rho(T^nx,T^ny);
n\in \mathbb Z$ we called $T$-orbit of $\rho$. Introduce the
averaging operator $M_n=\frac{1}{n}\sum_{k=0}^{n-1} R_T^k$:
$$(M_n\rho)(x,y)=\frac{1}{n}\sum_{k=0}^{n-1}\rho(T^k x,T^k y).$$
 It is clear that $M_n$ sends every
semimetric $\rho$ to a new semimetric $\rho_n^{av}:=M_n\rho$, and we are
interested in the study of its properties as $n$ tends to infinity.

 In fact, we study the action of the
unitary operator $U_T\bigotimes U_T$ and averages of its powers.
However, the crucial point is that we consider this action on the
cone of admissible metrics rather than simply in $L^1$.

Recall the definition of the scaling entropy of an automorphism
introduced in \cite{usp,Mark} (see also \cite{VGor}).

\begin{definition}
Let $T$ be an automorphism of a Lebesgue space $(X,\mu)$.
For an arbitrary $\eps>0$ and an arbitrary semimetric $\rho$, we define the
\emph{class of scaling sequences} for the automorphism $T$ and the semimetric
$\rho$ as the family of all nondecreasing sequences
$\{c_n\}$ such that
 $$0<\liminf\limits_{n\to \infty}
\frac{\Ent{\eps}{\rho_n^{av}}}{c_n}\leq
\limsup\limits_{n\to \infty}\frac{\Ent{\eps}{\rho_n^{av}}}{c_n}<\infty.$$

All sequences in the same class are equivalent.
If the limit exists, it is called the \emph{scaling
$\eps$-entropy} of $T$ with respect to the semimetric $\rho$
and scaling sequence $\{c_n\}$.
Finally,
if the limit of these $\eps$-entropies as $\eps \rightarrow 0$
exists with some normalization in $\eps$, then
it is called the \emph{scaling entropy} of $T$
(with respect to the semimetric $\rho$, scaling sequence
and normalization).
 \end{definition}

In the calculations performed so far in concrete
examples, the latter limit does
exist and does not depend on the choice of an admissible metric. A
special role is played by the
class of bounded nondecreasing scaling sequences.

The main result of this paper is the following theorem.

\begin{theorem}
\label{pointspect}
Let $T$ be a measure-preserving
automorphism of a Lebesgue space
$(X,\mu)$. Then the following conditions are equivalent:
\begin{itemize}
\item[\rm1)] $T$ has a purely discrete spectrum.
\item[\rm2)] For every admissible semimetric
$\rho \in L^1(X^2)$ and every $\eps>0$, the scaling sequences are
bounded.
\item[\rm3)] For some admissible metric
$\rho \in L^1(X^2)$ and every $\eps>0$, the scaling sequences are
bounded.
\end{itemize}
\end{theorem}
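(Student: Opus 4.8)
The plan is to establish the cycle of implications (1)$\Rightarrow$(2)$\Rightarrow$(3)$\Rightarrow$(1); the middle implication is immediate, since an admissible metric is in particular an admissible semimetric, so (2) applied to any one fixed admissible metric yields (3). I read the conclusion ``the scaling sequences are bounded'' as ``$\sup_n\Ent{\eps}{\rho_n^{av}}<\infty$ for every fixed $\eps>0$'', which is exactly the condition permitting a bounded nondecreasing $c_n$ in the definition. The engine of both nontrivial implications is the spectral dictionary for the operator $U_T\otimes U_T$ on $L^2(X^2)$: a vector has a precompact orbit if and only if its spectral measure is purely atomic, i.e.\ if and only if it lies in the closed linear span of the eigenvectors of $U_T\otimes U_T$. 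Splitting $L^2(X)=H_{pp}\oplus H_c$ into the point and continuous spectral subspaces of $U_T$ and noting that the convolution of a purely atomic spectral type with a continuous one (and of two continuous ones) is continuous, one sees that every eigenvector of $U_T\otimes U_T$ lies in $H_{pp}\otimes H_{pp}$, where the spectrum is pure point. Hence the almost periodic (precompact orbit) subspace of $U_T\otimes U_T$ is precisely $H_{pp}\otimes H_{pp}$, i.e.\ the functions measurable with respect to $\mathcal K\otimes\mathcal K$, where $\mathcal K$ is the Kronecker factor generated by the eigenfunctions of $U_T$; and $T$ has discrete spectrum exactly when $\mathcal K$ is the full $\sigma$-algebra.

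For (1)$\Rightarrow$(2), discreteness gives $H_{pp}=L^2(X)$, so $U_T\otimes U_T$ has pure point spectrum on all of $L^2(X^2)$ and every $L^2$-vector has a precompact orbit. For an admissible $\rho\in L^1$ I first pass to cut-offs: each $\rho^R\in L^\infty\subset L^2$ has a precompact $L^2$-orbit, hence a precompact $L^1$-orbit. Since $T$ is a measure automorphism the $\eps$-entropy is $R_T$-invariant, so the orbit $\{T^n\rho^R\}$ has the single finite value $\Ent{\eps}{\rho^R}$; by Lemma~\ref{lem3} its $L^1$-closure consists of admissible semimetrics, and by Corollary~\ref{corcomp} the orbit is precompact in the m-norm. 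As the m-norm is $T$-invariant, Lemma~\ref{cut} gives $\sup_n\mnorm{T^n\rho-T^n\rho^R}=\mnorm{\rho-\rho^R}\to0$, so the orbit of $\rho$ is a uniform m-norm limit of precompact sets and is therefore itself precompact. Its closed convex hull is then m-norm compact, consists of admissible semimetrics by Corollary~\ref{c1}, and contains every average $\rho_n^{av}$; Corollary~\ref{cor2} now bounds the $\eps$-entropies uniformly over this compact set, which is (2).

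For (3)$\Rightarrow$(1), the hypothesis on $\rho_n^{av}$ lets me invoke Theorem~\ref{kriauto} to conclude that the orbit $\{T^n\rho\}$ is precompact in the m-norm. Cutting off as before, each $\rho^R$ has a precompact $L^2$-orbit, so $\rho^R\in H_{pp}\otimes H_{pp}$; letting $R\to\infty$ with $\rho^R\uparrow\rho$ a.e.\ shows that $\rho$ itself is $\mathcal K\otimes\mathcal K$-measurable. It remains to prove that an \emph{admissible metric} that is $\mathcal K\otimes\mathcal K$-measurable forces $\mathcal K$ to be the full $\sigma$-algebra. I apply Theorem~\ref{cond} to the measurable partition $\xi$ into fibres of the factor map $\pi:X\to X/\mathcal K$, obtaining for a.e.\ fibre $C$ an admissible conditional triple $(C,\mu^C,\rho^C)$. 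Choosing the dense sequence $\{x_n\}$ in the construction of Theorem~\ref{cond} among the (full-measure set of) points for which the slice $f_n=\rho(\cdot,x_n)$ is $\mathcal K$-measurable, each $f_n$ is $\mu^C$-a.e.\ constant on a.e.\ fibre $C$; since the conditional metric is given there by $\rho^C(z_1,z_2)=\inf_n\{f_n(z_1)+f_n(z_2)\}$, it is $\mu^C\times\mu^C$-a.e.\ constant on $C\times C$. An admissible metric that is a.e.\ constant on $C^2$ must have that constant equal to $0$ (a positive value contradicts criterion~6) in Theorem~\ref{kritdop}), and $\rho^C=0$ a.e.\ together with the separation property of a metric forces $\mu^C$ to be a single atom. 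Thus a.e.\ fibre is trivial, $\pi$ is invertible mod $0$, $\mathcal K$ is the full $\sigma$-algebra, and $T$ has discrete spectrum.

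The hard part is the final step of (3)$\Rightarrow$(1). The identity $\rho=\tilde\rho\circ(\pi\times\pi)$ that expresses $\mathcal K\otimes\mathcal K$-measurability holds only $\mu^2$-almost everywhere, and the set of same-fibre pairs is $\mu^2$-null, so it carries \emph{no} information about the restriction of $\rho$ to any $C\times C$; one cannot simply ``restrict $\rho$ to the diagonal of fibres.'' This is exactly the difficulty flagged in Section~1.3. The resolution is that the canonical conditional metric of Theorem~\ref{cond} is built from the one-variable functions $f_n=\rho(\cdot,x_n)$ rather than by any such restriction, and $\mathcal K$-measurability of these $f_n$ collapses $\rho^C$ to a constant; admissibility of $\rho^C$, via criterion~6) of Theorem~\ref{kritdop}, then yields the contradiction with $C$ nontrivial. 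Everything else reduces to the compactness machinery of Chapter~1 and the elementary $R_T$-invariance of the $\eps$-entropy.
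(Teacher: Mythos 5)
Your proposal is correct, and its overall architecture coincides with the paper's: for $1)\Rightarrow 2)$ both proofs cut off $\rho$, use discreteness of the spectrum of $U_T\otimes U_T$ to get $L^2$- (hence $L^1$-) precompactness of orbits of bounded metrics, transfer this to the orbit of $\rho$ itself via Lemma~\ref{cut}, and then run Lemma~\ref{lem3}, Corollary~\ref{corcomp} and Corollary~\ref{cor2} on the closed convex hull (your reorganization --- uniform m-norm approximation of the orbit of $\rho$ by the precompact orbits of its cut-offs, instead of the paper's separation-and-contradiction argument --- is an equivalent way to the same precompactness); and for $3)\Rightarrow 1)$ both invoke Theorem~\ref{kriauto} together with the spectral fact that precompact-orbit vectors form the closed span of eigenvectors, identified with the functions measurable over the square of the Kronecker $\sigma$-algebra. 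The genuine divergence is the endgame of $3)\Rightarrow 1)$. The paper argues directly with sections: measurability of $\rho$ with respect to $\mathfrak{B}\otimes\mathfrak{B}$ forces $\rho(u,\cdot)=\rho(v,\cdot)$ a.e.\ for $u,v$ in the same fiber, while admissibility (positive measure of balls, item~4 of Theorem~\ref{kritdop}) and the metric axiom force sections of distinct points to differ on a set of positive measure; hence $\rho$ is a ``pure function'' and the fibers are singletons. You instead disintegrate over the Kronecker partition using Theorem~\ref{cond}: choosing the dense sequence $\{x_n\}$ among points with $\mathcal{K}$-measurable slices, the conditional metric $\rho^C(z_1,z_2)=\inf_n\{f_n(z_1)+f_n(z_2)\}$ becomes a.e.\ constant on a.e.\ fiber, item~6 of Theorem~\ref{kritdop} forces that constant to vanish, and the separation axiom of a true metric makes $\mu^C$ a Dirac mass. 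Both endgames rest on the same two ingredients (admissibility plus the fact that $\rho$ is a metric, not merely a semimetric), but your route is an actual application of the conditional-metric machinery of Section~1.3, which the paper develops yet never uses in its own proof of this theorem; the price is a longer argument and the extra care (which you correctly take) in selecting the base points $x_n$ from the full-measure set of good slices, since the set of same-fiber pairs is $\mu^2$-null and no naive restriction of $\rho$ is available. You also make explicit a point the paper leaves terse: that eigenvectors of $U_T\otimes U_T$ lie in $H_{pp}\otimes H_{pp}$, via continuity of spectral types under convolution, so that precompactness of the $T^{\otimes 2}$-orbit really does yield $\mathcal{K}\otimes\mathcal{K}$-measurability.
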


\begin{remark}\label{avmetric}
 By
individual ergodic theorem the limiting average semimetric
$$\rho^{av}=\lim_{n \to \infty}\rho_n^{av}(x,y)$$
does exist almost everywhere. Results of Chapter
1 show that it is admissible if and only
if for any $\eps>0$ the scaling sequences of $\rho_n^{av}$
are uniformly bounded by $n$
(``if'' part follows from Lemma \ref{lem3},
``only if'' part from Corollary \ref{cor2}).
It allows to reformulate Theorem \ref{pointspect},
replacing conditions 2) to \textit{2')
For every admissible semimetric
$\rho \in L^1(X^2)$ the imiting average metric $\rho^{av}$ is
admissible}; analagously for condition 3).
\end{remark}

The implication $2)\Rightarrow 3)$ is trivial,
and the proof of the other two
ones is given below; the proof relies on the obtained results on
admissible metrics.

\subsection{Proof of the main theorem; the
implication $1)\Rightarrow 2)$.}

Here we use the result obtained in the first
chapter on the precompactness of a family of
admissible metrics in the m-norm.

Since automorphism $T$ has purely discrete spectrum, tensor square
of it - $T^{\otimes 2}$ (acting on $X\times X$) also has purely
discrete spectrum. It implies that the $T^{\otimes 2}$-orbit of any
function $f\in L^2(X\times X)$ is precompact. Take any admissible
semimetric $\rho$ on $X$.

Our nearest goal is to prove that $T$-orbit of
$\rho$ is precompact in $L^1(X\times X)$

Assume the contrary, then for some
$c>0$ and some infinite
subset ${\cal N}\subset \mathbb{N}$ we have
$\|\rho_n-\rho_k\|\geq c$ for all distinct $n,k\in {\cal N}$.
Choose large $M>0$ so that $\|\rho-\rho^{M}\|<c/3$,
where $\rho^M$ is a cut-off of $\rho$ on level $M$.
Since taking cut-off commutes with action of $T$,
we get
$$
\|\rho_n^M-\rho_k^M\|\geq \|\rho_n-\rho_k\|-\|\rho_n-\rho^M_n\|-
\|\rho_k-\rho_k^M\|\geq c-c/3-c/3=c/3
$$
for all $n,k\in {\cal N}$. Hence for a bounded metric
$\rho^M$ its $T$-orbit also has a separated infinite subset.
But it belongs to $L^2(X\times X)$, hence its orbit is precompact
even in $L^2$, and so in $L^1$. A contradiction.

So we see that $T$-orbit of $\rho$ is precompact in $L^1$, hence its
closure in $L^1$ is compact. But $\rho$ is admissible, hence by
Lemma \ref{lem3} the closure of $T$-orbit of $\rho$ contains only
admissible metrics. Then it is compact also in m-norm by Corollary
\ref{corcomp}. So, its convex hull is
precompact in m-norm. Hence $\eps$-entropies of the metrics from
this convex hull are uniformly bounded by Corollary \ref{cor2},
as desired.

\begin{remark}
Actually, the following more general fact is proved.
Discreteness
of spectrum of $T$ implies that $\eps$-entropies of all convex
combinations of semimetrics in $T$-orbit of a given admissible
semimetric $\rho$ are uniformly bounded (but not only
for averages over initial segments).
\end{remark}

A typical and by von Neumann classical theorem general
example of the transformation with discrete spectrum
is a rotation on a compact abelian group. By
Remark \ref{avmetric}
and already proved part of Theorem \ref{pointspect}
we see that averaged (over orbit of the rotation)
metric is then admissible. It is clear that instead
of averaging over orbit of
a rotation we can consider the averaging over the closure
of the orbit, which coincides with
the whole group in the ergodic case.
Below we prove the analog of that fact for general
(not necessary Abelian) compact group. The proof
is very similar to the above proof of part
of Theorem \ref{pointspect}. Also, we
prove that admissible rotation-invariant metric
must be continuous.

\begin{statement}
For an arbitrary admissible metric $\rho$ on a compact group $G$
endowed with Haar measure, the average of the metric $\rho$ with
respect to the compact subgroup of
the group of translations is admissible. The
average over whole group is, moreover, invariant,
and hence continuous.
\end{statement}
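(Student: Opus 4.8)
The plan is to follow the proof of the implication $1)\Rightarrow 2)$ of Theorem~\ref{pointspect} almost verbatim, with ``discrete spectrum of $T$'' replaced by ``compactness of the translation subgroup,'' and then to append a short Steinhaus-type argument for continuity of the invariant average. Fix a compact subgroup $H$ of the group of left translations of $G$, write $R_h\rho(x,y)=\rho(hx,hy)$, and let $\nu$ be the normalized Haar measure on $H$; the object of interest is $\rho^H=\int_H R_h\rho\,d\nu(h)$.

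First I would establish that the orbit $O=\{R_h\rho:h\in H\}$ is compact in $L^1(G\times G)$. The diagonal translation action of $H$ on $L^1(G\times G)$ is strongly continuous, so $h\mapsto R_h\rho$ is continuous into $L^1$, and $O$ is the continuous image of the compact set $H$. Moreover each translation $x\mapsto hx$ is a measure-preserving isomorphism of $(G,\mu,\rho)$ onto $(G,\mu,R_h\rho)$, so every $R_h\rho$ is admissible and $\Ent{\eps}{R_h\rho}=\Ent{\eps}{\rho}$ for all $h$; in particular the $\eps$-entropies over $O$ are bounded. Because $O$ is a set of admissible semimetrics that is compact in $L^1$, Corollary~\ref{corcomp} makes it compact in the m-norm as well.

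Next I would pass from the orbit to its average. By the Banach-space fact that the closed convex hull of a norm-compact set is compact, $K=\overline{\mathrm{conv}}(O)$ is compact in the m-norm (recall $\mathbb{M}$ is complete). A finite convex combination of admissible semimetrics is again admissible --- refining the $\eps$-partitions of the summands produces an $\eps$-partition of the combination, so each combination has finite $\eps$-entropy --- hence $\mathrm{conv}(O)$ is a precompact set of admissible semimetrics, and Corollary~\ref{cor2} then guarantees that its m-norm closure $K$ consists of admissible semimetrics with uniformly bounded $\eps$-entropies. Finally, $h\mapsto R_h\rho$ is continuous even in the m-norm (its $L^1$-continuity together with Theorem~\ref{lem4} gives this, since all values are admissible), so $\rho^H$ is the Bochner integral in $\mathbb{M}$ of a continuous map into the compact convex set $K$ and therefore lies in $K$; thus $\rho^H$ is admissible. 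I would emphasize that the naive entropy bound for a convex combination grows with the number of terms, so the real content here is the m-norm precompactness combined with Corollary~\ref{cor2}, not any explicit estimate --- this is the crux of the first assertion.

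For the whole group $H=G$, invariance $\rho^G(gx,gy)=\rho^G(x,y)$ follows at once from the right-invariance (unimodularity) of Haar measure by substituting $h\mapsto hg$. Writing $\phi(z)=\rho^G(e,z)$, invariance gives $\rho^G(x,y)=\phi(x^{-1}y)$ with $\phi\ge 0$, $\phi(e)=0$, $\phi(z)=\phi(z^{-1})$, $\phi(zw)\le\phi(z)+\phi(w)$, and $|\phi(z)-\phi(z')|\le\phi(z'^{-1}z)$. The main remaining obstacle, and the genuinely new ingredient, is to deduce continuity from admissibility plus invariance. By criterion~4) of Theorem~\ref{kritdop} the ball $\{y:\rho^G(x,y)<\eps\}$ has positive measure for a.e.\ $x$; by left-invariance of Haar measure this measure equals $\mu(A_\eps)$ with $A_\eps=\{z:\phi(z)<\eps\}$, so $\mu(A_\eps)>0$ for every $\eps>0$. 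Applying the Steinhaus--Weil theorem to $A_\eps$, the set $A_\eps^{-1}A_\eps$ contains a neighborhood $U$ of $e$, and for $u=a^{-1}b\in U$ with $a,b\in A_\eps$ one has $\phi(u)\le\phi(a)+\phi(b)<2\eps$; hence $\phi$ is continuous at $e$, and the inequality $|\phi(z)-\phi(z')|\le\phi(z'^{-1}z)$ propagates continuity to all of $G$, so $\rho^G$ is continuous. The delicate technical point to handle with care is the measurability of the one-variable function $\phi$ --- a priori it is a restriction of $\rho^G$ to a null diagonal slice --- which is precisely where invariance is used to rewrite it as the average $\phi(z)=\int_G\rho^G(x,xz)\,d\mu(x)$, measurable by Fubini.
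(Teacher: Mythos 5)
Your proof is correct and follows essentially the same route as the paper's: continuity of translation in the mean gives a compact orbit in $L^1$, Corollary~\ref{corcomp} upgrades this to m-norm compactness, the convex hull together with Corollary~\ref{cor2} (the paper instead closes the hull in $L^1$ and cites Lemma~\ref{lem3}) yields admissibility of the average, and the Steinhaus theorem applied to a ball of positive measure gives continuity of the invariant average. Your added care about the Bochner integral, unimodularity, and the measurability of the one-variable function $\phi$ only fills in details the paper leaves implicit.
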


\begin{proof}
Note that the map $G\rightarrow L^1(G^2)$:
$g\rightarrow \rho_g(x,y):=\rho(gx,gy)$ is continuous
(by continuity of rotation in mean). Hence its
image $I$ is compact in $L^1$. Then it is compact
also in m-norm
by Corollary \ref{corcomp}.
Then its convex hull is precompact in m-norm and
so $\eps$-entropies of its elements are uniformly bounded
by Corollary \ref{cor2}.
The averaged metric $\int_H \rho_g d\mu_H$
(where $H$ is a compact subgroup of $G$, $\mu_H$ is
Haar measure on $H$)
lies in the closed (say, in $L^1$)
convex hull of $I$ and hence is admissible by Lemma
~\ref{lem3}. Now we
will show that the averaged metric over whole $G$
is continuous. Since this metric is
translation-invariant, it suffices to prove that it is
continuous at unity. The admissibility
criterion (Theorem~\ref{kritdop}) says
that for almost all
$x \in G$, the ball $B=\{y\in G \colon \rho(x,y)\leq r\}$ of radius $r>0$
centered at $x$ has positive measure.
But then by Steinhaus theorem (see, for example \cite{Str})
the set
$B\cdot B^{-1}$ contains a neighborhood of unity,
and for every
$z\in B\cdot B^{-1}$, by the triangle inequality and
the invariance of $\rho$, we have
$\rho(1,z)\leq 2r$, which proves that the metric is continuous at
unity.
\end{proof}

\subsection{Proof of the implication $3)\Rightarrow 1)$}

Now we will prove 
implication
$3)\Rightarrow 1)$:
if there exists an admissible metric
$\rho$ such that the corresponding class of scaling sequences
consists of bounded sequences for every
$\eps>0$, then the automorphism
$T$ has a purely discrete spectrum. Clearly, one may
assume that $\rho$ is bounded by replacing it
to the cut-off if necessary.

We use the following known criterion of discreteness of spectrum for a unitary
operator $U$ in Hilbert space: \emph{$U$-orbit of any element is
precompact.} This is the corollary of the spectral theorem for
unitary operator. Recall that slightly more general fact is true:
$U$-orbit of $x$ is precompact if and only if $x$ lies in the closed
span of eigenvectors of $U$. Finally, for the
unitary operator corresponding to the
automorphism $T$ on the Lebesgue space $(X,\mathfrak{A},\mu)$,
this closed span is a space of functions
in $L^2$, measurable w.r.t. some $\sigma$-subalgebra
$\mathfrak{B}\subset \mathfrak{A}$
(or, in other words, the space of functions, constant
on almost all parts of some measurable partition $\xi$).
For the square-summable function of two
variables $f(x,y)$ on $X\times X$ precompactness of its
$T$-orbit $\{f(T^nx,T^ny),n=1,2,\dots\}$
therefore implies that $f$ is measurable with
respect to sub-algebra $\mathfrak{B}^2$. In particular,
for almost all $x$ functions $f(x,\cdot)$ are
$\mathfrak{B}$-measurable and for almost all parts
of corresponding partition $\xi$ the functions $f(x,\cdot)$
coincide a.e. for a.e. $x$ from this part. Assume
that it holds for the bounded
(or just square summable) admissible metric $f=\rho$.
But then for any two
points $u,v$ the functions $f(u,\cdot)$ and $f(v,\cdot)$
are different on the ball $B(u,\rho(u,v)/3)$, which has positive
measure for almost all $u$ by Theorem \ref{kritdop}.
In other words, for almost all $u$ there is no $v$
such that functions $f(u,\cdot)$ and $f(v,\cdot)$ coincide
a.e. (Such functions are called in \cite{Clas} ``pure functions of
two variables'', this property is important in the
classification theorem.)
It implies that partition $\xi$ is trivial and so the spectrum
of $T$ is purely discrete.

Now for finishing the proof of implication $1)\Rightarrow 3)$ it
suffices to combine above general techniques and Theorem
\ref{kriauto}.


\subsection{Further remarks}

\subsubsection{Relation to A-entropy}
In \cite{Kush}, another discreteness criterion for the spectrum of an
automorphism was proved; it is also based on the notion of
entropy (in that case, sequential, or A-, or
Kirillov--Kushnirenko entropy). According to this criterion, the
spectrum of an automorphism $T$ is discrete if and only if
\begin{equation}\label{ku}
\limsup_{n \to \infty}\frac{1}{n}H\big(\prod_{k=1}^n T^{i_k}\xi\big) =0
\end{equation}
for every finite partition $\xi$ and an arbitrary sequence
${i_1<i_2< \dots}$ of positive integers. Here
$H(\cdot)$ is the entropy of a finite partition.
One can easily check that the entropy
$H(\cdot)$ in criterion~(\ref{ku}) can be replaced with the
$\eps$-entropy (when $\eps>0$ takes all positive values).
Kushnirenko's proof is based on the following two reductions.

(1) The spectrum of $T$ is discrete if and only if the set of partitions
$\{T^n\xi\,|\,n=1,2,\dots\}$ is precompact
with respect to some natural metric on partitions. Since the number of
parts in the partition $T^n\xi$ is fixed, various natural metrics turn
out to be equivalent. For our purposes, it is convenient to consider
the distance in
$L^1$ or in the m-norm between the block semimetrics corresponding
to partitions.

(2) Such a family is precompact if and only if the normalized entropies
(\ref{ku}) tend to zero.

The product of partitions appearing in~(\ref{ku}) corresponds to
the maximum of the associated block metrics. However,
our main Theorem~\ref{pointspect} involves averages of
semimetrics. So, the precompactness criterion (2)
is to be compared with our Theorem \ref{kriko}
It is not
a complete analog of Kushnirenko's criterion:
first, it applies to general admissible semimetrics; second,
deals with averages rather than maxima; third,
uses the entropy of metrics
rather than partitions. In the particular case
where one deals with the
convex hull of a family of cut semimetrics corresponding to
partitions into two parts of equal measure, Kushnirenko's
criterion follows from
the condition of criterion in Theorem \ref{kriko}.
In the general situation, the relation
between two criteria is not quite clear; for instance, we do not know
any exact generalization of Kushnirenko's criterion
to the case of general semimetrics.

\subsubsection{Conjectures}

The asymptotics of
the scaling entropy for an arbitrary automorphism is not known.
Most probably, in the
other extreme case, i.e., for actions with positive Kolmogorov
entropy, the answer can be obtained
in the same way as in the discrete spectrum case.
Namely, we state the following conjecture.

\begin{conjecture}
For any automorphism $T$ with positive entropy,
the scaling sequence has order $n$. In other words, for every
admissible metric $\rho$,
$$\lim_{\epsilon \to 0}\lim_{n \to \infty}\frac{\Ent{\eps}{\rho^T_n}}{\phi(\epsilon)n}=h(T),$$
where $\phi(\epsilon)$ is a function, possibly depending on
$\rho$, and $h(T)$ is the classical entropy of $T$.
\end{conjecture}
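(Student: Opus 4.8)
The plan is to prove matching linear upper and lower bounds for $\Ent{\eps}{\rho_n^{av}}$, with the Shannon--McMillan--Breiman (SMB) theorem applied to a finite partition as the engine. Since $h(T)<\infty$ one may fix, by Krieger's generator theorem, a finite generating partition; the case $h(T)=+\infty$ is treated by exhausting with generating sequences of partitions of unbounded entropy. I first reduce to bounded $\rho$, which is harmless: by Lemma~\ref{cut} the cut-offs $\rho^R$ approximate $\rho$ in the m-norm, and $\Ent{\eps}{\cdot}$ is stable under small m-norm perturbations through Proposition~\ref{prop1}.

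For the upper bound I compare $\rho$ from above by a fine block metric. Using item~5 of Theorem~\ref{kritdop} I choose, for small $\delta>0$, a finite partition $\eta_\delta$ whose cells have $\rho$-essential diameter $<\delta$ off a set of measure $<\delta$; then $\rho\le \delta+R\,\rho_{\eta_\delta}$ up to a small exceptional set, so a normalized-Hamming ball of the $\eta_\delta$-names of radius $r$ sits inside a $\rho_n^{av}$-ball of radius $\delta+Rr$. Covering the $\approx 2^{h(T,\eta_\delta)n}$ typical names by Hamming balls of exponential volume $v(r)=H(r)+r\log(|\eta_\delta|-1)$ yields $\Ent{\delta+Rr}{\rho_n^{av}}\le \big(h(T,\eta_\delta)-v(r)\big)n\,(1+o(1))$. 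Sending $r\to 0$ for fixed $\delta$, then $\delta\to 0$ (so that $h(T,\eta_\delta)\uparrow h(T)$), gives $\lim_{\eps\to0}\limsup_n \Ent{\eps}{\rho_n^{av}}/n\le h(T)$, already with $\phi\equiv 1$.

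The lower bound is the delicate direction. One wants to bound from above the measure of any set of $\rho_n^{av}$-diameter $<\eps$. If $\rho$ dominated a block metric uniformly, $\rho\ge c\,\rho_\zeta$, the same SMB count in the packing direction would give $\Ent{\eps}{\rho_n^{av}}\ge \big(h(T,\zeta)-v_\zeta(\eps/c)\big)n$, and refining $\zeta$ while letting $c\to 0$ would recover $h(T)$. The trouble is that an admissible metric separates points only in measure (item~6 of Theorem~\ref{kritdop}), not uniformly: a small value of $\rho_n^{av}$ permits the orbit of $y$ to differ from that of $x$ on a sparse set of times, so $\rho_n^{av}$-balls are far larger and more irregular than Bowen balls, and the crude packing bound falls short of $h(T)$ by a metric-dependent amount. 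The normalization $\phi(\eps)$ is introduced precisely to absorb this discrepancy, and proving that after the rescaling the inner limit still converges to $h(T)$ for \emph{every} admissible $\rho$ is the substantive content.

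The main obstacle is therefore the lower bound together with the metric-independence of the answer: one must control, uniformly in $n$, the measure of $\rho_n^{av}$-balls for a general (``soft'') admissible metric, where, unlike the max metric underlying Katok's entropy formula, a sparse set of large excursions is invisible, and one must show that the residual scale factor organizes into a single function $\phi(\eps)$ so that the double limit is genuinely $h(T)$ irrespective of $\rho$. A secondary point is the existence of the inner limit $\lim_n \Ent{\eps}{\rho_n^{av}}/n$: from $\rho_{n+m}^{av}=\tfrac{n}{n+m}\rho_n^{av}+\tfrac{m}{n+m}T^n\rho_m^{av}$ one obtains an almost-subadditivity $\Ent{\eps}{\rho_{n+m}^{av}}\le \Ent{\eps'}{\rho_n^{av}}+\Ent{\eps'}{\rho_m^{av}}$ with a controlled scale loss $\eps'<\eps$, so that a Fekete argument applies after sending $\eps'\to\eps$; the precompactness machinery of Chapter~1 (Theorems~\ref{kriko}, \ref{kriauto}) should regularize the $T$-orbit of $\rho$ and assist here. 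These interlocking difficulties are why the statement remains conjectural.
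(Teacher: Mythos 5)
The statement you were asked to prove is, in the paper, a \emph{conjecture}: the authors give no proof of it at all. Their only comment is that a weaker form (for generic admissible metrics) was stated in an earlier paper, and that ``it seems that using Shannon--McMillan--Breiman theorem it is possible to prove above conjecture.'' So there is no proof in the paper to compare your argument against; the honest comparison is between your plan and that one-sentence suggestion, and on that level you agree: SMB applied to finite partitions is the intended engine in both.

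Your proposal, however, is also not a proof, as you yourself concede in the final paragraph. The upper bound direction is plausible: dominating a bounded cut-off of $\rho$ by $\delta+R\,\rho_{\eta_\delta}$ via item~5 of Theorem~\ref{kritdop} and counting the typical cells of $\bigvee_{k=0}^{n-1}T^{-k}\eta_\delta$ does give $\limsup_n \Ent{\eps}{\rho_n^{av}}/n\le h(T,\eta_\delta)\le h(T)$ (for this inequality you do not even need the Hamming-ball covering refinement; you would need it, in rate--distortion form, to guess what $\phi$ should be). The genuine gap is exactly where you place it: the lower bound. For a general admissible metric one only has separation in measure (item~6 of Theorem~\ref{kritdop}), so a $\rho_n^{av}$-ball of radius $\eps$ can contain points whose partition names differ on a positive-density set of times, and neither the paper's machinery (Theorems~\ref{kriko}, \ref{kriauto} give precompactness, not growth rates) nor your sketch controls the measure of such balls from above by $2^{-(h(T)-o(1))\phi(\eps)n}$, let alone shows that the defect is absorbed by a single function $\phi(\eps)$ uniformly over all admissible $\rho$. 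A secondary slip: invoking Krieger's generator theorem requires ergodicity and finite entropy, neither of which is assumed; this is repairable by working with $h(T,\eta)$ along a refining sequence of partitions. The missing lower bound is not repairable by anything in the paper: it is precisely why the statement is stated as a conjecture, and your write-up is best read as a correct identification of the obstruction rather than as a proof.
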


In \cite{Mark} we formulate a weaker conjecture
that the equality is true for generic admissible metric.
But it seems that using Shannon-McMillan-Breiman
theorem it is possible to prove above conjecture.

As to zero entropy --- it is not yet known what intermediate --- between bounded
and linear --- growth the scaling sequences for
automorphisms can have.
Most probably, logarithmic growth with different bases can be
achieved (for
oricycles, adic transformations, etc.). For arbitrary
groups, the growth of scaling sequences lies between bounded growth and the growth
of the number of words of given length in the group. For the groups
$\sum_1^{\infty} {\Bbb Z}_p$, examples are already
found  in \cite{VGor, usp} where the scaling entropy
grows as an arbitrary integer power of the logarithm of the number of words
of given length. It is still plausible that the growth does not depend on the choice of
admissible metric.

However, recall that entropy characteristics are just the simplest
(``unary,'' or ``dimensional'') invariants of the dynamics of metrics.
There are other
asymptotic invariants of the sequence of average
metrics with respect to automorphism.

\end{document}